\documentclass[]{article}
\bibliographystyle{plain}
\usepackage{amsmath}
\usepackage{enumerate}
\usepackage{enumitem}
\usepackage{graphicx}
\usepackage{float}
\usepackage{hyperref}
\usepackage{verbatim}
\usepackage{amssymb}
\usepackage{color}
\usepackage{amsthm}
\usepackage{amssymb}
\usepackage{verbatim}
\usepackage{geometry}
\usepackage{comment}
\usepackage{mathrsfs}
\geometry{a4paper,scale=0.8}
\def\R{{\mathbb R}}
\def\N{{\mathbb N}}

\def\C{{\mathbb C}}
\def\T{{\mathbb{T}}}
\def\S{{\mathbb{S}}}
\def\ds{\displaystyle}
\newcommand{\norm}[1]{\left\Vert#1\right\Vert}
\newcommand{\BUC}{\mathrm{BUC}}

\renewcommand{\leq}{\leqslant}                 
\renewcommand{\geq}{\geqslant}
\renewcommand{\tilde}{\widetilde}

\renewcommand{\i}{\mathrm{\bf i}}                 
    
\newcommand{\enstq}[2]{\left\{#1~\middle|~#2\right\}}             

 \newtheorem{theorem}{Theorem}[section]
 \newtheorem{remark}[theorem]{Remark}

 \newtheorem{lemma}[theorem]{Lemma}
 \newtheorem{corollary}[theorem]{Corollary}

 \newtheorem{proposition}[theorem]{Proposition}
 \newtheorem{definition}[theorem]{Definition}
 \numberwithin{equation}{section}

\title{On the reachable space for parabolic equations\footnote{S. E. is partially supported by the ANR projects TRECOS ANR 20-CE40-0009,  NumOpTes ANR-22-CE46-0005, CHAT ANR-24-CE40-5470.}
}

\author{
Sylvain \textsc{Ervedoza}\footnote{Univ. Bordeaux, CNRS, Bordeaux INP, IMB, UMR 5251, F-33400 Talence, France,  
\texttt{sylvain.ervedoza@math.u-bordeaux.fr}}, 
\and
Adrien \textsc{Tendani-Soler}\footnote{Laboratoire Interdisciplinaire Carnot de Bourgogne,
CNRS UMR 6303, Université Bourgogne, BP 47870, 21078 Dijon, France,  
\texttt{adrien.tendani@u-bourgogne.fr}}
}

\begin{document}

\maketitle

\begin{abstract}
	In this article, we provide a description of the reachable space for the heat equation with various lower order terms, set in the euclidean ball of $\R^d$ centered at $0$ and of radius one and controlled from the whole external boundary. Namely, we consider the case of linear heat equations with lower order terms of order $0$ and $1$, and the case of a semilinear heat equations. 
	In the linear case, we prove that any function which can be extended as an holomorphic function in a set of the form $\Omega_\alpha = \enstq{ z\in\C^d}{ |\Re(z)| + \alpha |\Im(z)| < 1}$ for some $\alpha \in (0,1)$ and which admits a continuous extension up to $\overline\Omega_\alpha$ belongs to the reachable space. In the semilinear case, we prove a similar result for sufficiently small data. Our proofs are based on well-posedness results for the heat equation in a suitable space of holomorphic functions over $\Omega_\alpha$ for $\alpha > 1$.
\end{abstract}

\noindent\textbf{Keywords:} Control theory, Parabolic equations, Reachable space. 
\smallskip

\noindent\textbf{MSC 2020:} 35K05, 35K08, 93C05.
\setcounter{tocdepth}{1}
\tableofcontents

\section{Introduction}

The goal of this article is to give a description of the reachable space for heat equations with lower order terms when the equation is set in a ball of $\R^d$ and controlled from the whole external boundary. 

\subsection{Main results}

The geometrical setting is then the following one. We set $d \in \N$, $d\geq 1$,  $\Omega := B_{\R^d} (1)$, i.e. the euclidean ball of $\R^d$ centered at $0$ and of radius one.
\paragraph{Reachable space for the linear heat equation.}
We are first interested in the controllability properties of the following heat equation with lower order terms:
\begin{equation}
	\label{Controlled-Heat}
	\left\{
		\begin{array}{ll}
			\partial_t y - \Delta y + q y + W \cdot \nabla y= 0, \quad & \hbox{ in } (0,T) \times \Omega,
			\\ 
			y = u, \quad & \hbox{ on } (0,T) \times \partial\Omega, 
			\\
			y(0, \cdot ) = y_0, \quad & \hbox{ in } \Omega.
		\end{array}
	\right.
\end{equation}
In \eqref{Controlled-Heat}, $y$ is the state, $y_0$ is the initial datum, $q=q(t,x)$ is a potential of order $0$, $W=W(t,x)$ is a potential of order $1$, and $u$ is a control function. 

Our goal is to give a description of the reachable set $\mathscr{R}_{lin}(y_0, T)$, which is defined by 
\begin{equation}
	\label{Def-R-y0-T}
	\mathscr{R}_{lin}(y_0, T) := \{ y(T, \cdot), \ \text{ for $y$ solving \eqref{Controlled-Heat} starting from $y_0$ with control function } u \in L^2(0,T; L^2(\partial\Omega))\}, 
\end{equation}
that is the set of all states that can be reached by solutions of the system \eqref{Controlled-Heat} at time $T$ when starting from $y_0$ by choosing an appropriate control $u$ in $L^2(0,T; L^2(\partial\Omega))$. 

In order to state our main result, we first need to introduce some notation.

For any $\alpha >0 $, we set
\begin{equation}
	\label{Def-Omega-alpha}
	\Omega_\alpha := \{ a + \i b, \, \text{ with } a \in \R^d \text{ and } b \in \R^d \text{ satisfying } |a| + \alpha |b| < 1\}.
\end{equation}
We also introduce the space 
\begin{equation}
	\label{Def-R-alpha}
	\mathscr{R}_\alpha := \mathcal{C}(\overline{\Omega}_\alpha)\cap\mathrm{Hol}(\Omega_\alpha).
\end{equation}
which we endow with the norm 
\begin{equation}
	\| f\|_{\mathscr{R}_\alpha} := \| f \|_{L^\infty(\Omega_\alpha)}.
\end{equation}

Our main result is the following:
\begin{theorem}
	\label{Thm-Main-Linear}
	Let $d \geq 1$ and $\Omega = B_{\R^d} (1)$, and assume that for some $\alpha_0 \in (0,1)$,
	\begin{equation}
		\label{Assumptions-Linear-q-W}
		q \in L^\infty_{loc}(\R_+; \mathscr{R}_{\alpha_0}) 
		\quad 
		\hbox{ and } 
		\quad 
		W \in  L^\infty_{loc}(\R_+; (\mathscr{R}_{\alpha_0})^d).
	\end{equation}
	
	Then for any $y_0 \in L^2(\Omega)$
     and for any $T >0$, we have
	\begin{equation}
		\bigcup_{ \alpha \in (0,1)} \mathscr{R}_\alpha \subset \mathscr{R}_{lin}(y_0, T)\subset\mathrm{Hol}(\Omega_1).
	\end{equation}
	Moreover, for all $\alpha \in (0,1)$ and for all $T>0$, there exists a constant $C_\alpha(T)$ such that for any $y_1 \in \mathscr{R}_\alpha$, there exists a control $u \in L^\infty(0,T; L^\infty(\partial\Omega))$ such that the solution of \eqref{Controlled-Heat} with initial datum $y_0 = 0$ satisfies $y(T ) = {y_1}_{|_\Omega}$ and $\|u \|_{L^\infty(0,T; L^\infty(\partial\Omega))} \leq C_\alpha(T) \| y_1\|_{L^{\infty}(\Omega_\alpha)}$.
\end{theorem}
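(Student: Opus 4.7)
The theorem comprises three statements: the regularity bound $\mathscr{R}_{lin}(y_0,T)\subset\mathrm{Hol}(\Omega_1)$, the reachability inclusion $\bigcup_{\alpha\in(0,1)}\mathscr{R}_\alpha\subset\mathscr{R}_{lin}(y_0,T)$, and the quantitative control estimate. The plan is to prove the upper bound by an analytic-smoothing argument and to establish the lower bound together with the quantitative estimate constructively, relying on the well-posedness theory for the heat equation in $\mathscr{R}_\beta$ with $\beta>1$ announced in the abstract.

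For the upper bound, I would decompose any reachable state as $y(T) = e^{(T/2)A}y(T/2) + \int_{T/2}^T e^{(T-s)A} B u(s)\,ds$, where $A$ is the Dirichlet realisation of $-\Delta + q + W\cdot\nabla$ on $\Omega$ and $B$ is the boundary input operator. The semigroup part belongs to $\mathrm{Hol}(\Omega_1)$ via spectral decomposition on the ball: eigenfunctions are Bessel functions times spherical harmonics, whose entire extensions combine with the Gaussian eigenvalue decay to yield precisely the $\Omega_1$ geometry. The boundary integral is treated analogously through the complex extension of the heat Poisson kernel on the ball, and the lower-order terms $q,W \in L^\infty_{loc}(\R_+;\mathscr{R}_{\alpha_0})$ are absorbed by Duhamel.

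For the lower bound, classical null controllability of \eqref{Controlled-Heat} under \eqref{Assumptions-Linear-q-W} (via Carleman estimates, on a subinterval $(0,T/2)$) reduces the problem to $y_0 = 0$. Given $y_1\in\mathscr{R}_\alpha$ with $\alpha\in(0,1)$, my plan is: (i) construct a backward-in-time trajectory $\tilde y$ of the homogeneous heat equation starting from $y_1$ at $t=T$, defined on a shrinking family of complex domains $\Omega_{\beta(t)}$ with $\beta(T)=\alpha$ and $\beta(t)\to\infty$ as $t\to 0^+$, so that $\tilde y(0,\cdot)$ need only be defined on the real ball $\Omega$; (ii) arrange that $\tilde y(0,\cdot)=0$ on $\Omega$ --- if the direct backward construction does not produce a vanishing initial value, modulate by a smooth time cutoff $\chi$ equal to $0$ on $[0,T/3]$ and to $1$ on $[2T/3,T]$, then eliminate the resulting source $\chi'\tilde y$ (supported in the middle third of $(0,T)$, and holomorphic in a strictly thicker complex neighborhood of $\Omega$ than $\tilde y$ itself is at those times) by applying the inhomogeneous variant of the $\mathscr{R}_\beta$ well-posedness theory; (iii) read off the control as the boundary trace $u := \tilde y|_{(0,T)\times\partial\Omega}$, the estimate $\|u\|_{L^\infty(L^\infty)}\le C_\alpha(T)\|y_1\|_{L^\infty(\Omega_\alpha)}$ then following directly from the quantitative constants in the well-posedness theorem combined with the continuous embedding $\mathscr{R}_\beta\hookrightarrow L^\infty(\partial\Omega)$ for any $\beta>0$.

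The main obstacle is step (i): the backward heat equation on $\Omega$ is ill-posed, and well-posedness is recovered only by insisting on holomorphy in a sufficiently thick complex neighborhood. The regime $\beta>1$ is essential here because, as one retreats from $t=T$ toward $t=0$, the holomorphic domain must shrink past $\Omega_1$; the precise rate at which $\beta(t)$ must grow controls both the time over which $\tilde y$ exists and the size of the constant $C_\alpha(T)$, and matching this shrinkage to the fixed interval $[0,T]$ while keeping $\beta(0)$ finite enough to contain $\Omega$ is the delicate quantitative point of the argument.
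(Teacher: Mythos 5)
Your high-level skeleton (reduce to $y_0=0$ by classical null controllability; realize the target as the time-$T$ trace of a holomorphic solution; lean on the $\beta>1$ well-posedness theory) matches the paper, but both halves of your argument have genuine gaps.

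\textbf{Lower bound.} Step (ii) conflates well-posedness with controllability. If you set $\tilde y = \chi\,\bar y$ where $\bar y$ is the backward flow from $y_1$, then $\tilde y$ solves the heat equation with the interior source $\chi'\bar y$, which is supported in $(T/3,2T/3)\times\Omega$, i.e.\ \emph{inside} the domain. Solving the inhomogeneous problem forward from $0$ with source $-\chi'\bar y$ and adding the result to $\tilde y$ restores the homogeneous equation and the zero initial datum, but it changes the value at $t=T$: you reach $y_1+w(T)$, not $y_1$. There is no way to remove an interior source while pinning \emph{both} endpoints except by a control localized away from $\Omega$; extra holomorphic regularity of the source does not help. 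This is precisely why the paper devotes Section~4 to proving null-controllability of the heat equation in the spaces $X_\beta$, $\beta>1$, on $\R^d$ with control supported in $\R^d\setminus\overline B_{\R^d}(2)$ (Carleman estimates in $L^2$, then a parabolic bootstrap of the controlled trajectory up to $\BUC^1$, then transfer to $X_\beta$). The construction is then: Wick-rotate, setting $\tilde y_1(x)=\eta(x)\,y_1(\i x/\alpha_1)\in X_{1/\alpha_1}$ with $1/\alpha_1>1$, drive $\tilde y_1$ to zero in time $T_1=\alpha_1^2T$ by that exterior control, and pull back via $y(t,x)=\tilde y(T_1-\alpha_1^2t,-\alpha_1\i x)$. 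Note also that in this picture the holomorphic domain of $y(t,\cdot)$ is the \emph{fixed} set $\Omega_{\alpha_1}$ for all $t\in[0,T]$ (because the forward semigroup preserves $X_{1/\alpha_1}$), so the ``shrinking domains matched to $[0,T]$'' difficulty you flag in step (i) does not arise; the real difficulty you must supply is the controllability statement in $X_\beta$, which your proposal never invokes.

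\textbf{Upper bound.} The spectral/kernel route is not available here: with nonzero $q,W$ there is no explicit eigenbasis or Poisson kernel, and ``absorbed by Duhamel'' does not preserve the sharp $\Omega_1$ geometry. Even for $q=W=0$, the boundary integral $\int_{T/2}^Te^{(T-s)A}Bu(s)\,ds$ is the entire content of Strohmaier--Waters, since $T-s$ is arbitrarily small there; ``treated analogously'' is the whole theorem. The paper's proof is different and avoids all kernel computations: by exact controllability to trajectories, $y(T)$ is also the endpoint of a solution $z$ with $z(0)=0$; multiplying $z$ by a spatial cutoff $\chi$ equal to $1$ near $\overline{B_{\R^d}(r)}$ produces a solution of the heat equation on $\R^d$ with a source supported in an annulus away from $B_{\R^d}(r)$; the forward well-posedness and uniqueness in $X_{\alpha,r}$ for $\alpha>1$ then force $z(T)|_{B(r)}$ to extend holomorphically to $r\Omega_\alpha$, and letting $r\uparrow1$, $\alpha\downarrow1$ gives $\mathrm{Hol}(\Omega_1)$. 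You would need either to adopt this argument or to carry out the Strohmaier--Waters analysis with variable coefficients, which is not sketched in your proposal.
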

\paragraph{Reachable space for semilinear parabolic equations.}
Our second result focuses on the semilinear parabolic equations. More precisely, we consider the following control system
\begin{equation}
		\label{Controlled-Heat-Non-linear}
	\left\{
		\begin{array}{ll}
			\partial_t y - \Delta y + g(y, \nabla y)= 0, \quad & \hbox{ in } (0,T) \times \Omega,
			\\ 
			y = u, \quad & \hbox{ on } (0,T) \times \partial\Omega, 
			\\
			y(0, \cdot ) = y_0, \quad & \hbox{ in } \Omega.
		\end{array}
	\right.
	\end{equation}
where $g=g(t,x,s,s_d)$ is a nonlinear function and we aim to describe the reachable set $\mathscr{R}_{nonlin}(y_0, T)$ defined as follows
\begin{equation}
	\label{Def-R-y0-T-nonlinear}
	\mathscr{R}_{non lin}(y_0, T):=\{ y(T, \cdot), \ \text{for $y$ solving \eqref{Controlled-Heat-Non-linear} starting from $y_0$ with control function } u \in L^2(0,T; L^2(\partial\Omega))\}. 
\end{equation}
We also define, for any $\delta>0$ and $\alpha>0$, the set 
$$
\mathscr{R}_{\alpha}^{\delta}:=\enstq{f\in \mathcal{C}^1(\overline{\Omega}_\alpha)\cap\mathrm{Hol}(\Omega_\alpha)}{\|f\|_{W^{1,\infty}(\Omega_\alpha)}\leq\delta}.
$$
Let us now state the result concerning the reachable spaces for the semilinear heat equation. 
%
%
\begin{theorem}
	\label{Thm-Main-Semilinear}
Let $d\geq 1$, $T>0$ and $\Omega = B_{\R^d} (1)$. 

Assume that, for some $\alpha_0 \in (0,1)$ and $\varepsilon>0$, we have a semilinearity 
	$$
		g: (t,x, s, s_d) \in [0,T] \times \overline\Omega_{\alpha_0} \times \overline B_\C (\varepsilon) \times \overline B_{\C^d} (\varepsilon)\longmapsto g(t,x,s,s_d)\in\C, 
	$$
	such that 
	\begin{equation}
		\label{Intro-g-in-BUC}
		g \in L^\infty(0,T; \mathcal{C}(\overline\Omega_{\alpha_0}\times \overline B_\C (\varepsilon) \times  \overline B_{\C^d} (\varepsilon)))
	\end{equation}
	\begin{equation}
		\label{Intro-Holomorphic-Condition-g}
		\text{a.e. in } t\in\R_+, 
		\, 
		g(t, \cdot, \cdot,\cdot) \text{ is holomorphic in }\Omega_{\alpha_0} \times \overline B_\C(\varepsilon)\times \overline B_{\C^d}(\varepsilon), 
	\end{equation}
	\begin{equation}
		\label{Intro-g-Lipschitz-3}
		g \in L^\infty([0,T] \times \overline\Omega_{\alpha_0}; W^{3, \infty} (\overline B_\C (\varepsilon) \times  \overline B_{\C^d} (\varepsilon))).
	\end{equation}
	\begin{equation}
		\label{Intro-g-for-s=0}
g(\cdot,\cdot, 0_\C, 0_{\C^d}) = 0,
	\end{equation}
Then there exist $\delta_{0}>0$ and a family of positive real numbers $(\delta_\alpha)_{\alpha\in (0,1)}$ such that for any $y_0\in \mathcal{C}^{1}(\overline{\Omega})$ satisfying 
$$
\|y_0\|_{W^{1,\infty}(\Omega)}\leq \delta_{0},
$$
we have
$$
\bigcup_{ \alpha \in (0,1)} \mathscr{R}_{\alpha}^{\delta_{\alpha}} \subset \mathscr{R}_{nonlin}(y_0, T).
$$
In particular, for all $\alpha\in (0,1)$, there exist $C_\alpha$ such that for all $y_0\in\mathcal{C}^1(\overline{\Omega})$ satisfying $\|y_0\|_{W^{1,\infty}({\Omega})}\leq \delta_0$,  and $y_1\in \mathscr{R}_{\alpha}$ with $\|y_1\|_{W^{1,\infty}(\Omega_\alpha)}\leq \delta_\alpha$, there exists a control function $u\in L^{\infty}((0,T)\times\partial\Omega)$ such that the solution $y$ of \eqref{Controlled-Heat-Non-linear} with initial datum $y_0$ satisfies $y(T)={y_1}_{|_\Omega}$ and $\|u\|_{L^{\infty}((0,T)\times\Omega)}\leq C_{\alpha}(\|y_0 \|_{W^{1,\infty}(\Omega)}+\|y_1\|_{L^{\infty}(\Omega_\alpha)})$.
\end{theorem}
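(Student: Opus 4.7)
The plan is to split $[0,T]$ into two subintervals and handle each by a distinct strategy. On $[0, T/2]$ I would use the classical small-data null controllability for the semilinear heat equation to bring $y_0$ to $0$ in $\Omega$; on $[T/2, T]$, starting from zero, I would reach $y_1$ through a Banach fixed-point argument built on the linear result of Theorem~\ref{Thm-Main-Linear}, carried out in a space of functions holomorphic in $x$ on $\Omega_\alpha$.

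\textbf{Phase 1.} Since $g(\cdot,\cdot,0,0) = 0$ and $g$ is of class $W^{3,\infty}$ in $(s, s_d)$, the nonlinearity vanishes to second order near $0$. A classical Carleman plus fixed-point argument then provides, for $\|y_0\|_{W^{1,\infty}(\Omega)} \leq \delta_0$ small enough, a control $u_1 \in L^\infty((0,T/2)\times \partial\Omega)$ with $\|u_1\|_\infty \leq C \|y_0\|_{W^{1,\infty}}$ such that the semilinear trajectory starting from $y_0$ satisfies $y(T/2, \cdot) = 0$ in $\Omega$ and remains small in $W^{1,\infty}$.

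\textbf{Phase 2.} Fix $\alpha \in (0,1)$ and $y_1 \in \mathscr{R}_\alpha^{\delta_\alpha}$. Work on a small closed ball $\mathcal{B}_R$ of a Banach space $X$ of functions on $[T/2, T] \times \overline{\Omega}_\alpha$ that are continuous up to the closure, holomorphic in $x \in \Omega_\alpha$, and bounded in $L^\infty_t W^{1,\infty}_x$, chosen so that $(z, \nabla z)(t,x) \in \overline{B}_\C(\varepsilon) \times \overline{B}_{\C^d}(\varepsilon)$ for every $z \in \mathcal{B}_R$. For such $z$, set
\[
  A_z(t,x) := \int_0^1 \partial_s g(t, x, \tau z, \tau \nabla z)\, d\tau,
  \qquad
  B_z(t,x) := \int_0^1 \nabla_{\!s_d} g(t, x, \tau z, \tau \nabla z)\, d\tau,
\]
so that $A_z\, z + B_z \cdot \nabla z = g(t,x,z,\nabla z)$ by the fundamental theorem of calculus together with $g(\cdot,\cdot,0,0)=0$. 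The holomorphy of $g$ in $(x, s, s_d)$ transmits to $A_z, B_z$, which therefore lie in $L^\infty(T/2,T; \mathscr{R}_{\alpha_0})$ uniformly in $z \in \mathcal{B}_R$. Applying Theorem~\ref{Thm-Main-Linear} with $(q,W) = (A_z, B_z)$, initial datum $0$, and target $y_1$ yields a control $u_z$ with $\|u_z\|_\infty \leq C_\alpha \|y_1\|_{\mathscr{R}_\alpha}$ together with a holomorphic extension $Y_z \in X$ of the controlled trajectory. Setting $\Phi(z) := Y_z$ defines the fixed-point map.

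\textbf{Contraction and conclusion.} For $\delta_\alpha$ small enough, the bound $\|Y_z\|_X \leq C'_\alpha \|y_1\|_{\mathscr{R}_\alpha} \leq C'_\alpha \delta_\alpha \leq R$ makes $\Phi$ a self-map of $\mathcal{B}_R$. The Lipschitz estimates $\|A_{z_1} - A_{z_2}\|, \|B_{z_1} - B_{z_2}\| \leq C \|z_1 - z_2\|_X$, which use the $W^{3,\infty}$ regularity of $g$ in $(s,s_d)$, combined with the continuous dependence of the linear controlled trajectory on $(q, W)$, yield $\|\Phi(z_1) - \Phi(z_2)\|_X \leq C'' R \|z_1 - z_2\|_X$, a contraction for $R$ small. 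The fixed point $Y \in \mathcal{B}_R$ then satisfies $\partial_t Y - \Delta Y + g(t, x, Y, \nabla Y) = 0$ on $\Omega_\alpha$ and reaches $y_1$ at time $T$, and concatenation with the Phase~1 control produces the desired $u$ on $(0,T)\times\partial\Omega$ with the announced $L^\infty$ bound. The main obstacle is ensuring that the constant $C_\alpha(T)$ of Theorem~\ref{Thm-Main-Linear} and the induced control-to-trajectory map depend on $(A_z, B_z)$ only through their $L^\infty_t \mathscr{R}_{\alpha_0}$-norms and in a locally Lipschitz fashion; this will require revisiting the constructive proof of Theorem~\ref{Thm-Main-Linear} to track the dependence on the coefficients inside the holomorphic functional framework.
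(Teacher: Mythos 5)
Your overall architecture (steer $y_0$ to $0$ on $(0,T/2)$ by small-data semilinear null controllability, then reach $y_1$ from $0$ on $(T/2,T)$) matches the paper, and Phase~1 is essentially what the paper does via Remark~\ref{Rem-Control-Semilinear-in-BUC}. But your Phase~2 takes a genuinely different route from the paper, and as written it has a gap at its central step. You freeze the coefficients along the trajectory, writing $g(t,x,z,\nabla z)=A_z z + B_z\cdot\nabla z$, and you run a fixed point over the \emph{potentials} $(A_z,B_z)$, invoking Theorem~\ref{Thm-Main-Linear} as a black box for each $z$. The contraction estimate then requires that the selected controlled trajectory $Y_z$ depend Lipschitz-continuously on $(q,W)=(A_z,B_z)$ in the holomorphic norms. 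You flag this yourself as ``the main obstacle,'' but it is not a technicality to be revisited later: it is the heart of the argument. The control in Theorem~\ref{Thm-Main-Linear} is produced by a Carleman-based construction (Corollary~\ref{Cor-Est-Y-Carl-Norms} followed by a parabolic bootstrap and a transfer to the holomorphic spaces), and nothing in the paper --- nor in your proposal --- quantifies how that construction varies when the potentials vary. Establishing such Lipschitz dependence, in $L^\infty_t(\mathscr{R}_{\alpha_0})$ and with the right smallness factor, is a substantial piece of analysis that is simply missing. A second, related gap: Theorem~\ref{Thm-Main-Linear} as stated only bounds the trajectory in $L^\infty(\Omega_\alpha)$, whereas to define $A_z,B_z$ and to keep $(\,z,\nabla z)$ inside $\overline B_\C(\varepsilon)\times\overline B_{\C^d}(\varepsilon)$ you need $W^{1,\infty}$ control of the \emph{holomorphic extension} of the gradient; this requires the $X_\alpha^1$ framework of \eqref{Def-X-alpha-1} and Theorem~\ref{Thm-Well-Posedness-X-alpha-1}, not the linear theorem as stated.

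The paper's proof is built precisely to avoid the difficulty you ran into. It first Wick-rotates ($x\mapsto \i x/\alpha_1$, time reversal, cut-off), converting the reachability of $y_1$ from $0$ on $\Omega$ into a null-controllability problem for a transformed semilinear equation on $\R^d$ in the spaces $X_{1/\alpha_1}^1$ (Theorem~\ref{Thm-NC-linear-semilinear}). There, the nonlinearity is split as $g=-qs-W\cdot s_d+\tilde g$ with $q,W$ the derivatives of $g$ at $(s,s_d)=(0,0)$ --- i.e., the potentials are \emph{fixed once and for all} --- and the quadratically small remainder $\tilde g(\cdot,\cdot,\widehat y,\nabla\widehat y)$ is treated as a source term $f$ in the linear control problem of Theorem~\ref{Thm-NC-BUC}. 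Since the map $(y_0,f)\mapsto(y,h)$ is linear for fixed $(q,W)$, the contraction comes for free from the quadratic vanishing of $\tilde g$, with no need to differentiate the control construction with respect to the coefficients. If you want to salvage your route, the cleanest fix is to adopt this ``fixed linearization at zero plus quadratic source term'' structure rather than the trajectory-dependent potentials $(A_z,B_z)$.
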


\subsection{Related references and comparison to the existing results} 
The interest of describing the reachable set is clear in the context of control theory. Several results in the literature on control theory for parabolic equation can thus be interpreted in terms of this space. First, due to the strong regularization properties satisfied by the solutions of parabolic equations such as \eqref{Controlled-Heat}, at least when $q = 0$ and $W= 0$, it is known that the reachable space for \eqref{Controlled-Heat} cannot be the whole space $H^{-1}(\Omega)$ nor any reasonable Sobolev space of the form $W^{k,p}(\Omega)$. Despite this, approximate controllability holds, at least when $q$ and $W$ belong to $L^\infty((0,T) \times \Omega)$, see the theory developed in \cite{Fabre-Puel-Zuazua-Approx} when $W = 0$, and \cite{FursikovImanuvilov} for the proof of the corresponding unique continuation result. In other words, we know that $\mathscr{R}_{lin}(y_0, T)$ is a dense subset of $H^{-1}(\Omega)$.

Another line of works show that system \eqref{Controlled-Heat} is null-controllable, that is for any $y_0 \in H^{-1}(\Omega)$, there exists a control function $u \in L^2(0,T; L^2(\partial\Omega))$ such that the solution $y$ of \eqref{Controlled-Heat} satisfies $y(T) = 0$. For $q$ and $W$ in $L^\infty((0,T) \times \Omega)$, this is achieved in \cite{FursikovImanuvilov} using Carleman type estimates, see also \cite{Imanuvilov-Yamamoto-2001}. As a consequence of the linearity of \eqref{Controlled-Heat}, any trajectory of \eqref{Controlled-Heat} can be reached at time $T$. It then follows from \cite{Seidman} that the reachable space $\mathscr{R}_{lin}(y_0,T)$ in fact does not depend on the initial state $y_0$,  nor on the time horizon $T$ when $q$ and $W$ do not depend on time. 

Although important, these works do not precisely characterize the set $\mathscr{R}_{lin}(y_0,T)$.
\paragraph{In dimension $d=1$.} When $W = 0$, a first attempt was done in the $1$-d setting in the work \cite{FattoriniRussel71} describing the reachable space in terms of the coefficients in the expansion on the basis of eigenvectors of the Laplacian. Already there, see \cite[p.280]{FattoriniRussel71}, it was remarked that this earlier description implies that the subset of $\mathscr{R}_{lin}(0,T)$ obtained in \cite{FattoriniRussel71} corresponds to functions which have an holomorphic expansion in some part of the complex plane (namely a suitable strip in this case), see also \cite{ErvZuazuaARMA} for a similar result. 

This point of view was adopted only recently, starting from the work \cite{MartinRosierRouchon-2016}, which shows that, when $q = 0$ and $W = 0$ and in dimension $1$, the reachable space is sandwiched between the space of functions which are holomorphic in the square 
$$
	\Omega_1 = \enstq{ z\in\C}{|\Re(z)| +|\Im(z)| < 1 },
$$
and the space of functions which are holomorphic in some complex ball $B_\C(0,R)$ for $R > \exp(1/(2e))$  ($\simeq 1.2$).

Still in the case $q = 0$, $W=0$ and in $1$-d, it was then improved in a series of work \cite{Darde-Ervedoza-16,Hartmann-Kellay-Tucsnak,Kellay-Normand-Tucsnak,Orsoni-JFA-2021}, see also  \cite{Aikawa-Hayashi-Saitoh-90} for a precise description of the reachable space of the $1$-d heat equation on the half-line when controlled from the boundary), culminating with the result \cite{Hartmann-Orsoni-2021} which proves that the reachable space coincides with the Bergman space $A^2(\Omega_1)$, i.e. the functions in $L^2(\Omega_1)$ which are holomorphic in $\Omega_1$.

When considering non-trivial lower order terms in the $1$-d case, it has been shown in \cite{Hartman-Orsoni-2022-Hermite} that when $q(x) = x^2$ and $W (x) = 0$, the reachable space is sandwiched between the space of functions which are holomorphic in the square $S$ and the space of functions which are holomorphic in $(1+\varepsilon) \Omega_1$ for some $\varepsilon >0$. Another result was obtained in \cite{Ervedoza-LeBalch-Tucsnak} by a perturbative argument showing that, when $W = 0$, for potentials $q$ which have an holomorphic expansion in $\Omega_1$ which belongs to $L^\infty(\Omega_1)$ and whose $L^\infty(\Omega_1)$ norm is small enough, the reachable space of \eqref{Controlled-Heat} is not modified by the potential $q$, and coincides with $A^2(\Omega_1)$. 

Another approach has been developed in \cite{Laurent-Rosier} which allows to handle semi-linear lower order terms $f(x, y, \partial_x y) $ in \eqref{Controlled-Heat}, allowing to show that if the lower order terms are holomorphic in all three variables on the complex ball of radius $> 4 \exp( 1/(2e))$, then the functions which are holomorphic in the ball of radius {$B_\C(0,R)$} for some $R> 4 \exp( 1/(2e))$ and small enough belong to the reachable space $\mathscr{R}_{nonlin}(0,T)$. Theorem  \ref{Thm-Main-Semilinear} is thus more precise than the one obtained in \cite{Laurent-Rosier}. 
Also note that, in a recent work \cite{Exactcontrollabilityofanisotropic1Dpartialdifferentialequationsinspacesofanalyticfunctions}, the approach of \cite{Laurent-Rosier} has been developed for a much more general class of anisotropic semilinear $1$-d PDE in order to give a subspace of the reachable space in those cases.

\paragraph{In dimension $d \geq 2$.} The results are very seldom in dimension greater or equal than $2$. A description in terms of the coefficients in the expansion on the basis of eigenfunctions is given in \cite[Remark 6.1]{FernandezCaraZuazua1} when $q = 0$ and $W = 0$ based on Carleman estimates (and without any geometric condition on $\Omega$ and $\omega$), generalizing an earlier attempt appearing in \cite[Section 6]{FatRus}, but which seem far from being optimal. Recently, a significant step forward has been achieved in \cite{Strohmaier-Waters-2022} in our geometric setting when $q = 0 $ and $W = 0$, showing that the reachable space is sandwiched between the set of functions which are holomorphic in 
\begin{equation}
	\label{Def-Sd}
	{\Omega_1 = \enstq{ z\in\C^d}{|\Re(z)| +|\Im(z)| < 1 }}
\end{equation}
and the set of functions which are holomorphic in $(1+ \varepsilon) {\Omega_1}$ for some $\varepsilon >0$.  Theorem \ref{Thm-Main-Linear} and Theorem \ref{Thm-Main-Semilinear} thus generalize the approach of \cite{Strohmaier-Waters-2022} and extends it to the case of non-trivial linear and non-linear lower order terms. 
\bigskip

To sum up:
\begin{itemize}
	
	\item Theorem \ref{Thm-Main-Linear} is similar to the results in \cite{Darde-Ervedoza-16,Hartman-Orsoni-2022-Hermite} obtained in the $1$-d case for potentials $(q,W) = (0,0)$ and potentials $(q,W) = (x^2, 0)$ respectively, but obviously generalizes it to much more general potentials. 
	
	\item Theorem \ref{Thm-Main-Linear} is similar to the results in \cite{Strohmaier-Waters-2022} obtained for potentials $(q,W ) = (0,0)$. 
	
	\item Theorem \ref{Thm-Main-Semilinear} improves the results obtained in \cite{Laurent-Rosier} in the $1$-d case by allowing semilinearities depending on time and mostly by giving more precise conditions on the holomorphic expansion of the states in the reachable space. 
\end{itemize}

	Finally, let us also point out that the precise description of the reachable space for PDE is still a challenging issue from various viewpoints.
	
	In particular, we point out that the few existing results in dimension greater or equal than $2$ only consider the case in which the domain $\Omega$ is a ball controlled from the whole external boundary. It is thus an important challenge to understand the interplay of the geometry of the domains $\Omega$ and the control set with the reachable space. 
	
	Similarly, our results only concern heat-type parabolic equations, and it would be interesting to develop them for more involved parabolic equations, such as Stokes or Boussinesq equations, and more general PDE, as initiated by the work \cite{Exactcontrollabilityofanisotropic1Dpartialdifferentialequationsinspacesofanalyticfunctions}.

\subsection{Ideas and strategy of the proof of Theorems \ref{Thm-Main-Linear} and \ref{Thm-Main-Semilinear}}

Our strategy is based on the following facts.
For a linear control problem
$$
	y' = A y + B u, 
$$
in which $A$ generates a strongly continuous semigroup $\T = (\T_t)_{t \geq 0}$ on a Hilbert space $H$, and $B$ is an admissible control operator in $\mathscr{L}(U, \mathscr{D}(A^*)')$ for some Hilbert space $U$, and which is null-controllable in any positive time, the restriction of the semigroup $\T$ to the reachable space $\mathscr{R} $ (as said above, by \cite{Seidman}, we know that in this case the reachable does not depend on the initial state nor on the time horizon $T$) is a strongly continuous semigroup in $\mathscr{R}$,  see \cite{VanNeerven-2005,Ervedoza-LeBalch-Tucsnak}.  

Accordingly, a good candidate for a functional space $X$ to be the reachable space of the linear heat equation \eqref{Controlled-Heat} should be a functional space in which the heat semigroup is well-posed. Also, in view of the results in the literature, see the above review, this functional space $X$ should be a space of functions which extends in a holomorphic way in a domain similar to the set $\Omega_1$ in \eqref{Def-Sd}.

The main step in the proof of Theorem \ref{Thm-Main-Linear} is thus the study of the heat semigroup on a suitable space of holomorphic functions. To make it easier, we consider the heat equation on $\R^d$, and we introduce a space $\tilde X$ of holomorphic functions which satisfies the following properties: 
\begin{itemize}
	\item $\tilde X$ is a space of functions which are defined on $\R^d$ and whose restriction to $B_{\R^d}(1)$ admit an holomorphic expansion in a set of $\C^d$ close to the set $\Omega_1$ in \eqref{Def-Sd}; 
	\item the heat semigroup is an analytic semigroup on $\tilde X$; 
	\item $\tilde X$ is an algebra. 
\end{itemize}
We refer to Section \ref{Sec-Heat-WP} for the definition of such spaces $\tilde X$ and the proof of the above properties (in particular Definition \ref{Def-X-alpha} and Theorem \ref{Thm-Well-Posedness-X-alpha}). 

Once this is done, we develop in Section \ref{Sec-WP-Heat+lower-order} the usual machinery in the context of analytic semigroups to be able to solve the heat equation with linear and semilinear lower order terms in these suitable spaces $\tilde X$.

Section \ref{Sec-NC-in-X-alpha} then shows how one can use the usual null-controllability properties of parabolic equations in $L^2$ to prove null-controllability properties of parabolic equations in these spaces.

Finally, to get a description of the reachable space for the heat equation with linear and non-linear lower order terms, we use in Section \ref{Sec-Reachable-Heat} an idea of \cite{Strohmaier-Waters-2022} and simply use Wick's rotation (multiplication of an element of $\C^d$ by $\i$). Indeed, it is not difficult to check that the image of the spaces $\tilde X$ by this transformation belongs to the reachable space, up to a minor rescaling.

\bigskip
\noindent{\bf Acknowledgments.} The authors are indebted to El Maati Ouhabaz and Armand Koenig for stimulating discussions during the writing of this article.  
 
\section{Well-posedness of the heat equation in a space of holomorphic functions}\label{Sec-Heat-WP}

The goal of this section is to discuss the action of the heat semigroup $\mathbb{T} = (\mathbb{T}_t)_{t \geq 0}$ on $\R^d$ on a suitable space of holomorphic functions. 

To be more precise, we consider the heat equation on $\R^d$, given by 
\begin{equation}
	\label{Heat-Eq}
	\left\{
		\begin{array}{ll}
			\partial_t y - \Delta_x y = 0, \quad & \text{ in } (0, \infty) \times \R^d, 
			\\
			y(0, \cdot) = y_0, \quad & \text{ in } \R^d.
		\end{array}
	\right.
\end{equation}
Here, $y_0$ is the initial datum, defined on $\R^d$. 

It is well-known (see for instance \cite[Section 2.13]{Engel-Nagel}) that, if $y_0\in L^{2}(\R^d)$, then the solution $y$ of \eqref{Heat-Eq} is given by 
\begin{equation}
	\label{Heat-Semigroup}
	y(t,x) = (\mathbb{T}_t y_0)(x) := (G_d(t) \star y_0) (x), \quad t > 0, \, x \in \R^d, 	
\end{equation}
where $\star$ denotes the convolution in the space variable and $G_d$ is the heat kernel given by the following formula
\begin{equation}
\label{Heat-Kernel}
G_d(t,z) := \frac{1}{(4 \pi t)^{d/2}} \exp\left( - \frac{z^2}{4t} \right), \quad  t>0, z\in\C^d, 
\end{equation}
where for any $z=\!^t(z_1,\dots,z_d)\in \C^d$, we write $z^2$ to denote the quantities 
\begin{equation}
\label{eq:notation carre d un vecteur a coordonnes complex}
z^2:=\sum_{j=1}^{d}z_{j}^{2}.
\end{equation}
(Note that we defined the heat kernel for $t >0$ and $z \in \C^d$ for later use, but of course the formula \eqref{Heat-Semigroup} only involves the restriction of the heat kernel to $ x \in \R^d$.)

From the formula \eqref{Heat-Semigroup}, one can also easily check that the heat semigroup $\T=(\T_t)_{t>0}$ generates a $C_0$ semigroup on the Banach space of bounded uniformly continuous functions 
$$
	\BUC(\R^d) := \enstq{ f : \R^d \to \C}{ f \in L^\infty(\R^d) \text{ and } f \text{ is uniformly continuous}}, 
$$
endowed with the norm $\| \cdot \|_{L^\infty(\R^d)}$.
 
We will go further and check that the heat semigroup $\T$ also generates a $C_0$ semigroup on an appropriate space of holomorphic functions. 

To be more precise, with $\Omega_\alpha$ defined for $\alpha >0 $ in \eqref{Def-Omega-alpha}, we introduce the following variant of the space $\mathcal{C}(\overline{\Omega}_\alpha)\cap \mathrm{Hol}(\Omega_\alpha)$.
\begin{definition}
\label{Def-X-alpha}
Let $\alpha>0$ and $d \in \N$. We define the space $X_\alpha(\R^d)$ as the space of functions $f\in \mathrm{BUC}(\R^d)$ such that $f_{|_{B_{\R^d}(0,1)}}$ admits a continuous extension $f_e$ on $\overline{\Omega}_\alpha$ which is holomorphic on $\Omega_\alpha$, endowed with the norm
\begin{equation}
	\label{Def-Norm-X-alpha}
	\| f \|_{X_\alpha(\R^d)} := \| f \|_{L^\infty(\R^d)} + \| f_e \|_{L^\infty(\Omega_\alpha)}.
\end{equation}
To alleviate notation, we will simply denote $X_\alpha(\R^d)$ by $X_\alpha$ in the following, as $d$ will be clear from the context. 
\end{definition}

We then have the following result: 
\begin{theorem}\label{Thm-Well-Posedness-X-alpha}
	Let $d \in \N$ and $\alpha >1$. Then the heat semigroup $\T$ is an analytic semigroup on $X_\alpha$, and we have the following estimates: there exists $C>0$ such that for all $t > 0$ and for all $y_0 \in X_\alpha$, 
	\begin{equation}
		\label{Est-Well-Posedness-X-alpha}
		\| \T_t y_0 \|_{X_\alpha} 
		+ 
		\sqrt{t } \| \nabla_x (\T_t y_0) \|_{X_\alpha}
		\leq C \| y_0\|_{X_\alpha}.
	\end{equation}
\end{theorem}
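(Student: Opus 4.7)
The strategy is to use the explicit convolution formula \eqref{Heat-Semigroup} together with the complex-argument extension \eqref{Heat-Kernel} of the heat kernel, and to bound the resulting integral by combining a Gaussian tail bound with a contour deformation exploiting the holomorphic extension of $y_0$ on $\Omega_\alpha$. The $L^\infty(\R^d)$-parts of the $X_\alpha$-norms are standard: $\|\mathbb{T}_t y_0\|_{L^\infty(\R^d)} \leq \|y_0\|_{L^\infty(\R^d)}$ since $\|G_d(t,\cdot)\|_{L^1(\R^d)} = 1$, and $\sqrt{t}\,\|\nabla_x \mathbb{T}_t y_0\|_{L^\infty(\R^d)} \leq C\,\|y_0\|_{L^\infty(\R^d)}$ since $\|\nabla_x G_d(t,\cdot)\|_{L^1(\R^d)} = O(t^{-1/2})$. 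The heart of the proof is the analogous bound on $\Omega_\alpha$.

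The natural candidate for the holomorphic extension of $\mathbb{T}_t y_0$ is
\begin{equation*}
	(\mathbb{T}_t y_0)_e(z) := \int_{\R^d} G_d(t, z - \xi)\, y_0(\xi)\, d\xi, \qquad z \in \Omega_\alpha,
\end{equation*}
which is well-defined, holomorphic in $z$ by dominated convergence, and coincides with $\mathbb{T}_t y_0$ on $B_{\R^d}(1)$. The crude estimate $|(\mathbb{T}_t y_0)_e(z)| \leq e^{|b|^2/(4t)}\,\|y_0\|_{L^\infty(\R^d)}$ (for $z = a + ib$) blows up as $t \to 0^+$, so I split the integral into the pieces over $B_{\R^d}(1)$ and $\R^d \setminus B_{\R^d}(1)$. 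For the exterior piece, $|\xi| \geq 1$ and $z \in \Omega_\alpha$ give $|a - \xi| \geq 1 - |a| > \alpha |b|$, so a Gaussian estimate with a suitably chosen parameter $\lambda = 1/\alpha^2 \in (0,1)$ yields
\begin{equation*}
	\Bigl|\int_{|\xi| \geq 1} G_d(t, z - \xi)\, y_0(\xi)\, d\xi\Bigr| \leq C_{\alpha, d}\, e^{(|b|^2 - (1-|a|)^2/\alpha^2)/(4t)}\|y_0\|_{L^\infty(\R^d)} \leq C_{\alpha, d}\,\|y_0\|_{L^\infty(\R^d)},
\end{equation*}
uniformly in $t > 0$ and $z \in \Omega_\alpha$, because $\alpha > 1$ makes the exponent non-positive.

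For the interior piece, $y_0$ coincides with its holomorphic extension $y_{0,e} \in \mathrm{Hol}(\Omega_\alpha) \cap \mathcal{C}(\overline\Omega_\alpha)$, so I can deform the contour of integration from $B_{\R^d}(1) \subset \R^d$ to a $d$-chain $\Gamma_z \subset \overline\Omega_\alpha$ of the form $\xi \mapsto \xi + i \phi_z(\xi)$, with $\phi_z$ chosen so that $\phi_z(\xi) = b$ on a \emph{core} neighborhood of $\xi = a$, $\phi_z(\xi) = 0$ for $|\xi| \geq 1$, and $|\xi| + \alpha|\phi_z(\xi)| \leq 1$ throughout. The last condition keeps $\Gamma_z$ and the homotopy between $\Gamma_z$ and $B_{\R^d}(1)$ inside $\overline\Omega_\alpha$, so that Cauchy's theorem applies after a mild approximation argument to pass from $\Omega_\alpha$ to its closure. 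On the core, $z - \xi' = a - \xi \in \R^d$ and the Gaussian factor integrates to a uniform constant times $\|y_{0,e}\|_{L^\infty(\Omega_\alpha)}$. On the transition region, the constraint $|\xi|+\alpha|\phi_z(\xi)| \leq 1$ combined with $\alpha > 1$ produces a positive lower bound of the form $\Re((z - \xi')^2) \geq c_\alpha |\xi - a|^2$, which makes the transition contribution bounded by a Gaussian-type integral that is uniform in $t > 0$ and $z \in \Omega_\alpha$. The main obstacle I foresee is the explicit construction of $\phi_z$ and the verification of this lower bound uniformly across $\Omega_\alpha$, in particular near the boundary where the core shrinks.

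The gradient bound is obtained by applying the same contour argument to $\nabla_z G_d(t, z-\xi) = -(z-\xi)/(2t)\,G_d(t, z-\xi)$; the additional linear factor, paired with the Gaussian integration, produces the expected $t^{-1/2}$ scaling on both the core and the transition region. Finally, the analyticity of the semigroup is obtained by repeating the entire analysis with the real time $t$ replaced by a complex time $\tau$ ranging in a sector $|\arg\tau| < \theta$ for some $\theta \in (0, \pi/2)$: the heat kernel $G_d(\tau, \cdot)$ remains holomorphic in $\tau$, all of the above Gaussian estimates survive with constants depending on $\theta$, and the resulting uniform bound $\|\mathbb{T}_\tau y_0\|_{X_\alpha} \leq C_\theta \|y_0\|_{X_\alpha}$ provides the sectorial holomorphic extension of $(\mathbb{T}_t)_{t > 0}$ and hence the analyticity claim.
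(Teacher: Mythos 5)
Your estimate part is sound and takes a genuinely different route from the paper. For the holomorphic extension the paper first reduces to $B=|B|e_1$ by a real rotation, then slices the integral one variable at a time, applying a one-dimensional triangle-contour (Cauchy) argument in $x_1$ and handling $x'$ through a lengthy case analysis; you instead deform the full $d$-dimensional chain $B_{\R^d}(1)$ to a graph $\xi\mapsto\xi+\i\phi_z(\xi)$ in one stroke. The construction you flag as the main obstacle does go through: take $\phi_z(\xi)=b\,\min\{1,(1-|\xi|)_+/(1-|a|)\}$; then $|\xi|+\alpha|\phi_z(\xi)|\le 1$, $\phi_z(a)=b$, $\phi_z$ vanishes outside the unit ball, and $|b-\phi_z(\xi)|\le|\xi-a|/\alpha$ for every $\xi$, which gives exactly $\Re\bigl((z-\xi-\i\phi_z(\xi))^2\bigr)\ge(1-\alpha^{-2})|\xi-a|^2$ uniformly in $z\in\overline\Omega_\alpha$, while the Jacobian $\det(I+\i D\phi_z)$ stays bounded because $D\phi_z$ has rank one and norm at most $1/\alpha$. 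This is arguably cleaner and more uniform in $d$ than the paper's argument. Your complex-time route to analyticity also works, provided the sector half-angle $\theta$ is small enough that $\cos\theta\,(1-\alpha^{-2})>2\alpha^{-1}\sin\theta$ (since $|\Im((z-\xi')^2)|\le 2\alpha^{-1}|\xi-a|^2$ on your contour); the paper instead deduces analyticity from the bound $\|\Delta\T_t y_0\|_{X_\alpha}\le Ct^{-1}\|y_0\|_{X_\alpha}$, obtained by writing $\T_t=\T_{t/2}\T_{t/2}$ and commuting derivatives with the semigroup.

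The genuine gap is strong continuity at $t=0$: the claim that $\T$ is an analytic semigroup on $X_\alpha$ includes that it is a $C_0$-semigroup there, i.e.\ $\|\T_t y_0-y_0\|_{X_\alpha}\to0$ as $t\to0^+$, and nothing in your proposal addresses this. Uniform boundedness on a sector does not imply it, and convergence of the holomorphic extensions in $L^\infty(\Omega_\alpha)$ is not automatic from convergence on $\R^d$. The paper devotes a separate proposition to this point: it approximates $y_0$ by dilates $y_0(\cdot/\lambda)$ glued with a cut-off so that the approximant lies in the analogous space over the strictly larger domain $\lambda\Omega_\alpha$, and then uses a normal-family compactness argument to upgrade convergence of $\T_t z_{0,\lambda}$ in $L^\infty(B_{\R^d}(1))$ to convergence in $X_\alpha$. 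An argument of this type (or some other density argument in $X_\alpha$) is needed to complete your proof.
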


Theorem \ref{Thm-Well-Posedness-X-alpha} is proved in the next sections. To start with, we explain in Section \ref{Subsec-Est-WP-1-d} how to get estimates \eqref{Est-Well-Posedness-X-alpha} in the $1$-d setting. In Section \ref{Subsec-Est-WP-any-d}, we then deduce estimates \eqref{Est-Well-Posedness-X-alpha} in the general case. 

	Note that, in the above statement and in the rest of the article, the derivative operators $\partial_j$ ($j \in \{1, \cdots, d\}$) stands for the derivative with respect to $x_j$, also sometimes denoted by $\partial_{x_j}$. Accordingly, saying that $\partial_{x_j} y \in X_\alpha$ for $y \in X_\alpha$ whose extension in $\overline \Omega_\alpha$ is denoted by $y_e$, should be understood as follows: $y \in \mathcal{C}^1(\R^d)$, and the element $z = \partial_{x_j} y $ belongs to $\BUC(\R^d)$, and can be extended as a continuous function $z_e$ in $\overline \Omega_\alpha$, holomorphic in $\Omega_\alpha$. In particular,  this notation $(\partial_j y)_e$ does not stand for the usual complex derivative of $y_e$ on $\Omega_\alpha$.

\subsection{Proof of the estimates (\ref{Est-Well-Posedness-X-alpha}) in the $1$-d setting.}
\label{Subsec-Est-WP-1-d}

In this subsection, we set $d = 1$, and we simply write $G$ for the Gaussian kernel $G_1$ in space dimension $1$ (defined in \eqref{Heat-Semigroup}).

Our goal is to prove the following: 
\begin{theorem}\label{Thm-Estimates-X-alpha-d=1}
	Let $d =1$ and $\alpha >1$. Then there exists $C>0$ such that for all $t > 0$ and for all $y_0 \in X_\alpha$, 
	\begin{equation}
		\label{Est-Well-Posedness-X-alpha-d=1}
		\| \T_t y_0 \|_{X_\alpha} + \sqrt{t } \| \partial_x (\T_t y_0) \|_{X_\alpha} \leq C \| y_0\|_{X_\alpha}.
	\end{equation}
\end{theorem}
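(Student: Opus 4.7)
The plan is to define the analytic extension of $\T_t y_0$ on $\overline\Omega_\alpha$ by the explicit convolution formula
\begin{equation*}
(\T_t y_0)_e(z) := \int_\R G(t, z-x)\, y_0(x)\, dx, \quad z \in \C,
\end{equation*}
which is entire in $z$ and coincides with $\T_t y_0$ on $\R$. The $L^\infty(\R)$ bounds $\|\T_t y_0\|_{L^\infty(\R)} \leq \|y_0\|_{L^\infty(\R)}$ and $\sqrt t\,\|\partial_x \T_t y_0\|_{L^\infty(\R)} \leq \pi^{-1/2}\|y_0\|_{L^\infty(\R)}$ follow at once from Young's inequality via $\|G(t,\cdot)\|_{L^1(\R)} = 1$ and $\|\partial_x G(t,\cdot)\|_{L^1(\R)} = (\pi t)^{-1/2}$, so the whole task reduces to estimating $|(\T_t y_0)_e(z)|$ and $\sqrt t\,|(\partial_x \T_t y_0)_e(z)|$ uniformly over $z = a+ib \in \overline\Omega_\alpha$ and $t>0$.

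The difficulty is that the direct estimate of the convolution formula produces a spurious factor $e^{b^2/(4t)}$ which blows up as $t\to 0^+$, since $|G(t,z-x)| = (4\pi t)^{-1/2} e^{-((a-x)^2 - b^2)/(4t)}$. I would fix this by contour deformation: for $z\in\overline\Omega_\alpha$ with $b\neq 0$, I deform the segment of $\R$ lying in $[-1,1]$ to the polygonal path $\gamma_0: -1\to z\to 1$, which remains in $\overline\Omega_\alpha$ by convexity. Cauchy's theorem, applied to the integrand $\zeta\mapsto G(t,z-\zeta)y_{0,e}(\zeta)$ which is holomorphic on the triangle bounded by $[-1,1]$ and $\gamma_0$, yields
\begin{equation*}
(\T_t y_0)_e(z) = \int_{-\infty}^{-1}\! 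G(t,z-x)y_0(x)\,dx + \int_{\gamma_0}\! G(t,z-\zeta)y_{0,e}(\zeta)\,d\zeta + \int_{1}^{+\infty}\! G(t,z-x)y_0(x)\,dx.
\end{equation*}

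For the left tail, the substitution $u = a-x$ gives $\int_{-\infty}^{-1}|G(t,z-x)|\,dx = \tfrac12 e^{b^2/(4t)}\,\mathrm{erfc}\!\left((a+1)/\sqrt{4t}\right)$. Using the sharp inequality $\mathrm{erfc}(\xi)\leq e^{-\xi^2}$ together with the geometric bound $b^2 \leq (a+1)^2/\alpha^2 < (a+1)^2$, which follows from $|a|+\alpha|b|\leq 1$ precisely because $\alpha > 1$, this quantity is bounded by $\tfrac12 e^{-((a+1)^2-b^2)/(4t)}\leq \tfrac12$; the right tail is symmetric. On each segment of $\gamma_0$, parameterizing $\zeta = -1 + s(z+1)$ with $s\in [0,1]$ gives the crucial simplification $z-\zeta = (1-s)(z+1)$, so $\Re((z-\zeta)^2) = (1-s)^2((a+1)^2 - b^2)\geq 0$, and the one-dimensional Gaussian integral in $s$ evaluates to a quantity proportional to $|z+1|/\sqrt{(a+1)^2-b^2}$; the same geometric inequality bounds this by a constant depending only on $\alpha$.

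The gradient estimate follows from the same contour decomposition applied to $|\partial_x G(t,w)| = (|w|/(2t))|G(t,w)|$. On the tails, the extra factor $|z-x|/(2t)$ multiplied by $\sqrt t$ produces terms of the form $\sqrt t/\sqrt t\cdot e^{-c/t}$ and $(|b|/\sqrt t)e^{-c/t}$, both uniformly bounded after optimization in $t$; on $\gamma_0$ the integral $\int_0^1 (1-s)|z+1|^2 e^{-(1-s)^2((a+1)^2-b^2)/(4t)}/(2t\sqrt{4\pi t})\,ds$ evaluates explicitly, and the $\sqrt t$ prefactor cancels the singularity. The main obstacle throughout is the uniform control of the $e^{b^2/(4t)}$ factor on the semi-infinite real tails as $t\to 0^+$: it is resolved exactly by the strict inequality $b^2 < (a\pm 1)^2$ forced by the geometry of $\overline\Omega_\alpha$ when $\alpha > 1$, which is the single point where this hypothesis is indispensable.
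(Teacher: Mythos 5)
Your proposal is correct and follows essentially the same route as the paper: the same explicit convolution extension, the same split into the real tails $|x|\geq 1$ plus the middle segment $[-1,1]$, the same Cauchy deformation of $[-1,1]$ onto the triangle sides $-1\to z\to 1$, and the same key geometric inequality $\alpha|b|\leq 1-|a|\leq\min\{|a+1|,|a-1|\}$ to kill the $e^{b^2/(4t)}$ factor. The only (cosmetic) omission is the verification that $\T_t y_0$ is uniformly continuous on $\R$, which is needed for membership in $X_\alpha$ and follows at once from Young's inequality as in the $L^\infty$ bound.
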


\begin{proof}
	Let $y_0 \in X_\alpha$ and $t >0$.  In the proof given below, all the constants are independent of $y_0 \in X_\alpha$ and $t >0$.  
	\medskip

	\noindent{\bf Estimates on $\T_t y_0$ on $\R$.}
	To show that $\T_t y_0$ in $\BUC(\R)$, we simply write 
	\begin{equation}
		\label{Stab-L-infty}
		\| \T_t y_0 \|_{L^\infty(\R)} 
		\leq \| G(t)\|_{L^1(\R)} \| y_0\|_{L^\infty(\R)} 
		\leq \| y_0 \|_{L^\infty(\R)}, 
	\end{equation}
	and, for any $\delta \in \R$, 
	\begin{equation}
		\label{Stab-Modulus-Continuity}
		\| \T_t y_0(\cdot + \delta) - \T_t y_0 \|_{L^\infty(\R)} 
		\leq \| G(t)\|_{L^1(\R)} \| y_0(\cdot + \delta) - y_0 \|_{L^\infty(\R)} 
		\leq \| y_0(\cdot + \delta) - y_0 \|_{L^\infty(\R)}. 
	\end{equation}
	Similarly, using that $\| \partial_x G(t)\|_{L^1(\R)}\leq C/\sqrt{t}$ for some constant $C$ independent of $t>0$, we get $\partial_x \T_t y_0 \in \BUC(\R)$ and 
	\begin{equation}
		\label{Stab-W-1-infty}
		\| \partial_x \T_t y_0 \|_{L^\infty(\R)} 
		\leq \frac{C}{\sqrt{t}} \| y_0 \|_{L^\infty(\R)}.
	\medskip
	\end{equation}

	\noindent{\bf Estimates on $\T_t y_0$ in $\overline\Omega_\alpha$.}
	The delicate part to estimate is the one corresponding to the holomorphic extension in 
	$$
		\Omega_\alpha = \enstq{ a + \i b}{ a, b \in\R \text{ and } |a| + \alpha |b|  < 1}.
	$$
	To better understand what we do, let us write \eqref{Heat-Semigroup} explicitly:
	$$
		\T_t y_0(x) = \int_{\R} G(t,x-x_0) y_0(x_0) \,{\rm d} x_0 , \quad \text{ for } x  \in \R.
	$$
	Thus, since the Gaussian $G(t)$ is an holomorphic function, a natural way to get an holomorphic extension
of $\T_t y_0$ on $\Omega_\alpha$, is by setting 
	\begin{equation}
		\T_t y_0(a + \i b) = \int_{\R} G(t,(a+\i b)-x_0) y_0(x_0) \, {\rm d} x_0, \quad \text{ for } (a, b) \in \R^2\text{ such that }a+\i b\in\Omega_\alpha.
	\end{equation}
	In view of the assumption $y_0 \in X_\alpha$, it is convenient to split this formula: for $a + \i b \in \overline \Omega_\alpha$, 
	\begin{equation}
		\label{Decomp-T-t-y0}
		\T_t y_0(a + \i b) 
		= y_1(t,a+ \i b) + y_2 (t,a + \i b),
	\end{equation}
	where 
	\begin{align}
		& y_1(t,a+\i b) :=\int_{|x_0| >1 } G(t,(a+\i b)-x_0) y_0(x_0) \, {\rm d} x_0, 
		\\
		&y_2(t,a+\i b) :=\int_{|x_0| <1 } G(t,(a+\i b)-x_0) y_0(x_0) \, {\rm d} x_0.
	\end{align}
	Similarly, using $\partial_x G(t,x) = - x G(t,x)/(2t)$, the natural extension of $\partial_x \T_t y_0$ is given by 
	\begin{equation}
		(\partial_x \T_t y_0)(a + \i b) = - \frac{1}{2t} \int_{ \R} (a+\i b - x_0) G(t,(a+\i b)-x_0) y_0(x_0) \, {\rm d} x_0, \quad \text{ for } (a, b) \in \R^2, 
	\end{equation}
	that we decompose into 
	\begin{equation}
		\label{Decomp-partial-x-T-t-y0}
		(\partial_x \T_t y_0)(a + \i b) 
		= z_1(t,a+ \i b) + z_2 (t,a + \i b),
	\end{equation}
	where 
	\begin{align}
		& z_1(t,a+\i b) :=-\frac{1}{2t} \int_{|x_0| >1 }   (a+\i b - x_0) G(t,(a+\i b)-x_0) y_0(x_0) \, {\rm d} x_0, 
		\\
		&z_2(t,a+\i b) :=- \frac{1}{2t} \int_{|x_0| <1 }   (a+\i b - x_0) G(t,(a+\i b)-x_0) y_0(x_0) \, {\rm d} x_0.
	\end{align}
	We will thus estimate these functions separately, relying on the following lemmas (proved afterwards): 
	\begin{lemma}
		\label{Lem-Est-y-1-and-z-1}
		There exists a constant $C_\alpha$ such that for all $t>0$ and $y_0 \in X_\alpha$,
		\begin{equation}
			\label{Est-y-1-and-z-1}
			\| y_1(t,\cdot) \|_{L^{\infty}(\Omega_\alpha)} 
			+
			\sqrt{t} \| z_1(t,\cdot) \|_{L^{\infty}(\Omega_\alpha)} 
			\leq  
			C_\alpha \| y_0\|_{L^\infty} \leq C_\alpha \| y_0 \|_{X_\alpha}. 
		\end{equation}
	\end{lemma}

	\begin{lemma}
		\label{Lem-Est-y-2-and-z-2}
		There exists a constant $C_\alpha$ such that for all $t>0$ and $y_0 \in X_\alpha$, 
		\begin{equation}
			\label{Est-y-2-and-z-2}
			\| y_2(t,\cdot) \|_{L^{\infty}(\Omega_\alpha)} 
			+
			\sqrt{t} \| z_2(t,\cdot) \|_{L^{\infty}(\Omega_\alpha)} 
			\leq 
			C_\alpha \| y_0 \|_{X_\alpha}. 
		\end{equation}
	\end{lemma}

	\begin{proof}[Proof of Lemma \ref{Lem-Est-y-1-and-z-1}]
		For $a+ \i b \in \overline\Omega_\alpha$ and $x_0 \in \R$ with $|x_0| >1$, we write
		$$
			|G(t,a+ \i b -x _0)| = \frac{1}{\sqrt{4 \pi t} } \exp\left( - \frac{1}{4t} \Re ( ((a+ \i b) - x_0)^2) \right),
		$$
		and estimate $\Re ( ((a+ \i b) - x_0)^2)$ as follows:
		\begin{align*}
			- \Re ( ((a+ \i b) - x_0)^2) 
			& = - (a-x_0)^2 + b^2 
			\\
			& \leq - (|x_0|-|a|)^2 +\frac{1}{\alpha^2} (1 - |a|)^2
			\\
			& =  - (|x_0| - 1)^2 - 2 ( 1 - |a|)(|x_0| -1) - \left(1 - \frac{1}{\alpha^2}\right) (1 - |a|)^2
			\\
			& \leq  - (|x_0| - 1)^2. 
		\end{align*}
		Accordingly, for $a+ \i b \in \overline\Omega_\alpha$, 
		\begin{equation}
			\label{Est-y-1}
			|y_1(t, a + \i b ) | \leq \int_{|x_0|>1} \frac{1}{\sqrt{4 \pi t} } \exp\left( - \frac{1}{4t} (|x_0|-1)^2 \right)| y_0(x_0)| \, {\rm d} x_0 \leq  \| y_0 \|_{L^\infty(\R)}.
		\end{equation}
		Similarly, for $a+ \i b \in \overline\Omega_\alpha$, using that, for $x_0 \in \R\setminus [-1,1]$,
		$$
			|a + i b - x_0 | \leq |a-x_0| + |b| \leq ||x_0| - |a|| + \frac{1}{\alpha} |1 - |a|| \leq 2 | |x_0|- |a| |,
		$$
		we get
		\begin{align*}
			|z_1(t, a + \i b ) |& \leq \frac{1}{t} \int_{|x_0|>1} \frac{||x_0| - |a||}{\sqrt{4 \pi t} } \exp\left( - \frac{1}{4t} (|x_0|-|a|)^2+ \frac{1}{4t \alpha^2}(1 - |a|)^2 \right)| y_0(x_0)| \, {\rm d} x_0 \
			\\
			&
			\leq  \frac{2}{\sqrt{\pi t}} \exp\left( - \frac{1}{4t} (1-|a|)^2+ \frac{1}{4t \alpha^2}(1 - |a|)^2 \right) \| y_0 \|_{L^\infty(\R)}
			\\
			& \leq \frac{2}{\sqrt{\pi t}} \| y_0 \|_{L^\infty(\R)}.
		\end{align*}

		Together with estimate \eqref{Est-y-1}, this ends the proof of Lemma \ref{Lem-Est-y-1-and-z-1}.
	\end{proof}

	\begin{proof}[Proof of Lemma \ref{Lem-Est-y-2-and-z-2}]
		For $a + \i b \in \overline\Omega_\alpha$, we will rely on Cauchy's formula in the set delimited by $[-1,1]$ and 
		\begin{align*}
			& \Gamma_{\ell} := \enstq{ -(1-\tau) + \tau (a+ \i b)}{ \tau \in [0,1] },
			\\
			& \Gamma_{r} := \enstq{ (1- \tau) (a + \i b) + \tau }{ \tau \in [0,1] },
		\end{align*}
		(In the above formula, $\Gamma_{\ell}$ and $\Gamma_{r}$ respectively are the left and right sides of the triangle of basis $[-1,1]$ and of summit $a + \i b$.) 
		Note that $\Gamma_{\ell}$ and $\Gamma_{r}$ could be part of the boundary of $\overline\Omega_\alpha$, and we can still apply Cauchy's formula in this case since the function $y_{0,e}$ is holomorphic in $\Omega_\alpha$ and is continuous on $\overline\Omega_\alpha$.

		This gives, for $a+ \i b\in \overline\Omega_\alpha$, 
		\begin{multline*}
			y_2(t, a+ \i b) = 
			(a+1 + \i b) \int_{0}^1 G(t,(1- \tau)(a+1+\i b)) y_{0,e}( -(1-\tau) + \tau (a+ \i b)) \, {\rm d}\tau
			\\
			-
			(a-1 + \i b) \int_{0}^1 G(t,\tau(a - 1 + \i b)) y_{0,e}( (1- \tau) (a + \i b) + \tau) \, {\rm d}\tau.
		\end{multline*}
		We then use
		\begin{align*}
			& |G(t, (1- \tau)(a+1 + \i b))| = \frac{1}{\sqrt{4 \pi t}} \exp\left( - \frac{(1-\tau)^2}{4t} \Re((a+1+ \i b)^2)\right) = \frac{1}{\sqrt{4 \pi t}} \exp\left( - \frac{(1-\tau)^2}{4t} ((a+1)^2-  b^2)\right),
			\\
			& |G(t, \tau (a-1 + \i b))| = \frac{1}{\sqrt{4 \pi t}} \exp\left( - \frac{\tau^2}{4t} \Re((a-1+ \i b)^2)\right)= \frac{1}{\sqrt{4 \pi t}} \exp\left( - \frac{\tau^2}{4t} ((a-1)^2-  b^2)\right),
		\end{align*}
		so that 
		\begin{align*}
			&|a + 1+ \i b| \int_0^1 |G(t,(1- \tau)(a+1+\i b))| \, {\rm d}\tau
			 \leq 
			\frac{|a + 1 + \i b| }{\sqrt{ (a+ 1)^2 - b^2}} 
			\leq 
			\frac{ |a +1| \sqrt{1 + 1/\alpha^2}}{ |a +1| \sqrt{1 - 1/\alpha^2}} \leq \sqrt{\frac{\alpha^2 +1}{\alpha^2 -1}}, 
			\\
			&
			|a - 1+ \i b| \int_0^1 |G(t,\tau(a-1+\i b))| \, {\rm d}\tau  
			 \leq \frac{ |a -1 + \i b| }{\sqrt{(a-1)^2 - b^2}} 
			\leq 
			\frac{ |a -1| \sqrt{1 + 1/\alpha^2}}{ |a -1| \sqrt{1 - 1/\alpha^2}} \leq \sqrt{\frac{\alpha^2 +1}{\alpha^2 -1}}, 
		\end{align*}
		where we used that, for $a + \i b \in \overline\Omega_\alpha$, 
		\begin{equation}
			\label{Comparison-b-vs-a}
			|b| \leq \frac{1}{\alpha} (1 - |a|) \leq \frac{1}{\alpha} \min\{ |a+1 |, |a-1| \}. 
		\end{equation}
		We thus deduce that for $a + \i b \in \overline\Omega_\alpha$, 
		\begin{equation}
			\label{Est-y-2}
			|y_2(t, a+\i b)| \leq 2 \sqrt{\frac{\alpha^2 +1}{\alpha^2 -1}} \| y_{0,e} \|_{L^\infty(\overline\Omega_\alpha)}.
		\end{equation}

		It remains to prove the estimate on $z_2$. As above, we start with Cauchy's formula to get that for $a + \i b \in \overline{\Omega_\alpha}$, 
		\begin{multline*}
			z_2(t, a+ \i b) 
			= 
			- \frac{(a+1 + \i b)^2}{2t} \int_{0}^1(1- \tau) G(t,(1- \tau)(a+1+\i b)) y_{0,e}( -(1-\tau) + \tau (a+ \i b)) \, {\rm d}\tau
			\\
			+
			\frac{(a-1 + \i b)^2}{2t} \int_{0}^1 \tau G(t,\tau(a - 1 + \i b)) y_{0,e}( (1- \tau) (a + \i b) + \tau) \, {\rm d}\tau.
		\end{multline*}

		Arguing as before and using \eqref{Comparison-b-vs-a}, we get
		\begin{align*}
			&\frac{|a + 1+ \i b|^2}{2t} \int_0^1 (1- \tau) |G(t,(1- \tau)(a+1+\i b))| \, {\rm d}\tau
			 \leq 
			\frac{|a + 1 + \i b|^2 }{\sqrt{\pi t} ((a+1)^2 - b^2)} 
			\leq 	
			\frac{1}{\sqrt{\pi t}} \frac{\alpha^2 +1}{\alpha^2 -1}, 
			\\
			&
			\frac{|a - 1+ \i b|^2}{2t} \int_0^1 \tau |G(t,\tau(a-1+\i b))| \, {\rm d}\tau 
			 \leq \frac{ |a - 1 + \i b|^2 }{\sqrt{\pi t} ((a-1)^2 - b^2)} 
			\leq 
			\frac{1}{\sqrt{\pi t}} \frac{\alpha^2 +1}{\alpha^2 -1}, 
		\end{align*}
		which yields that, for all $a + \i b \in \overline\Omega_\alpha$, 
		$$
			|z_2(t, a+\i b)| \leq \frac{2}{\sqrt{\pi t}} \sqrt{\frac{\alpha^2 +1}{\alpha^2 -1}} \| y_{0,e} \|_{L^\infty(\Omega_\alpha)}.
		$$
		Together with estimate \eqref{Est-y-1}, this ends the proof of Lemma \ref{Lem-Est-y-2-and-z-2}.
	\end{proof}

	{\bf Conclusion.} The estimate \eqref{Est-Well-Posedness-X-alpha-d=1} in the $1$-d setting then follows immediately from the combination of \eqref{Stab-L-infty}, \eqref{Stab-W-1-infty}, and \eqref{Est-y-1-and-z-1}--\eqref{Est-y-2-and-z-2} (recall \eqref{Decomp-T-t-y0} and \eqref{Decomp-partial-x-T-t-y0}).
\end{proof}

\subsection{Proof of the estimates (\ref{Est-Well-Posedness-X-alpha}) in the general case.}\label{Subsec-Est-WP-any-d}
The goal of this section is to prove the following result
\begin{theorem}\label{Thm-Estimates-X-alpha-Gal-d}
	Let $d \in \N$ and $\alpha >1$. Then there exists $C>0$ such that for all $t > 0$ and for all $y_0 \in X_\alpha$, 
	\begin{equation}
		\label{Est-Well-Posedness-X-alpha-d}
		\| \T_t y_0 \|_{X_\alpha} + \sqrt{t } \| \nabla_x (\T_t y_0) \|_{X_\alpha}
		\leq C \| y_0\|_{X_\alpha}.
	\end{equation}
\end{theorem}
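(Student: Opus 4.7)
My plan is to follow the 1-d strategy of Theorem \ref{Thm-Estimates-X-alpha-d=1} step by step. The $\|\T_t y_0\|_{L^\infty(\R^d)}$ and $\sqrt{t}\|\nabla(\T_t y_0)\|_{L^\infty(\R^d)}$ estimates, together with the $\BUC$ regularity on $\R^d$, follow from Young's inequality applied to the convolution $\T_t y_0 = G_d(t)\star y_0$, using $\|G_d(t)\|_{L^1(\R^d)} = 1$ and $\|\nabla G_d(t)\|_{L^1(\R^d)} \leq C/\sqrt{t}$; these reproduce \eqref{Stab-L-infty}--\eqref{Stab-W-1-infty} verbatim, with absolute values replaced by Euclidean norms.

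For the holomorphic extension on $\overline\Omega_\alpha$, I write $\T_t y_0(a+\i b) = \int_{\R^d} G_d(t,(a+\i b)-x_0)y_0(x_0)\,{\rm d}x_0$ and split it as $y_1(t,a+\i b)+y_2(t,a+\i b)$ corresponding to the regions $|x_0|>1$ and $|x_0|\leq 1$, in direct analogy with \eqref{Decomp-T-t-y0}. The identity $\Re(((a+\i b)-x_0)^2) = |a-x_0|^2 - |b|^2$ still holds with Euclidean norms, and the triangle inequality $|a-x_0|\geq |x_0|-|a|$ together with $\alpha|b|\leq 1-|a|$ and $\alpha>1$ yields $|a-x_0|^2-|b|^2\geq(|x_0|-1)^2$ when $|x_0|>1$. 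Consequently, the estimates on $y_1$ and its gradient follow from the computation of Lemma \ref{Lem-Est-y-1-and-z-1} with no substantive change.

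The main obstacle is the analogue of Lemma \ref{Lem-Est-y-2-and-z-2}, which in 1-d relied on deforming the integral over $[-1,1]$ into a triangle with apex $a+\i b$. The natural $d$-dimensional substitute is to deform the integration over $B_{\R^d}(0,1)$ to the \emph{cone} $K_z := \{sz + (1-s)\omega \, : \, (s,\omega)\in [0,1]\times S^{d-1}\}$, which lies in $\overline\Omega_\alpha$ by convexity of $\overline\Omega_\alpha$ (viewed as the closed unit ball of a norm on $\R^{2d}$). Since $B_{\R^d}(0,1)$ and $K_z$ share the boundary $S^{d-1}$, and the top-degree form $G_d(t,z-\zeta)\,y_{0,e}(\zeta)\,{\rm d}\zeta_1\wedge\cdots\wedge{\rm d}\zeta_d$ is holomorphic on $\Omega_\alpha$ (hence automatically closed as a $(d,0)$-form), Stokes' theorem applied to a $(d+1)$-chain realizing the homotopy inside $\overline\Omega_\alpha$ yields $y_2(t,z) = \int_{K_z} G_d(t,z-\zeta)\,y_{0,e}(\zeta)\,{\rm d}\zeta$. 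On $K_z$ the substitution $\Phi(s,\omega)=sz+(1-s)\omega$ gives $z-\Phi(s,\omega)=(1-s)(z-\omega)$, so
\begin{equation*}
|G_d(t,(1-s)(z-\omega))| = \frac{1}{(4\pi t)^{d/2}}\exp\!\left(-\frac{(1-s)^2(|a-\omega|^2-|b|^2)}{4t}\right),
\end{equation*}
and the key geometric inequality $|a-\omega|^2-|b|^2 \geq (1-|a|)^2(1-1/\alpha^2)$ (which follows from $|a-\omega|\geq|\omega|-|a|=1-|a|$ and $|b|\leq(1-|a|)/\alpha$) provides the necessary Gaussian decay. The pullback of ${\rm d}\zeta_1\wedge\cdots\wedge{\rm d}\zeta_d$ to $K_z$ carries a Jacobian factor of order $(1-s)^{d-1}$, and the resulting $s$-integration is uniformly bounded in $t>0$, giving $|y_2(t,z)|\leq C_\alpha \|y_{0,e}\|_{L^\infty(\Omega_\alpha)}$. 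The gradient estimate uses $\nabla_\zeta G_d(t,\zeta) = -\zeta G_d(t,\zeta)/(2t)$: on $K_z$ one has $|z-\Phi(s,\omega)|=(1-s)|z-\omega|=O(1-s)$, and the extra $(1-s)/t$ combined with the Gaussian decay integrates to $O(1/\sqrt{t})$, mirroring the 1-d computation on $z_2$.

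The principal technical difficulty is making this multi-variable contour deformation rigorous. Unlike the 1-d Cauchy argument on a triangle, here one has to construct the $(d+1)$-chain implementing the homotopy between $B_{\R^d}(0,1)$ and $K_z$ inside $\overline\Omega_\alpha$, to justify that the holomorphic $d$-form may be transported through the homotopy (either via Stokes in $\C^d$ or, equivalently, by a sequence of 1-d Cauchy deformations along straight lines whose convex combinations stay in $\overline\Omega_\alpha$), and to carefully track the Jacobian of the cone parametrization so that the exponential decay survives the additional factors.
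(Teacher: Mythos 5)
Your route is genuinely different from the paper's: the paper first reduces to $B=|B|e_1$ by a rotation, then slices $\R^d=\R_{x_1}\times\R^{d-1}_{x'}$ and deforms the contour only in the single complexified variable $x_1$ (with a three-way case analysis on $x'$), whereas you propose one global $d$-dimensional Stokes deformation of $B_{\R^d}(0,1)$ onto the cone $K_z$. The deformation itself is legitimate ($\overline\Omega_\alpha$ is convex and a holomorphic $(d,0)$-form is closed), and the idea is attractive. However, the central estimate of your sketch does not close as stated. With the two ingredients you invoke --- the Jacobian factor $(1-s)^{d-1}$ and the decay $|a-\omega|^2-|b|^2\geq(1-|a|)^2(1-1/\alpha^2)$ --- the bound you actually obtain is
\begin{equation*}
 |y_2(t,z)|\;\lesssim\;\frac{\|y_{0,e}\|_{L^\infty(\Omega_\alpha)}}{t^{d/2}}\,\big|\S^{d-1}\big|\int_0^1 u^{d-1}e^{-c\,u^2(1-|a|)^2/(4t)}\,{\rm d}u\;\lesssim\;\min\Big(t^{-d/2},\,(1-|a|)^{-d}\Big),
\end{equation*}
which blows up as $a$ approaches the unit sphere. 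Even upgrading the exponent to $(1-1/\alpha^2)|a-\omega|^2$ (which is the correct thing to keep) only improves this to $\int_{\S^{d-1}}|a-\omega|^{-d}\,{\rm d}\sigma(\omega)\sim(1-|a|)^{-1}$ for $d\geq2$, still divergent. The missing ingredient is the \emph{angular} part of the pullback: on $K_z$ one has $d\zeta_1\wedge\cdots\wedge d\zeta_d=(1-s)^{d-1}(z\cdot\omega-1)\,{\rm d}s\wedge{\rm d}\sigma(\omega)$ up to sign, and the factor
\begin{equation*}
 1-z\cdot\omega=\tfrac12|a-\omega|^2+\tfrac12\big(1-|a|^2\big)-\i\,b\cdot\omega,\qquad |1-z\cdot\omega|=O\big(|a-\omega|^2+(1-|a|)\big),
\end{equation*}
is precisely what cancels the divergence of $\int_{\S^{d-1}}|a-\omega|^{-d}{\rm d}\sigma$ near $\omega=a/|a|$ (using also $1-|a|\leq|a-\omega|$ and $|b|\leq(1-|a|)/\alpha$). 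This is the $d$-dimensional analogue of the ratio $|a\pm1+\i b|/\sqrt{(a\pm1)^2-b^2}$ that saves the paper's $1$-d Lemma \ref{Lem-Est-y-2-and-z-2}; describing the Jacobian as merely ``of order $(1-s)^{d-1}$'' discards it, and with it the theorem.

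Two further points of the same nature. For $y_1$, transposing Lemma \ref{Lem-Est-y-1-and-z-1} ``with no substantive change'' means bounding the exponent by $-(|x_0|-1)^2/(4t)$, and then $\int_{|x_0|>1}t^{-d/2}e^{-(|x_0|-1)^2/(4t)}\,{\rm d}x_0\sim t^{-(d-1)/2}$ is unbounded for $d\geq2$; you must retain a fixed fraction of the full decay, e.g.\ $-\Re\big((z-x_0)^2\big)\leq-(1-\alpha^{-2})|a-x_0|^2$, to integrate over the exterior of the ball. For the gradient term $z_2$, the extra factor $(1-s)|z-\omega|/(2t)$ forces the same bookkeeping with one more power of $|a-\omega|$, and the target $O(t^{-1/2})$ is only reached after the $(z\cdot\omega-1)$ factor is taken into account. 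In short: the cone deformation is a viable and genuinely different strategy, but the quantitative heart of the proof --- the interplay between the angular Jacobian, the inequality $1-|a|\leq|a-\omega|$, and the Gaussian decay in $|a-\omega|$ --- is absent from your sketch, and the estimate as written is false near $\partial B_{\R^d}(0,1)$.
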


\begin{proof}
	As in \eqref{Stab-L-infty}--\eqref{Stab-W-1-infty}, using that the gaussian kernel $G_d$ satisfies, for some constant $C$, that  
	$$
		\forall t >0, 
		\quad 
		\| G_d(t) \|_{L^1(\R^d)} = 1, 
		\quad \text{ and } \quad 
		\| \nabla_x G_d(t) \|_{L^1(\R^d)} = \frac{C}{\sqrt{t}},
	$$
	we immediately deduce that there exists a constant $C>0$ such that for all $t>0$ and $y_0 \in \BUC(\R^d)$, $\mathbb{T}_t y_0 \in \BUC (\R^d)$ and $\nabla_x  \mathbb{T}_t y_0 \in (\BUC (\R^d))^d$ and 
	\begin{equation}
		\label{Est-WP-W-1-infty-d}
		\| \T_t y_0 \|_{L^\infty(\R^d)} +\sqrt{t} \| \nabla_x \T_t y_0\|_{L^\infty(\R^d)} \leq C \| y_0\|_{X_\alpha}.
	\end{equation}

	Let then $y_0 \in X_\alpha$, and let us focus on the estimates of $\T_t y_0$ in $\overline\Omega_\alpha$. 

{We use the generic notation $A+\i B\in\Omega_\alpha$ with $A$ and $B$ in $\R^d$ to denote the elements of $\Omega_\alpha$.}
As before, the natural holomorphic extension of $\T_t y_0$ is simply given by 
	\begin{equation}
		\T_t y_0(A + \i B) = \int_{ \R^d} G_d(t,(A+\i B)-x_0) y_0(x_0) \, {\rm d} x_0, \quad \text{ for } {A+iB \in \Omega_\alpha.} 
	\end{equation}
	Similarly, the natural holomorphic extension of $\nabla_x \T_t y_0$ is simply given by
	\begin{equation}
		\nabla_x (\T_t y_0)(A + \i B) = - \int_{\R^d} \frac{(A+\i B - x_0)}{2t} G_d(t,(A+\i B)-x_0) y_0(x_0) \, {\rm d} x_0, \quad \text{ for } {A+\i B \in \Omega_\alpha.} 
	\end{equation}
	%
	%
\textit{Invariance by real rotation.} Let us consider a rotation $\mathcal{R}_B$ of $\R^d$ satisfying
\begin{equation}
\label{eq:caracterisation de RB}
\mathcal{R}_B=|B|e_1, 
\end{equation}
where $e_1 =\, ^t\!(1, 0, \cdots, 0)$ is the first canonical vector of $\R^d$.  
Then, we have
\begin{equation}
\label{eq:changement de variable avec RB}
\mathbb{T}_ty_0(A+\i B)=[\mathbb{T}_t y_{0}^{B}](\mathcal{R}_BA+\i |B| e_1),
\text{ and }
\nabla_x (\T_t y_0)(A + \i B) = \nabla_x (\T_t y_0^B) (\mathcal{R}_BA+\i |B| e_1), 
\end{equation}
where $y_{0}^{B}:=y_{0}^{B}(\mathcal{R}_{B}^{-1}\cdot)$. Besides,  since $\mathcal{R}_B$ is a rotation of $\R^d$, it follow from the definitions of $\Omega_\alpha$ and $X_{\alpha}$ that
$$
\mathcal{R}_{B}^{-1}(\Omega_\alpha)=\Omega_\alpha\text{, }\  y_{0}^{B}\in X_\alpha\ \text{ and }\ \|y_{0}^{B}\|_{X_\alpha}=\|y_{0}\|_{X_\alpha}.
$$

To get estimates \eqref{Est-Well-Posedness-X-alpha-d}, it is then sufficient to prove that there exists a constant $C_\alpha > 0$ such that for all $y_0 \in X_\alpha$, for all $(A, B_1) \in \R^d \times \R$ satisfying
	$$
		\sqrt{A_1^2+ |A'|^2} + \alpha |B_1| \leq 1, 
	$$
	and for all $t >0$, 
\begin{equation}
\label{eq:goal 1}
|\mathbb{T}_t y_{0}(A+\i B_1 e_1)|\leq C_\alpha\|y_0\|_{X_\alpha},
\end{equation}
and 
\begin{equation}
\label{eq:goal 2}
\sqrt{t} |\nabla_x\mathbb{T}_t y_{0}(A+\i B_1 e_1 )|\leq C_\alpha\|y_0\|_{X_\alpha}.
\end{equation}

\textit{Estimates}. In the following, we write $x_0 \in \R^d$ as $x_0 = (x_{1}, x')$ for $x_1 \in \R$ and $x' \in \R^{d-1}$. This yields, for $t >0$ and $(A, B_1) \in \R^d \times \R$,
	\begin{equation*}
		\T_t y_{0}(A + \i B_1 e_1) 
		= 
		\int_{x' \in \R^{d-1}} G_{d-1}(t,A'-x') 
			y_1(t, A_1, B_1, x') 
		 {\rm d}x', 
\end{equation*}	
where
\begin{equation}
		y_1(t, A_1, B_1, x') 
		=
		\int_{x_1 \in \R} G_1(t, A_1 + \i B_1 - x_1) y_{0}(x_{1}, x') \, {\rm d}x_1,
		\quad \text{ for } x' \in \R^{d-1}, 
\end{equation}
	and 	
	\begin{multline*}
		\partial_{x_1} (\T_t y_{0})(A + \i B_1 e_1) 
		= 
		\int_{x' \in \R^{d-1}}  G_{d-1}(t,A'-x') 
			z_1 (t, A_1, B_1, x') 
		 {\rm d}x', 
		\\
		\text{ where } 
		z_1(t, A_1, B_1, x') 
		=
		-\int_{x_1 \in \R} \frac{(A_1+ \i B_1 - x_1)}{2t} G_1(t, A_1 + \i B_1 - x_1) y_{0}(x_{1}, x') \, {\rm d}x_1,
		\quad \text{ for } x' \in \R^{d-1}, 
	\end{multline*}
	while, for $j \in \{2, \cdots, d\}$,
	\begin{equation*}
		\partial_{x_j} (\T_t y_{0})(A + \i B_1 e_1) 
		= 
		- \int_{x' \in \R^{d-1}} \frac{(A_j-x_j)}{2t} G_{d-1}(t,A'-x') 
		y_1(t,A_1, B_1, x')		 {\rm d}x'. 
	\end{equation*}
	We shall thus estimate precisely $y_1$ and $z_1$ for $A+ \i B_1$ satisfying
	$$
		\sqrt{A_1^2+ |A'|^2} + \alpha |B_1| \leq 1.
	$$
	In order to do so, we shall remark that, since $y_{0} \in X_\alpha$, $y_{0}(\cdot, x')$ belongs to $\BUC(\R)$ and, if $|x'| < 1$, its restriction to $(-\sqrt{1 - |x'|^2}, \sqrt{1 - |x'|^2})$  admits an holomorphic expansion on 
	\begin{equation}
		\label{Def-Omega-Alpha-x'}
		\Omega_{\alpha,x'} := \enstq{ a_1+ \i b_1}{ a_1, b_1 \in \R^d \text{ satisfying } \sqrt{a_1^2 + |x'|^2} + \alpha |b_1| < 1}, 
	\end{equation}
	which is continuous in $\overline\Omega_{\alpha, x'}$.

	We then distinguish several cases to estimate $y_1$ and $z_1$:
	\begin{enumerate}

		\item When $|x' | \geq 1$; 

		\item When $|x'| < 1$
			\begin{enumerate}
				\item and $\sqrt{A_1^2 + |x'|^2} \geq 1$; 
				\item and $\sqrt{A_1^2 + |x'|^2} + \alpha |B_1| < 1$; 
				\item and $\sqrt{A_1^2 + |x'|^2} < 1$ and $\sqrt{A_1^2 + |x'|^2} + \alpha |B_1| \geq 1$.
		\end{enumerate}
	\end{enumerate}

	\noindent{\bf 1. The case $|x'| \geq 1$.} Then we write: 
	\begin{align*}
		|y_1(t,A_1, B_1, x') |
		&\leq \frac{1}{\sqrt{4 \pi t}} \int_{x_1 \in \R} \exp\left( \frac{1}{4t}(B_1^2 - (A_1 - x_1)^2 )\right) \, {\rm d}x_1 \, {\|y_{0} \|_{L^\infty(\R^d)}}
		\\
		&\leq\exp\left( \frac{B_1^2}{4t} \right) {\|y_{0} \|_{L^\infty(\R^d)}}. 
	\end{align*}
	Similarly, we get 
	\begin{equation*}
		|z_1(t,A_1, B_1, x') |
		\leq 
		\frac{C}{\sqrt{t}} \left(1+ \frac{|B_1|}{\sqrt{t}} \right)\exp\left( \frac{B_1^2}{4t} \right) {\|y_{0} \|_{L^\infty(\R^d)}}. 
	\end{equation*}
	Then, since $A' \in B_{\R^{d-1}}(1)$ and $|B_1 | \leq  (1 - |A'|)/\alpha$,
	\begin{align}
		& \int_{x' \in \R^{d-1}, \, |x'| \geq 1} G_{d-1}(t,A' - x')	|y_1(t,A_1, B_1, x') | \, {\rm d}x'
		\notag\\
		& \leq 
		\int_{x' \in \R^{d-1}, \, |x'| \geq 1} \frac{1}{(4 \pi t)^{(d-1)/2}} \exp\left( - \frac{|A' - x'|^2}{4t} \right)	|y_1(t,A_1, B_1, x') | \, {\rm d}x' 
		\notag\\
		&
		\leq C \left( 1 + \left(\frac{ 1 - |A'|}{\sqrt{t}}\right)^{d-2} \right)\exp\left( - \frac{(1- |A'|)^2}{4t} + \frac{B_1^2}{4t} \right) \|y_{0}\|_{L^\infty(\R^d)} 
		\notag\\
		& 
		\leq C \left( 1 + \left(\frac{ 1 - |A'|}{{\sqrt{t}}}\right)^{d-2} \right)\exp\left( - \left( 1 - \frac{1}{\alpha^2}\right)\frac{(1- |A'|)^2}{4{t}}  \right) \|y_{0}\|_{L^\infty(\R^d)}
		\notag\\
		& \leq C \| y_{0}\|_{L^\infty(\R^d)},
	\label{Est-Tt-y0-x2>1} 
	\end{align}
	where we used that 
	\begin{align*}
		& \int_{x' \in \R^{d-1}, \, |x'| \geq 1} \frac{1}{(4 \pi t)^{(d-1)/2}} \exp\left( - \frac{|A' - x'|^2}{4t} \right) \, {\rm d}x'
		\\
		&\leq 
		\int_{x' \in \R^{d-1}, \, |x'-A'| \geq 1- |A'|} \frac{1}{(4 \pi t)^{(d-1)/2}} \exp\left( - \frac{|A' - x'|^2}{4t} \right) \, {\rm d}x'
		\\
		& \leq |\S^{d-2}| \int_{r \geq \frac{1- |A'|}{\sqrt{t}}} \frac{1}{(4 \pi)^{(d-1)/2}} \exp\left( - \frac{r^2}{4} \right) \, r^{d-2} {\rm d}r. 
		\\
		&\leq C \left( 1 + \left(\frac{ 1 - |A'|}{\sqrt{t}}\right)^{d-2} \right)\exp\left( - \frac{(1- |A'|)^2}{4t} \right) ,
	\end{align*}
	for some constant $C$ independent of the parameters $A'$ and $t$.

	The above proof can be adapted in an easy manner to deal with the gradient terms: 
	\begin{align}
		& \int_{x' \in \R^{d-1}, \, |x'| \geq 1} G_{d-1}(t,A' - x') |z_1(t,A_1, B_1, x') | \, {\rm d}x'
		\leq \frac{C}{\sqrt{t}} \| y_0\|_{L^\infty(\R^d)},
		\label{Est-partial1-Tt-y0-x2>1} 
		\\
		& \int_{x' \in \R^{d-1}, \, |x'| \geq 1} \frac{|A' -x'|}{2t} G_{d-1}(t,A' - x')	|y_1(t,A_1, B_1, x') | \, {\rm d}x'
		\leq \frac{C}{\sqrt{t}} \| y_0\|_{L^\infty(\R^d)}.
		\label{Est-Nabla'-Tt-y0-x2>1} 
	\end{align}
	\bigskip

	\noindent{\bf 2. The case $|x'| < 1$.} We then write 
	\begin{align*}
		y_1(t, A_1, B_1, x') 
		=
		y_{1,1}(t, A_1, B_1, x')
		+ 
		y_{1,2}(t, A_1, B_1, x'),
		\\
		z_1(t, A_1, B_1, x') 
		=
		z_{1,1}(t, A_1, B_1, x')
		+ 
		z_{1,2}(t, A_1, B_1, x'),
	\end{align*}
	where 
	\begin{align*}
		y_{1,1}(t, A_1, B_1, x') & = \int_{x_1 \in \R, \, |x_1| \geq \sqrt{1-|x'|^2}} G_1(t, A_1 + \i B_1 - x_1) y_{0}(x_{1}, x') \, {\rm d}x_1, 
		\\
		y_{1,2}(t, A_1, B_1, x') & = \int_{x_1 \in \R, \, |x_1| < \sqrt{1-|x'|^2}} G_1(t, A_1 + \i B_1 - x_1) y_{0}(x_{1}, x') \, {\rm d}x_1, 
	\end{align*}
	and
	\begin{align*}
		z_{1,1}(t, A_1, B_1, x') & = - \int_{x_1 \in \R, \, |x_1| \geq \sqrt{1-|x'|^2}} \frac{(A_1+\i B_1 -x_1)}{2t} G_1(t, A_1 + \i B_1 - x_1) y_{0}(x_{1}, x') \, {\rm d}x_1, 
		\\
		z_{1,2}(t, A_1, B_1, x') & = \int_{x_1 \in \R, \, |x_1| < \sqrt{1-|x'|^2}} \frac{(A_1+\i B_1 -x_1)}{2t} G_1(t, A_1 + \i B_1 - x_1) y_{0}(x_{1}, x') \, {\rm d}x_1, 
	\end{align*}
	For $y_{1,1}$, proceeding as before, 
	\begin{align}
		& 
		\int_{x' \in \R^{d-1}, \, |x'| \leq 1} G_{d-1}(t,A' - x')	|y_{1,1}(t,A_1, B_1, x') | \, {\rm d}x'
		\notag\\
		&
		\leq \int_{x \in \R^d,\, |x| \geq 1} \frac{1}{(4 \pi t)^{d/2}} \exp\left( \frac{1}{4t}(B_1^2 - |x- A|^2 ) \right) \, {\rm d}x \| y_0 \|_{L^\infty(\R^d)}
		\notag\\ 
		& \leq C \left(1+ \left(\frac{1 - |A|}{\sqrt{t}}\right)^{d-1} \right)  \exp\left( \frac{1}{4t}(B_1^2 - (1- |A|)^2 ) \right)  \| y_0 \|_{L^\infty(\R^d)}
		\notag\\
		& \leq C \left(1+ \left(\frac{1 - |A|}{\sqrt{t}}\right)^{d-1} \right)  \exp\left( - \left(1 - \frac{1}{\alpha^2} \right)\frac{ (1- |A|)^2 }{4t}\right) \| y_0 \|_{L^\infty(\R^d)}
		\notag\\
		& \leq C \| y_0 \|_{L^\infty(\R^d)}.
		\label{Est-y11-x2<1}
	\end{align}	
	We get similarly that 
	\begin{align}
		& \int_{x' \in \R^{d-1}, \, |x'| \leq 1} \frac{|A' - x'|}{t} G_{d-1}(t,A' - x')	|y_{1,1}(t,A_1, B_1, x') | \, {\rm d}x'
		\notag\\
		&
		\leq \int_{x \in \R^d,\, |x| \geq 1} \frac{1}{(4 \pi t)^{d/2}} \frac{|A - x|}{t} \exp\left( \frac{1}{4t}(B_1^2 - |x- A|^2 ) \right) \, {\rm d}x \| y_0 \|_{L^\infty(\R^d)}
		\notag\\
		&
		\leq \int_{x \in \R^d,\, |x-A| \geq 1-|A|} \frac{1}{(4 \pi t)^{d/2}} \frac{|A - x|}{t} \exp\left( \frac{1}{4t}(B_1^2 - |x- A|^2 ) \right) \, {\rm d}x \| y_0 \|_{L^\infty(\R^d)}
		\notag\\
		& 
		\leq 
		\frac{C}{\sqrt{t}} \left(1+ \left(\frac{1 - |A|}{\sqrt{t}}\right)^{d} \right)  \exp\left( - \left(1 - \frac{1}{\alpha^2} \right)\frac{ (1- |A|)^2 }{4t}\right) \| y_0 \|_{L^\infty(\R^d)}
		\notag \\
		& \leq \frac{C}{\sqrt{t}} \| y_0\|_{L^\infty(\R^d)}, 
		\label{Est-nabla'-y-11}	
	\end{align}
        and, using that $|B_1| \leq (1 - |A|)/\alpha$, 
        \begin{align}
            	& \int_{x' \in \R^{d-1}, \, |x'| \leq 1} G_{d-1}(t,A' - x')	|z_{1,1}(t,A_1, B_1, x') | \, {\rm d}x'
            	\notag\\
            	&
            	\leq \int_{x \in \R^d,\, |x| \geq 1} \frac{1}{(4 \pi t)^{d/2}} \frac{|A_1 - x_1| + |B_1|}{t} \exp\left( \frac{1}{4t}(B_1^2 - |x- A|^2 ) \right) \, {\rm d}x \| y_0 \|_{L^\infty(\R^d)}
            	\notag\\
            	&
            	\leq \int_{x \in \R^d,\, |x-A| \geq 1-|A|} \frac{1}{(4 \pi t)^{d/2}} \frac{|A - x|+ |B_1|}{t} \exp\left( \frac{1}{4t}(B_1^2 - |x- A|^2 ) \right) \, {\rm d}x \| y_0 \|_{L^\infty(\R^d)}
            	\notag\\
            	& 
            	\leq 
            	\frac{C}{\sqrt{t}} \left(1+ \left(\frac{1 - |A|}{\sqrt{t}}\right)^{d} + \frac{|B_1|}{\sqrt{t}}\left(\frac{1 - |A|}{\sqrt{t}}\right)^{d-1}  \right)  \exp\left( - \left(1 - \frac{1}{\alpha^2} \right)\frac{ (1- |A|)^2 }{4t}\right) \| y_0 \|_{L^\infty(\R^d)}
            	\notag \\
            	& \leq \frac{C}{\sqrt{t}} \| y_0\|_{L^\infty(\R^d)}.
            	\label{Est-z-11} 	
        \end{align}
        
        It thus remains to study $y_{1,2}$ (and the corresponding term $z_{1,2}$). This will lead to the cases mentioned above depending on the range of $\sqrt{A_1^1 + |x'|^2}$ under consideration. 
        \medskip
        
        \noindent{\bf 2.a. The case $|x'|< 1$ and $\sqrt{A_1^2 + |x'|^2} \geq 1$, i.e. $1 - A_1^2 \leq |x'|^2 \leq 1$.} In this case, $A_1^2 \geq 1 - |x'|^2$. We thus have 
        \begin{align*}
	        	|y_{1,2}(t,A_1, B_1, x') |
        		&\leq \frac{1}{\sqrt{4 \pi t}} \int_{|x_1| \leq \sqrt{1-|x'|^2}} \exp\left( \frac{1}{4t}(B_1^2 - (A_1 - x_1)^2 )\right) \, {\rm d}x_1 \|y_{0}(\cdot, x')\|_{L^\infty(\R)}
        		\\
	        	&\leq\exp\left( \frac{B_1^2}{4t} - \frac{(|A_1|- \sqrt{1-|x'|^2})^2}{4t} \right)\|y_{0}(\cdot, x')\|_{L^\infty(\R)},  
        \end{align*}
        and, similarly, 
        $$
	        	|z_{1,2}(t,A_1, B_1,x')| 
        		\leq 
        		\frac{C}{\sqrt{t}} \left( 1 + \frac{|B_1|}{\sqrt{t}} \right)\exp\left( \frac{B_1^2}{4t} - \frac{(|A_1|- \sqrt{1-|x'|^2})^2}{4t} \right)\|y_{0}(\cdot, x')\|_{L^\infty(\R)}.
        $$
        Then, 
        \begin{align*}
	        	&\int_{x' \in \R^{d-1}, \, |x'| \in (\sqrt{1-|A_1|^2},1)} G_{d-1}(t,A' - x')	|y_{1,2}(t,A_1, B_1, x') | \, {\rm d}x'
        		\\
	        	& \leq 
        		\int_{x' \in \R^{d-1}, \, |x'| \in (\sqrt{1-|A_1|^2},1)} \frac{1}{(4 \pi t)^{(d-1)/2}} \exp\left( - \frac{|A' - x'|^2}{4t} + \frac{B_1^2}{4t} - \frac{(|A_1|- \sqrt{1-|x'|^2})^2}{4t} \right) \, {\rm d}x'	
	        	\| y_0\|_{L^\infty(\R^d)}
        		\\
	        	& \leq
        		\exp\left( \frac{B_1^2}{4t}  \right) \int_{x' \in \R^{d-1}, \, |x'| \in (\sqrt{1-|A_1|^2},1)}\frac{1}{(4 \pi t)^{(d-1)/2}} \exp\left( - \frac{|A' - x'|^2}{4t} - \frac{(|A_1|- \sqrt{1-|x'|^2})^2}{4t} \right) \, {\rm d}x'	\| y_0\|_{L^\infty(\R^d)}.
        \end{align*}
        We then look at the function 
        $$
	        	f(x') = |A' - x'|^2 + (|A_1| - \sqrt{1-|x'|^2})^2, 
        $$
        defined for $x' \in B_{\R^{d-1}}(1)$. Since $x' \in B_{\R^{d-1}}(1) \mapsto (x',  \sqrt{1 - |x'|^2})$ is a parametrization of the half sphere $\S^d_+$, we immediately get that 
        $$
        		\inf_{B_{\R^{d-1}}(1)} f = (1 - |A|)^2, 
        $$
         while 
        $$
	        	\forall x' \in B_{\R^{d-1}}(1), \qquad 	D^2 f(x') \geq \frac{2 |A_1|}{\sqrt{1 - |x'|^2}}. 
        $$
        Accordingly, 
        $$
        		\forall x' \in B_{\R^{d-1}}(1) \text{ with } |x'| \geq \sqrt{1-A_1^2}, \qquad
        	D^2 f(x') \geq 2, 
        $$
        and thus, setting 
        $$
        	x'_{*} \in \text{Argmin\,}\enstq{f (x')}{ x' \in B_{\R^{d-1}}(1)\setminus B_{\R^{d-1}}\left(\sqrt{1-A_1^2}\right)},	
        $$ 
        we get
        $$
        		\forall x' \in B_{\R^{d-1}}(1) \text{ with } |x'| \geq \sqrt{1-A_1^2}, 
        		\qquad 
        		f(x') \geq f(x'_{*}) + |x'-x'_{*}|^2 \geq (1 - |A|)^2 + |x'- x'_{*}|^2.
        $$
        It follows that 
        \begin{multline*}
	        	\exp\left( \frac{B_1^2}{4t}  \right) \int_{x' \in \R^{d-1}, \, |x'| \in (\sqrt{1-|A_1|^2},1)} \frac{1}{(4 \pi t)^{(d-1)/2}} \exp\left( - \frac{|A' - x'|^2}{4t} - \frac{(|A_1|- \sqrt{1-|x'|^2})^2}{4t} \right) \, {\rm d}x'
        		\\
	        	\leq 
        		C \exp\left( \frac{1}{4t} ( B_1^2 - (1- |A|)^2)  \right) 
	        	\leq 
        		C \exp\left( - \left(1 - \frac{1}{\alpha^2} \right)\frac{1}{4t} (1- |A|)^2  \right),  
        \end{multline*}
        and, consequently,
        \begin{equation}
        		\label{Est-y12-x2>sqrt(1-a_1^2)}
        		\int_{x' \in \R^{d-1}, \, |x'| \in (\sqrt{1-|A_1|^2},1)} G_{d-1}(t,A' - x')	|y_{1,2}(t,A_1, B_1, x') | \, {\rm d}x'
        		\leq C \| y_0\|_{L^\infty(\R^d)}.
        \end{equation}
        In fact, we can go further and show that
        $$
	        	x'_* = \frac{A'}{|A'|} \sqrt{1 - A_1^2}, \quad \text{ and } f(x'_*) = \left(\sqrt{1 - A_1^2} - |A'|\right)^2.
        $$
        Using $f(x_*') \geq (1 - |A|)^2$, we thus deduce 
        \begin{align*}
        		& \int_{x' \in \R^{d-1}, \, |x'| \in (\sqrt{1-|A_1|^2},1)} \frac{1}{(4 \pi t)^{(d-1)/2}} \frac{|x' - A'|}{2t} \exp\left( - \frac{|A' - x'|^2}{4t} - \frac{(|A_1|- \sqrt{1-|x'|^2})^2}{4t} \right) \, {\rm d}x'
            	\\
            	& \leq 
            	\int_{x' \in \R^{d-1}, \, |x'| \in (\sqrt{1-|A_1|^2},1)} \frac{1}{(4 \pi t)^{(d-1)/2}} \frac{|x' - x_*'|}{2t} \exp\left( - \frac{1}{4t} (f(x_*') + |x' -x'_*|^2 )\right) \, {\rm d}x'
            	\\
            	& \quad +
            	\int_{x' \in \R^{d-1}, \, |x'| \in (\sqrt{1-|A_1|^2},1)} \frac{1}{(4 \pi t)^{(d-1)/2}} \frac{|x'_* - A'|}{2t} \exp\left( - \frac{1}{4t} (f(x_*') + |x' -x'_*|^2 )\right) \, {\rm d}x'
            	\\
            	&\leq 
            	\frac{C }{\sqrt{t}} \left( 1+\sqrt{ \frac{f(x_*')}{t}}\right) \exp \left( - \frac{1}{4t} f(x_*') \right)
            	\leq 
            	\frac{C }{\sqrt{t}} \exp \left( - \frac{1}{4t} \frac{f(x_*')}{\alpha} \right)
            	\leq 
            	\frac{C }{\sqrt{t}} \exp \left( - \frac{1}{4t \alpha} (1 - |A|)^2 \right).
        \end{align*}
	for some constant $C$ independent of the parameter $A_1$, $A'$, $B_1$ and $t$. As a consequence, using again $B_1^2 \leq (1- |A|)^2/\alpha^2$, we deduce
        \begin{multline}
            	 \int_{x' \in \R^{d-1}, \, |x'| \in (\sqrt{1-|A_1|^2},1)} \frac{|A'-x'|}{2t} G_{d-1}(t,A' - x')	|y_{1,2}(t,A_1, B_1, x') | \, {\rm d}x'
            	\\
            	\leq \frac{C}{\sqrt{t}} \exp\left( - \frac{1}{4t} \left(\frac{1}{\alpha} - \frac{1}{\alpha^2} \right) (1 - |A|)^2 \right) \| y_0\|_{L^\infty(\R^d)}
            	\leq \frac{C}{\sqrt{t}}  \| y_0\|_{L^\infty(\R^d)},
            	\label{Est-nabla-x'-y12-x2>sqrt(1-a_1^2)}
        \end{multline}
	{for some constant $C$ independent of the parameter $A_1$, $A'$, $B_1$ and $t$.} Using similar computations as the ones to obtain \eqref{Est-y12-x2>sqrt(1-a_1^2)} and the fact that $|B_1| \leq (1 - |A|)/\alpha$, we get 
        \begin{equation}
	        	\label{Est-z12-x2>sqrt(1-a_1^2)}
        		\int_{x' \in \R^{d-1}, \, |x'| \in (\sqrt{1-|A_1|^2},1)} G_{d-1}(t,A' - x')	|z_{1,2}(t,A_1, B_1, x') | \, {\rm d}x'
	        	\leq \frac{C}{\sqrt{t}} \| y_0\|_{L^\infty(\R^d)},
        \end{equation}
	{for some constant $C$ independent of the parameter $A_1$, $A'$, $B_1$ and $t$.}       
        \medskip
        
        \noindent{\bf 2.b. The case $|x'| < 1$ and $\sqrt{A_1^2 + |x'|^2} + \alpha |B_1| < 1$, i.e. $|x'|^2 < (1 - \alpha |B_1|)^2 - A_1^2$.} In this case, $A_1 + \i B_1$ belongs to the set $\Omega_{\alpha,x'}$ in \eqref{Def-Omega-Alpha-x'}. It is thus very natural to do as in $1$-d and to modify the contour $[-\sqrt{1 - |x'|^2}, \sqrt{1 - |x'|^2}]$ into the union of the two contours 
        \begin{align*}
	        	\Gamma_{\ell} & := \enstq{ - (1- \tau) \sqrt{1-|x'|^2} + \tau (A_1 + \i B_1)}{ \tau \in [0,1]}, 
        		\\
	        	\Gamma_{r} & := \enstq{  (1- \tau) (A_1 + \i B_1)+ \tau \sqrt{1-|x'|^2}}{ \tau \in [0,1]}.  
        \end{align*}
        It can be easily checked that the triangle delimited by $\Gamma_{ \ell} \cup \Gamma_{r } \cup [-\sqrt{1 - |x'|^2}, \sqrt{1 - |x'|^2}]$ belongs to $\overline\Omega_{\alpha, x'}$ when $\sqrt{A_1^2 + |x'|^2} + \alpha |B_1| < 1$, so that we can use Cauchy's formula (in $x_1$) to modify the contour $[-\sqrt{1 - |x'|^2}, \sqrt{1 - |x'|^2}]$ into $\Gamma_{\ell} \cup \Gamma_{r}$.
        We thus get 
        \begin{multline*}
        		y_{1,2}(t, A_1, B_1,x') =
	        	\\
        		\frac{(A_1+ \sqrt{1 -|x'|^2} + \i B_1)}{\sqrt{4 \pi t}} \int_{0}^1 \exp\left( - \frac{(1-\tau)^2}{4t} (A_1 + \sqrt{1 - |x'|^2} + \i B_1)^2 \right) y_{0,e}( - (1- \tau) \sqrt{1-|x'|^2} + \tau (A_1 + \i B_1), x') \, {\rm d}\tau
	        	\\
        		- 
	        	\frac{(A_1- \sqrt{1 -|x'|^2} + \i B_1)}{\sqrt{4 \pi t}} \int_{0}^1 \exp\left( - \frac{\tau^2}{4t} (A_1 - \sqrt{1 - |x'|^2} + \i B_1)^2 \right) y_{0,e}( - (1- \tau) \sqrt{1-|x'|^2} + \tau (A_1 + \i B_1), x') \, {\rm d}\tau.
        \end{multline*}
        As done in the $1$-d case, we then check that 
        \begin{align*}
        		&
	        	\frac{|A_1+ \sqrt{1 -|x'|^2} + \i B_1|}{t} \int_{0}^{1} \exp\left( - \frac{(1-\tau)^2}{4t} ( (A_1 + \sqrt{1 - |x'|^2})^2- B_1^2 )\right) \, {\rm d}\tau \leq C,
        		\\
	        	&
        		\frac{|A_1- \sqrt{1 -|x'|^2} + \i B_1|}{t} \int_{0}^{1} \exp\left( - \frac{\tau^2}{4t} ((A_1 - \sqrt{1 - |x'|^2})^2 - B_1^2) \right) {\rm d}\tau \leq C, 
        \end{align*}
        for some constant $C$ independent of $A_1, A', B_1$ and $x'$ and $t$. The key inequality is the following one:
        \begin{equation}
	        	\forall (A_1,x') \in \R \times \R^{d-1} \text{ with } |x'| \leq 1, \qquad
        		\left( 1 - \sqrt{A_1^2+ |x'|^2}\right)^2 \leq \left( |A_1| - \sqrt{1 - |x'|^2}\right)^2.
        \end{equation}
        Indeed, this can be deduced from the following equivalences 
        \begin{align*}
            	&\left( 1 - \sqrt{A_1^2+ |x'|^2}\right)^2 \leq \left( |A_1| - \sqrt{1 - |x'|^2}\right)^2
            	\\
            	& \Leftrightarrow  
            	1 + A_1^2 + |x'|^2 - 2 \sqrt{A_1^2 + |x'|^2} \leq A_1^2 + 1 - |x'|^2 - 2 |A_1| \sqrt{1 - |x'|^2}
            	\\ 
            	& \Leftrightarrow  
            	|x'|^2 +  |A_1| \sqrt{1 - |x'|^2} \leq \sqrt{A_1^2 + |x'|^2}
            	\\ 
            	& \Leftrightarrow  
            	|x'|^4 + A_1^2 - A_1^2 |x'|^2 + 2 |A_1| |x'|^2 \sqrt{1 - |x'|^2} \leq A_1^2 + |x'|^2
            	\\ 
            	& \Leftrightarrow  
            	|x'|^2 - A_1^2 + 2 |A_1| \sqrt{1 -|x'|^2} \leq 1	
            	\\
            	& \Leftrightarrow  
            	0 \leq  (1 -|x'|^2) + A_1^2 - 2 |A_1| \sqrt{1 - |x'|^2} \leq 0
            	\\
            	& \Leftrightarrow  
            	0 \leq ( |A_1| - \sqrt{1 - |x'|^2} )^2 .
        \end{align*}
        Accordingly, using that 
        $$
	        	|B_1|^2 \leq \frac{1}{\alpha^2} \left( 1 - \sqrt{A_1^2+ |x'|^2}\right)^2, 
        $$
        we get that 
        \begin{align*}
            	& |B_1|^2 - (A_1 + \sqrt{1 - |x'|^2})^2 \leq - \left( 1 - \frac{1}{\alpha^2}\right) (A_1 + \sqrt{1 - |x'|^2})^2, 
            	\\
            	& |B_1|^2 - (A_1 - \sqrt{1 - |x'|^2})^2 \leq - \left( 1 - \frac{1}{\alpha^2}\right) (A_1 - \sqrt{1 - |x'|^2})^2, 
            	\\
            	& |A_1+ \sqrt{1 -|x'|^2} + \i B_1| \leq {C} | A_1 + \sqrt{1 - |x'|^2} |, 
            	\\
            	& |A_1- \sqrt{1 -|x'|^2} + \i B_1| \leq {C} | A_1 - \sqrt{1 - |x'|^2} |, 
        \end{align*}
        As in $1$-d, it follows that, for some constant $C$ independent of $t, A_1, B_1$ and $x'$,
        \begin{equation}
            	|y_{1,2}(t, A_1, B_1, x')| 
            	\leq 
		{C}\| y_{0,e}(\cdot, x' )\|_{L^\infty(\Omega_{\alpha,x'})} 
            	\leq 
            	{C} \| y_{0,e} \|_{L^\infty(\Omega_\alpha)}.
        \end{equation}
        This implies that, for some $C$ independent of $t$, $A_1, A'$ and $B_1$ with $A + \i B_1 e_1 \in \overline\Omega_\alpha$, 
        \begin{equation}
            	\label{Est-y12-x2-petit}
            	\int_{x' \in \R^{d-1}, \, |x'| \leq \sqrt{(1- \alpha |B_1|)^2 - A_1^2}}
            	G_{d-1}(t,A'-x') |y_{1,2}(t, A_1, B_1, x')| \, {\rm d}x'
            	\leq 
		{C} \| y_{0,e}\|_{L^\infty(\Omega_\alpha)}, 
        \end{equation}
        and
        \begin{equation}
            	\label{Est-nabla-x'-y12-x2-petit}
            	\int_{x' \in \R^{d-1}, \, |x'| \leq \sqrt{(1- \alpha |B_1|)^2 - A_1^2}}
            	\frac{|A'-x'|}{2t} G_{d-1}(t,A'-x') |y_{1,2}(t, A_1, B_1, x')| \, {\rm d}x'
            	\leq 
            	\frac{{C}}{\sqrt{t}} \| y_{0,e} \|_{L^\infty(\Omega_\alpha)}. 
        \end{equation}

        Regarding $z_{1,2}$, the same estimates as above can be done (similarly as well as in the $1$-d case), and we get 
        \begin{equation}
            	\label{Est-z12-x2-petit}
            	\int_{x' \in \R^{d-1}, \, |x'| \leq \sqrt{(1- \alpha |B_1|)^2 - A_1^2}}
            	G_{d-1}(t,A'-x') |z_{1,2}(t, A_1, B_1, x')| \, {\rm d}x'
            	\leq 
            	\frac{{C}}{\sqrt{t}} \| y_{0,e} \|_{L^\infty(\Omega_\alpha)}, 
        \end{equation}
	{for some constant {$C$} independent of $t, A_1, A'$ and $B_1$.}     
        
        \medskip
        
        \noindent {\bf 2.c. Case $|x'| < 1$, $\sqrt{A_1^2 + |x'|^2} < 1$ and $\sqrt{A_1^2 + |x'|^2} + \alpha |B_1| \geq 1$, i.e. $(1 -\alpha |B_1|)^2 - A_1^2 \leq |x'|^2 < 1 - A_1^2$.} This corresponds to a new case compared to the $1$-d setting. Note that this corresponds to an intermediate case between the last two cases. This will lead to the use of a Cauchy's formula in $1$-d, but there will still remain some exponential growth that will be cancelled by the term $- |A' - x'|^2$.
        
        To be more precise, we use Cauchy's formula in $1$-d to push the contour $(-\sqrt{1 - |x'|^2} , \sqrt{1 - |x'|^2})$ to the half boundary of $\Omega_{\alpha, x'}$ whose imaginary part has the same sign of $B_1$. If $\epsilon_B := \text{sign}(B_1)$, we consider the contour 
        $$
      	  	\Gamma := \enstq{ a_1 + \i b_1(a_1)}{ a_1 \in [-\sqrt{1-|x'|^2}, \sqrt{1-|x'|^2}] \text{ and } b_1(a_1) = \frac{\epsilon_B}{\alpha} \left(1- \sqrt{a_1^2+|x'|^2} \right) }.
        $$
        Then
        \begin{multline*}
            	y_{1,2}(t,A_1, B_1, x') 
            	\\
            	= 
            	\int_{- \sqrt{1-|x'|^2}}^{\sqrt{1-|x'|^2}}
            		\frac{1}{(4 \pi t)^{1/2}}\exp\left(- \frac{1}{4t} (A_1 + \i B_1 - (a_1 +\i b_1(a_1)))^2  \right)  y_{0,e} (a_1+\i b_1(a_1),x')  
            		\left( 1 - \i \frac{a_1}{\alpha \sqrt{a_1^2 + |x'|^2}}\right) \, da_1.
        \end{multline*}
        Accordingly, 
        \begin{equation*}
            	|y_{1,2}(t,A_1, B_1, x') |
            	\leq C
            	\int_{- \sqrt{1-|x'|^2}}^{\sqrt{1-|x'|^2}}
            		\frac{1}{t^{1/2}}\exp\left(- \frac{1}{4t} ((A_1 -a_1)^2 - (B_1 - b_1(a_1))^2)  \right)  
            	da_1
            	\| y_{0,e}\|_{L^\infty(\Omega_\alpha)}.
        \end{equation*}
        We then use the following estimate:
        \begin{align}
         	&(A_1 -a_1)^2 - (B_1 - b_1(a_1))^2
            	\notag\\
            	&= 
            	\left(1- \frac{1}{\alpha^2} \right)(A_1 - a)^2
            	+
            	\frac{1}{\alpha^2} 
            	\left(
            	(A_1 - a_1)^2 - (1 - \alpha |B_1|)^2 -(a_1^2+ |x'|^2) + 2 (1 - \alpha |B_1|) \sqrt{a_1^2 + |x'|^2}  \right)
            	\notag\\
            	& = 
            	\left(1- \frac{1}{\alpha^2} \right)(A_1 - a_1)^2 
            	+
            	\frac{1}{\alpha^2} 
            	\left(
            	A_1^2 -2 A_1 a_1 - (1 - \alpha |B_1|)^2 -|x'|^2+ 2 (1 - \alpha |B_1|) \sqrt{a_1^2 + |x'|^2}  \right)
            	\notag\\
            	& \geq 
            	\left(1- \frac{1}{\alpha^2} \right)(A_1 - a_1)^2 
            	+ 
            	\frac{1}{\alpha^2} 
            	\inf_{a_1}
            	\left\{ 
            	A_1^2 -2 A_1 a_1 - (1 - \alpha |B_1|)^2 -|x'|^2+ 2 (1 - \alpha |B_1|) \sqrt{a_1^2 + |x'|^2}  		\right\}
            	\\
            	\label{Algebra-Contour-Gamma}
            	& \geq 
            	\left(1- \frac{1}{\alpha^2} \right)(A_1 - a_1)^2 
            	+ 
            	\frac{1}{\alpha^2} 
            	\left( A_1^2  - (1 - \alpha |B_1|)^2 -|x'|^2+ 2 |x'| \sqrt{ (1 - \alpha |B_1|)^2 - A_1^2} \right).
        \end{align}
        It follows that 
        \begin{equation*}
            	|y_{1,2}(t,A_1, B_1, x') |
            	\leq {C} \exp\left( \frac{1}{4 t \alpha^2} 
            	\left((1 - \alpha |B_1|)^2 -A_1^2 + |x'|^2 - 2 |x'| \sqrt{ (1 - \alpha |B_1|)^2 - A_1^2} \right)
            \right)
            	\| y_{0,e}\|_{L^\infty(\Omega_\alpha)},
        \end{equation*}
	{for some constant $C$ independent of $t, A_1, A'$ and $B_1$.} 
        Accordingly, 
        \begin{align*}
            	& 
            	G_{d-1}(t,A'-x') |y_{1,2}(t, A_1, B_1, x')|
            	\\
            	& \leq 
            	\frac{{C}}{t^{(d-1)/2}} 
            	\exp
            		\left(
            			\frac{1}{4t}
            			\left(
            				- |x' - A'|^2 + \frac{1}{\alpha^2} 
            				\left(
            					(1 - \alpha |B_1|)^2 -A_1^2 + |x'|^2 - 2 |x'| \sqrt{ (1 - \alpha |B_1|)^2 - A_1^2} 		
            				\right)
            			\right)
            		\right)
            	\| y_{0,e} \|_{L^\infty(\Omega_\alpha)}
            	\\
            	& \leq 
            	\frac{{C} }{t^{(d-1)/2}} 
            	\exp
            		\left(
            			\frac{1}{4t}
            			\left(
            				- \left(1- \frac{1}{\alpha^2} \right)|x' - A'|^2 
            				+ 
            				\frac{1}{\alpha^2} h_\alpha(A, B_1,x')
            			\right)
            		\right)
            	\| y_{0,e} \|_{L^\infty(\Omega_\alpha)}. 
        \end{align*}
        where 
        \begin{align*}
            	h_\alpha (A, B, x')	
            	= 
            	- |x' - A'|^2 + (1 - \alpha |B_1|)^2 -A_1^2 + |x'|^2 - 2 |x'| \sqrt{ (1 - \alpha |B_1|)^2 - A_1^2} 		
            	\\
            	= (1 - \alpha |B_1|)^2 -(A_1^2 +|A'|^2) + 2 A' \cdot x' - 2 |x'| \sqrt{ (1 - \alpha |B_1|)^2 - A_1^2}. 
        \end{align*}
        Since $A + \i B_1 e_1 \in \Omega_\alpha$, $A_1^2 + |A'|^2 \leq (1 - \alpha |B_1|)^2$, and thus
        $$
	        	h_\alpha(A, B, x') \leq 2 A' \cdot x' - 2 |A'| |x'| \leq 0.
        $$
        We then immediately deduce that 
        \begin{equation}
        		\label{Est-y12-x2-intermediate}
	        	\int_{x' \in \R^{d-1}, \, |x'| \in (\sqrt{(1- \alpha |B_1|)^2 - A_1^2}, \sqrt{1 - A_1^2})}
		G_{d-1}(t,A'-x') |y_{1,2}(t, A_1, B_1, x')| \, {\rm d}x'
        		\leq 
		{C} \|y_{0,e}\|_{L^\infty(\Omega_\alpha)},
        \end{equation}
        and
        \begin{equation}
	        	\label{Est-nabla-x'-y12-x2-intermediate}
        		\int_{x' \in \R^{d-1}, \, |x'| \in (\sqrt{(1- \alpha |B_1|)^2 - A_1^2}, \sqrt{1 - A_1^2})}
	        	\frac{|A' - x'|}{2t}G_{d-1}(t,A'-x') |y_{1,2}(t, A_1, B_1, x')| \, {\rm d}x'
        		\leq 
	        	\frac{{C}}{\sqrt{t}} \|y_{0,e}\|_{L^\infty(\Omega_\alpha)},
        \end{equation}
	{for some constant {$C$} independent of $t, A_1, A'$ and $B_1$.} 
        To estimate $z_{2,1}$, we start as above by moving the contour of integration from $(-\sqrt{1 - |x'|^2} , \sqrt{1 - |x'|^2})$ to $\Gamma$, and then by noticing that $|A_1 + \i B_1 -(a_1 + \i b_1(a_1))| \leq |A_1 - a_1| + |B_1 - b_1(a_1)|$ and the estimate 
        $$
        		\alpha^2 |B_1 - b_1(a_1)|^2 \leq |A_1 - a_1|^2 + g(A_1, B_1,x'),
 $$
where  $$
 g(A_1, B_1,x')= (1 - \alpha |B_1|)^2 -A_1^2 + |x'|^2 - 2 |x'| \sqrt{ (1 - \alpha |B_1|)^2 - A_1^2}, 
        $$
        (recall \eqref{Algebra-Contour-Gamma}), we obtain that 
        \begin{align*}
            	|z_{1,2}(t,A_1, B_1,x')| 
            	& \leq 
            	\frac{{C}}{\sqrt{t}} \left(1+ \sqrt{\frac{g(A_1,B_1,x')}{t}} \right) \exp\left(\frac{1}{4 t \alpha^2} g(A_1, B_1, x') \right) \|y_{0,e}\|_{L^\infty(\Omega_\alpha)}
            	\\
            	&\leq
            	\frac{{C}}{\sqrt{t}} \exp\left(\frac{1}{4 t \alpha} g(A_1, B_1, x') \right) \|y_{0,e}\|_{L^\infty(\Omega_\alpha)},            		
        \end{align*}
	{for some constant $C$ independent of $t, A_1, A'$ and $B_1$.} 
        Arguing as previously for \eqref{Est-y12-x2-intermediate}, we deduce 
        \begin{equation}
            	\label{Est-z12-x2-intermediate}
            	\int_{x' \in \R^{d-1}, \, |x'| \in (\sqrt{(1- \alpha |B_1|)^2 - A_1^2}, \sqrt{1 - A_1^2})}
            	G_{d-1}(t,A'-x') |z_{1,2}(t, A_1, B_1, x')| \, {\rm d}x'
            	\leq 
            	\frac{C_\alpha}{\sqrt{t}} \|y_{0,e}\|_{L^\infty(\Omega_\alpha)}.
        \end{equation}
        \medskip
        
        \noindent {\bf Conclusion.} Then, \eqref{eq:goal 1} follows by combining the estimates \eqref{Est-Tt-y0-x2>1}, \eqref{Est-y11-x2<1}, \eqref{Est-y12-x2>sqrt(1-a_1^2)}, \eqref{Est-y12-x2-petit} and \eqref{Est-y12-x2-intermediate}.
Similarly, combining the estimates \eqref{Est-partial1-Tt-y0-x2>1}, \eqref{Est-z-11}, \eqref{Est-z12-x2>sqrt(1-a_1^2)}, \eqref{Est-z12-x2-petit} and \eqref{Est-z12-x2-intermediate}, there is a constant $C_{1}$ such that for all $t \geq 0$ and $A + \i B_1 e_1 \in \overline\Omega_\alpha$,
        \begin{equation*}
	        	\sqrt{t} |\partial_1 \T_t y_{0}( A + \i B_1 e_1)| 
        		\leq 
	        	{C_{1}} \| y_0\|_{X_\alpha}.
        \end{equation*}
        and, combining the estimates \eqref{Est-Nabla'-Tt-y0-x2>1}, \eqref{Est-nabla'-y-11}, \eqref{Est-nabla-x'-y12-x2>sqrt(1-a_1^2)}, \eqref{Est-nabla-x'-y12-x2-petit} and \eqref{Est-nabla-x'-y12-x2-intermediate}, there is a constant $C_{2}$ such that, for all $j \in \{2, \cdots, d\}$, for all $t \geq 0$ and $A + \i B_1 e_1 \in \overline\Omega_\alpha$,
        \begin{equation*}
        		\sqrt{t} |\partial_j \T_t y_{0}( A + \i B_1 e_1)| 
	        	\leq 
        		C_{2}\| y_0\|_{X_\alpha}.
        \end{equation*}
Then the estimate \eqref{eq:goal 2} follows from the two above inequalities. This concludes the proof of Theorem \ref{Thm-Estimates-X-alpha-Gal-d}.
\end{proof}

\subsection{Proof of the strong continuity of the heat semigroup on $X_\alpha$}

This section aims at proving the strong continuity of the heat semigroup on $X_\alpha$. 
\begin{proposition}
	\label{Prop-C0-Semigroup}
	Let $d \geq 1$ and $\alpha >1$. Then the heat semigroup is a strongly continuous semigroup on $X_\alpha$.
\end{proposition}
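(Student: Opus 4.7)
By the definition \eqref{Def-Norm-X-alpha} of the $X_\alpha$-norm, the claimed strong continuity
$\|\T_t y_0 - y_0\|_{X_\alpha}\to 0$ as $t\to 0^+$ is equivalent to the conjunction of $\|\T_t y_0 - y_0\|_{L^\infty(\R^d)}\to 0$ and $\|(\T_t y_0)_e - y_{0,e}\|_{L^\infty(\overline{\Omega}_\alpha)}\to 0$, where $(\T_t y_0)_e$ denotes the continuous holomorphic extension of $\T_t y_0$ to $\overline{\Omega}_\alpha$ built via the integral formula in the proof of Theorem~\ref{Thm-Estimates-X-alpha-Gal-d}. The $L^\infty(\R^d)$ part is the well-known strong continuity of the heat semigroup on $\BUC(\R^d)$, which is a direct consequence of the approximation-of-identity property of $\{G_d(t)\}_{t>0}$ combined with the uniform continuity of~$y_0$.

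To obtain the convergence in $L^\infty(\overline{\Omega}_\alpha)$, I would combine the uniform boundedness of $\{(\T_t y_0)_e\}_{t>0}$ in $L^\infty(\overline{\Omega}_\alpha)$ granted by Theorem~\ref{Thm-Well-Posedness-X-alpha}, with Vitali's convergence theorem for holomorphic functions. The previous step already delivers the pointwise convergence $(\T_t y_0)_e(x) \to y_{0,e}(x)$ at every point $x$ of the nonempty open set $B_{\R^d}(0,1)\subset \Omega_\alpha$. Since $B_{\R^d}(0,1)$ has accumulation points inside the open set $\Omega_\alpha$, Vitali's theorem yields locally uniform convergence of $(\T_t y_0)_e$ to $y_{0,e}$ on $\Omega_\alpha$.

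The last step is to promote locally uniform convergence on $\Omega_\alpha$ to uniform convergence on the closed set $\overline{\Omega}_\alpha$. Given $\varepsilon>0$, I would use the dilation $z\mapsto \rho z$ for $\rho\in (0,1)$ and the triangle inequality
\begin{align*}
(\T_t y_0)_e(z) - y_{0,e}(z)
&= \bigl[(\T_t y_0)_e(z) - (\T_t y_0)_e(\rho z)\bigr] \\
&\quad + \bigl[(\T_t y_0)_e(\rho z) - y_{0,e}(\rho z)\bigr] + \bigl[y_{0,e}(\rho z) - y_{0,e}(z)\bigr].
\end{align*}
Since $\rho\overline{\Omega}_\alpha\subset\Omega_\alpha$ whenever $\rho<1$, the third term is small uniformly in $z\in\overline{\Omega}_\alpha$ for $\rho$ close to $1$ by uniform continuity of $y_{0,e}$ on the compact set $\overline{\Omega}_\alpha$, while, once $\rho$ is fixed, the second term is small uniformly in $z$ for $t$ small by the locally uniform convergence on the compact set $\rho\overline{\Omega}_\alpha$.

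The remaining, and main, obstacle is to control the first term $\bigl|(\T_t y_0)_e(z) - (\T_t y_0)_e(\rho z)\bigr|$ uniformly in both $t>0$ and $z\in\overline{\Omega}_\alpha$: the gradient estimate of Theorem~\ref{Thm-Well-Posedness-X-alpha} only yields a bound of order $(1-\rho)/\sqrt{t}$, which degenerates as $t\to 0$. To bypass this, I would revisit the integral representation and the contour-deformation arguments from the proof of Theorem~\ref{Thm-Estimates-X-alpha-Gal-d}: splitting $(\T_t y_0)_e(z)$ into an exterior part (integration over $\{|x_0|>1\}$ against the $\BUC$ function $y_0$) and an interior part (integration over $\{|x_0|<1\}$ against $y_{0,e}$, whose contour is then deformed through $z$), the explicit formulas show that the Gaussian exponential blow-up at complex points is compensated by the geometry of the deformed contour, so that a $t$-independent modulus of continuity for $(\T_t y_0)_e$ can be read off from the uniform moduli of continuity of $y_0$ on $\R^d$ and of $y_{0,e}$ on $\overline{\Omega}_\alpha$. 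Once this equicontinuity is established, the three-term splitting above immediately gives the desired uniform convergence and thus Proposition~\ref{Prop-C0-Semigroup}.
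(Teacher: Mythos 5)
Your first two steps (strong continuity on $\BUC(\R^d)$, and Vitali/Montel to upgrade uniform boundedness plus pointwise convergence on $B_{\R^d}(0,1)$ to locally uniform convergence on the open set $\Omega_\alpha$) are correct, and your three-term splitting correctly isolates where the difficulty lies. But the step you yourself flag as the main obstacle is a genuine gap, and the proposed resolution does not work as stated. The $t$-uniform equicontinuity of $\{(\T_t y_0)_e\}_{t>0}$ on $\overline\Omega_\alpha$ is essentially equivalent to the strong continuity you are trying to prove (pointwise convergence plus equicontinuity gives uniform convergence on the compact set, and conversely uniform convergence to the continuous limit $y_{0,e}$ gives equicontinuity near $t=0$), so asserting that it ``can be read off'' from the explicit formulas begs the question. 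Concretely, it cannot be read off from the decomposition used in the proof of Theorem~\ref{Thm-Estimates-X-alpha-Gal-d}: near the real corner points of $\partial\Omega_\alpha$ (e.g.\ $z=\pm 1$ in dimension one) the two pieces $y_1$ and $y_2$ are individually \emph{not} equicontinuous --- as $t\to0$, $y_1(t,\cdot)$ tends to $0$ at interior points at distance $\gg\sqrt t$ from the corner but to $\tfrac12 y_0(\pm1)$ at the corner itself, and $y_2$ exhibits the complementary jump --- so only their sum can be equicontinuous, and controlling that sum in the boundary layer $1-|a|\lesssim\sqrt t$ requires a matched analysis you have not supplied; the gradient bound $C/\sqrt t$ is, as you note, useless exactly there.

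The paper's proof avoids this entirely by dilating the \emph{datum} rather than the argument of the solution: it approximates $y_0$ in $X_\alpha$ by $z_{0,\lambda}=\eta\, y_0(\cdot/\lambda)+(1-\eta)y_0$, whose restriction to $B_{\R^d}(\lambda)$ extends holomorphically to the strictly larger set $\lambda\Omega_\alpha$, and uses the scaling invariance of the heat semigroup to get a $t$-uniform bound for $\T_t z_{0,\lambda}$ in $\mathcal{C}(\lambda\overline\Omega_\alpha)\cap\mathrm{Hol}(\lambda\Omega_\alpha)$. Montel's theorem then gives compactness of this family in $\mathcal{C}(\overline\Omega_\alpha)$ because $\overline\Omega_\alpha$ is a compact subset of the open set $\lambda\Omega_\alpha$, and the identity theorem pins down the limit; no equicontinuity up to $\partial\Omega_\alpha$ is ever needed. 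A standard $\varepsilon/3$ argument in $\lambda$ and $t$ concludes. To repair your proof, either carry out the corner boundary-layer estimate explicitly, or (much simpler) insert this approximation by $z_{0,\lambda}$ before running your otherwise correct Vitali argument.
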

\begin{proof}
	Let $y_0 \in X_\alpha$. For $\lambda >1 $, introduce the functions 
	$$
		y_{0, \lambda} (x) = y_0\left( \frac{x}{\lambda} \right), \qquad x \in \R^d.
	$$
	Since $y_0 \in \BUC (\R^d)$, for all $K> 0$, $\lim_{\lambda\to 1^+}\| y_{0,\lambda} - y_0\|_{L^\infty(B_{\R^d}(K))}  = 0$. 

	Also, since $y_0\large|_{B_{\R^d}(1)}$ admits an holomorphic extension $y_{0,e}$ in $\Omega_\alpha$
which can be extended as a continuous function on $\overline\Omega_\alpha$, this is also case for $y_{0,\lambda}$ and its holomorphic extension $y_{0,\lambda,e}$ in $\Omega_\alpha$ is simply given by 
	$$
		y_{0, \lambda,e}( a+ \i b ) =  y_{0,e}\left( \frac{a+\i b}{\lambda} \right), \qquad (a + \i b) \in \overline\Omega_\alpha.
	$$
	Since $y_{0,e}$ is continuous in $\overline\Omega_\alpha$, it is uniformly continuous in $\overline\Omega_\alpha$ and $\lim_{\lambda\to 1^+}\| y_{0,\lambda,e} - y_{0,e}\|_{L^\infty(\overline\Omega_\alpha)}  = 0$.

	We thus introduce a smooth cut-off function $\eta$ taking value in $[0,1]$, compactly supported in $B_{\R^d}(3)$, and taking value $1$ in $B_{\R^d}(2)$, and set 
	$$
		z_{0, \lambda} (x) = \eta(x) y_0\left( \frac{x}{\lambda} \right) + (1 - \eta(x)) y_0(x), \quad x \in \Omega.
	$$
	Then 
	$$
		\lim_{\lambda \to 1^+} \| y_0 -z_{0,\lambda} \|_{X_\alpha} = 0, 
	$$
	and, for $\lambda \in (1, 2]$, $z_{0,\lambda}$ belongs to the set 
	\begin{multline}
		\label{Def-X-alpha-lambda}
		X_{\alpha,\lambda} := 
		\enstq{ f \in \BUC (\R^d)}{ f{\large|}_{B_{\R^d}(\lambda)} \text{ admits a continuous extension } f_e \text{ on } \lambda \overline\Omega_\alpha
		 \text{, which belongs to } \mathrm{Hol}(\lambda\Omega_\alpha)} 
		 , 
	\end{multline}
	endowed with the norm
	\begin{equation}
		\label{Def-Norm-X-alpha-lambda}
		\| f \|_{X_{\alpha,\lambda}} 
		:= 
		\| f \|_{L^\infty(\R^d)} 
		+ 
		\| f_e \|_{L^{\infty}(\lambda \Omega_\alpha)}.
	\end{equation}

	Since the heat semigroup is invariant by scaling $(t,x) \mapsto (\lambda^2 t, \lambda x)$, we thus have, with the same constant as in Theorem \ref{Thm-Estimates-X-alpha-Gal-d}, that for all $\lambda \in (1, 2]$, for all $t \geq 0$, 
	\begin{equation}
		\label{Bound-T-t-z-lambda-Xalpha-lambda}
		\| \mathbb{T}_t z_{0,\lambda} \|_{X_{\alpha, \lambda}}
		\leq 
		C \| z_{0,\lambda}\|_{X_{\alpha,\lambda}} 
		\leq 
		C \| y_0\|_{X_\alpha}.
	\end{equation}
	
	Let us then take $\varepsilon >0$. Then there exists $\lambda \in (1,2]$ such that $\| z_{0,\lambda} - y_0\|_{X_\alpha} \leq \varepsilon$. Thus, for all $t>0$, using Theorem \ref{Thm-Estimates-X-alpha-Gal-d}, we get
	\begin{equation}
		\| \T_t y_0 - y_0 \|_{X_{\alpha}} 
		\leq 
		\| \T_t (y_0 - z_{0,\lambda} ) \|_{X_\alpha}
		+ 
		\| \T_t z_{0,\lambda} - z_{0,\lambda} \|_{X_{\alpha}} 
		+ 
		\| z_{0,\lambda} - y_0 \|_{X_\alpha}
		\leq 
		C \varepsilon + 	\| \T_t z_{0,\lambda} - z_{0,\lambda} \|_{X_{\alpha}}.
	\end{equation}
	Since the heat semigroup is strongly continuous on $\BUC(\R^d)$, $\lim_{t\to 0} \| \T_t z_{0,\lambda} - z_{0,\lambda} \|_{L^\infty(\R^d)} = 0$. Now, from \eqref{Bound-T-t-z-lambda-Xalpha-lambda}, $\T_t z_{0,\lambda}$ admits an holomorphic expansion in $\lambda \Omega_{\alpha}$, which is bounded and continuous in $\lambda \overline\Omega_{\alpha}$. Since the set $\mathcal{C}(\lambda \overline\Omega_\alpha) \cap \text{Hol}(\lambda \Omega_\alpha)$ is compact in $\mathcal{C}(\overline\Omega_\alpha)\cap \text{Hol}(\lambda \Omega_\alpha)$, and since $\T_t z_{0,\lambda}$ converges to $z_{0,\lambda}$ in $L^\infty(B_{\R^d}(1))$, any accumulation point of $(\T_t z_{0,\lambda})$ as $t \to 0$ is an holomorphic function on $\Omega_\alpha$ which coincides with $z_{0,\lambda}$ in $\overline B_{\R^d}(1)$. Accordingly, we necessarily have $\lim_{t\to 0} \| \T_t z_{0,\lambda} - z_{0,\lambda} \|_{X_\alpha} = 0$.
	
	With the previous estimate, this yields that 
	$$
		\limsup_{t \to 0} \| \T_t y_0 - y_0 \|_{X_{\alpha}} 
		\leq 
		C \varepsilon.
	$$
	Since $\varepsilon>0$ is arbitrary, we have obtained 
	$$
		\lim_{t \to 0} \| \T_t y_0 - y_0 \|_{X_{\alpha}} = 0.
	$$
	This concludes the proof of Proposition \ref{Prop-C0-Semigroup}.
\end{proof}	

\subsection{Proof of Theorem \ref{Thm-Well-Posedness-X-alpha}}

In view of Theorem \ref{Thm-Estimates-X-alpha-Gal-d} and Proposition \ref{Prop-C0-Semigroup}, it only remains to prove that the heat semigroup is analytic on $X_\alpha$ and the estimate on $\T_t \partial_j$. 

The analyticity of the heat semigroup on $X_\alpha$ follows from the fact that the heat semigroup is a strongly continuous semigroup on $X_\alpha$, whose generator is, according to \cite[Section 2.3 p.60]{Engel-Nagel} thus given by 
$$
	A y = \Delta y, 
	\text{ with domain } \mathscr{D}(A) = \{ y \in \BUC^2(\R^d) \cap X_\alpha, \, \text{ such that } \Delta y \in X_\alpha\}.
$$
Here, the set $\BUC^2(\R^d)$ is the set of all functions $y \in \mathcal{C}^2(\R^d)$ such that for all $(j, k) \in \{1, \cdots, d\}^2$, $y$, $\partial_j y$ and $\partial_{j,k} y $ all belong to $\BUC(\R^d)$, which coincides with the domain of the heat semigroup on $\BUC(\R^d)$.

Now, using the estimates of Theorem \ref{Thm-Estimates-X-alpha-Gal-d} and the fact that, for each $j \in \{1, \cdots, d\}$, $\partial_j$ and the heat semigroup commutes on the set $\BUC^2(\R^d)$, we get that for $y_0 \in X_\alpha\cap \BUC^2(\R^d)$, and $t > 0$, 
\begin{equation*}
	\| A \T_t y_0 \|_{X_\alpha} 
	\leq 
	\sum_{j = 1}^d \| \partial_{jj} \T_t y_0\|_{X_\alpha}
	\leq
	\sum_{j = 1}^d \| \partial_{j} \T_{t/2} \partial_j \T_{t/2} y_0\|_{X_\alpha}\leq 
	\frac{C}{\sqrt{t}} \sum_{j = 1}^d \|\partial_j \T_{t/2} y_0\|_{X_\alpha}
	\leq 
	\frac{C}{t} \| y_0 \|_{X_\alpha}.
\end{equation*}
This can of course be extended to any $y_0 \in X_\alpha$ by density. We have thus obtained one of the characterization of analyticity of the semigroup $(\T_t)_{t \geq 0}$ on $X_\alpha$, see for instance \cite[Theorem 4.6 p.101]{Engel-Nagel}. 
This ends the proof of Theorem \ref{Thm-Well-Posedness-X-alpha}.
\hfill $\qed$

\subsection{Additional estimates}
We end this section with a corollary of Theorem \ref{Thm-Well-Posedness-X-alpha} which will be used in the following. We introduce the functional space $X_\alpha^1$ given for $\alpha >0$ by 
\begin{equation}
	\label{Def-X-alpha-1}
	X_\alpha^1 = \{ y \in X_\alpha, \text{ such that for all $j \in \{1, \cdots, d\}$, } \partial_j y \in X_\alpha \}
\end{equation}
endowed with the topology given by the norm
$$
	\| y \|_{X_\alpha^1 } 
	= 
	\| y \|_{X_\alpha } 
	+ 
	\sum_{j = 1}^d \|\partial_j y \|_{X_\alpha }.
$$
We then get the following result:
\begin{theorem}\label{Thm-Well-Posedness-X-alpha-1}
	Let $d \in \N$ and $\alpha >1$. Then the heat semigroup $\T$ is an analytic semigroup on $X_\alpha^1$, and we have the following estimates: there exists $C>0$ such that for all $t > 0$ and for all $y_0 \in X_\alpha^1$, 
	\begin{equation}
		\label{Est-Well-Posedness-X-alpha-1}
		\| \T_t y_0 \|_{X_\alpha^1} 
		+ 
		\sqrt{t } \| \nabla (\T_t y_0) \|_{X_\alpha^1}
		\leq C \| y_0\|_{X_\alpha^1}.
	\end{equation}
\end{theorem}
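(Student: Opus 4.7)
The plan is to deduce everything from Theorem \ref{Thm-Well-Posedness-X-alpha} by exploiting that each partial derivative $\partial_j$ commutes with the heat semigroup $\mathbb{T}$. The key observation is that if $y_0 \in X_\alpha^1$, then by Definition \ref{Def-X-alpha} and \eqref{Def-X-alpha-1}, $y_0 \in \mathrm{BUC}(\R^d) \cap \mathcal{C}^1(\R^d)$ with $\partial_j y_0 \in X_\alpha \subset \mathrm{BUC}(\R^d)$. Consequently, on the algebra-level, $\partial_j \mathbb{T}_t y_0 = \mathbb{T}_t \partial_j y_0$ for all $t > 0$ and $j \in \{1, \cdots, d\}$, both as functions on $\R^d$ and, by analytic continuation, as holomorphic extensions on $\Omega_\alpha$.

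First, I would establish the stability estimate. Applying Theorem \ref{Thm-Well-Posedness-X-alpha} to $y_0$ gives $\|\mathbb{T}_t y_0\|_{X_\alpha} + \sqrt{t}\|\nabla \mathbb{T}_t y_0\|_{X_\alpha} \leq C\|y_0\|_{X_\alpha}$, and applying it to each $\partial_j y_0 \in X_\alpha$ yields
$$
\|\partial_j \mathbb{T}_t y_0\|_{X_\alpha} + \sqrt{t}\|\nabla \partial_j \mathbb{T}_t y_0\|_{X_\alpha}
= \|\mathbb{T}_t \partial_j y_0\|_{X_\alpha} + \sqrt{t}\|\nabla \mathbb{T}_t \partial_j y_0\|_{X_\alpha}
\leq C\|\partial_j y_0\|_{X_\alpha}.
$$
Summing over $j$ and adding to the previous inequality yields \eqref{Est-Well-Posedness-X-alpha-1}.

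Next, for strong continuity on $X_\alpha^1$, I would use that $\mathbb{T}$ is strongly continuous on $X_\alpha$ (Proposition \ref{Prop-C0-Semigroup}) applied to both $y_0$ and to each $\partial_j y_0 \in X_\alpha$, using again the commutation $\partial_j \mathbb{T}_t y_0 - \partial_j y_0 = \mathbb{T}_t \partial_j y_0 - \partial_j y_0 \to 0$ in $X_\alpha$ as $t \to 0^+$. Summing these convergences in the norm of $X_\alpha^1$ gives strong continuity.

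Finally, for analyticity, I would verify the characterization via $\|A \mathbb{T}_t y_0\|_{X_\alpha^1} \leq (C/t) \|y_0\|_{X_\alpha^1}$, mimicking the argument at the end of the proof of Theorem \ref{Thm-Well-Posedness-X-alpha}. Using the semigroup property $\mathbb{T}_t = \mathbb{T}_{t/2} \mathbb{T}_{t/2}$, the commutation of $\partial_j$ with $\mathbb{T}$, and the estimate \eqref{Est-Well-Posedness-X-alpha-1}, one obtains for $y_0$ in a suitable dense subspace (e.g.\ $X_\alpha^1 \cap \mathrm{BUC}^3(\R^d)$)
$$
\|\Delta \mathbb{T}_t y_0\|_{X_\alpha^1}
\leq \sum_{j=1}^d \|\partial_j \mathbb{T}_{t/2} \partial_j \mathbb{T}_{t/2} y_0\|_{X_\alpha^1}
\leq \frac{C}{\sqrt{t}} \sum_{j=1}^d \|\partial_j \mathbb{T}_{t/2} y_0\|_{X_\alpha^1}
\leq \frac{C}{t}\|y_0\|_{X_\alpha^1},
$$
after which density extends the bound to all of $X_\alpha^1$, and analyticity follows from \cite[Theorem 4.6 p.101]{Engel-Nagel}. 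The only mild technical point — which I do not expect to be an obstacle but would handle carefully — is the justification that the holomorphic extension of $\partial_j \mathbb{T}_t y_0$ on $\Omega_\alpha$ (viewed as an element of $X_\alpha$) agrees with the holomorphic extension of $\mathbb{T}_t \partial_j y_0$, so that the norm identity above is legitimate; this is immediate because both are continuous on $\overline{\Omega}_\alpha$, holomorphic on $\Omega_\alpha$, and coincide on $B_{\R^d}(1)$ by the standard commutation on $\R^d$.
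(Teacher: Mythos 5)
Your proposal is correct and follows essentially the same route as the paper, which likewise derives everything from the commutation of $\partial_j$ with the heat semigroup together with Theorem \ref{Thm-Estimates-X-alpha-Gal-d} and the strong-continuity argument of Proposition \ref{Prop-C0-Semigroup} (the paper simply leaves the details to the reader). Your extra care about identifying the holomorphic extensions of $\partial_j \T_t y_0$ and $\T_t \partial_j y_0$ is a sensible filling-in of exactly those omitted details.
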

\begin{proof}
	Here again, we use that on $\BUC^2(\R^d)$, for each $j \in \{1, \cdots, d\}$, $\partial_j$ commutes with the heat semigroup. The estimates \eqref{Est-Well-Posedness-X-alpha-1} then follows from Theorem \ref{Thm-Estimates-X-alpha-Gal-d}. The strong continuity of the heat semigroup can then be done as in Proposition \ref{Prop-C0-Semigroup}. Details are left to the reader.
\end{proof}

\section{Well-posedness results of the heat equations with various lower order terms in spaces of holomorphic functions}\label{Sec-WP-Heat+lower-order}

{\bf Notations.} From now on, to alleviate notation, for $X$ a Banach space, $p \in [1, \infty]$ and $T>0$, the norms $\| \cdot \|_{L^p_T(X)}$ will denote the norms of $L^p(0,T; X)$. Similarly, when the Banach space $X$ is a Sobolev space of the form $L^q(\R^d)$ or $W^{1,q}(\R^d)$, we will simply write $ \| \cdot \|_{L^p_T(L^q)}$, or $\| \cdot \|_{L^p_T(W^{1,q})}$, instead of $ \| \cdot \|_{L^p(0,T;L^q(\R^d))}$ or $\| \cdot \|_{L^p(0,T;W^{1,q}(\R^d))}$.
\medskip

In this section, we gather several well-posedness results for the heat equation in $\R^d$ with various lower terms, based on the analyticity of the heat semigroup on $X_\alpha$ proved in Theorem \ref{Thm-Well-Posedness-X-alpha} and on $X_\alpha^1$ in Theorem \ref{Thm-Well-Posedness-X-alpha-1}.

Let us start with the following result:
\begin{theorem}
	\label{Thm-WP-with-source-terms}
	Let $d \geq 1$, $T>0$, and $\alpha >1$. 
	
	There exists $C>0$ independent of $T>0$ such that, for $y_0 \in X_\alpha$ and $f \in L^1(0,T; X_\alpha)$, 
	the solution $y$ of 
	\begin{equation}
		\label{Heat-Eq-Source-Terms}
		\left\{
			\begin{array}{ll}
				\partial_t y - \Delta y = f,
				 \quad & \text{ in } (0, T) \times \R^d, 
				\\
				y(0, \cdot) = y_0, \quad & \text{ in } \R^d.
			\end{array}
		\right.
	\end{equation}
	belongs to $\mathcal{C}([0,T]; X_\alpha)$, and satisfies
	\begin{equation}
	\label{Est-WP-source-terms}
		\| y \|_{L^{\infty}_{T}(X_\alpha)}
		\leq
		C 
		\left(
			\|y_0\|_{X_\alpha}
			+
			\| f \|_{L^{1}_{T}(X_\alpha)}
		\right).
	\end{equation}
	If furthermore $t \mapsto \sqrt{t} f(t,\cdot) \in L^\infty(0,T; X_\alpha)$, we also have 
	\begin{equation}
	\label{Est-WP-source-terms-mieux}
		 \|\sqrt{t}\nabla_x y(t,\cdot) \|_{L^{\infty}_T(X_\alpha)}
		\leq
		C 
		\left(
			\|y_0\|_{X_\alpha}
			+
			\sqrt{T} \| \sqrt{t} f(t,\cdot) \|_{L^{\infty}_{T}(X_\alpha)}
		\right).
	\end{equation}

	\emph{Additional regularity.} There exists $C>0$ independent of $T>0$ such that, for $y_0 \in X_\alpha^1$ and $f \in L^\infty(0,T; X_\alpha)$, the solution $y$ of \eqref{Heat-Eq-Source-Terms} belongs to $\mathcal{C}([0,T]; X_\alpha^1)$, and satisfies 
	\begin{equation}
	\label{Est-WP-source-terms-1-0}
		\| y \|_{L^{\infty}_{T}(X_\alpha^1)}
		\leq
		C 
		\left(
			\|y_0\|_{X_\alpha^1}
			+
			\sqrt{T} \| f \|_{L^{\infty}_{T}(X_\alpha)}
		\right). 
	\end{equation}
\end{theorem}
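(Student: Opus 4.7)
The plan is to realize the solution $y$ of \eqref{Heat-Eq-Source-Terms} through Duhamel's formula
$$
y(t) = \mathbb{T}_t y_0 + \int_0^t \mathbb{T}_{t-s} f(s) \, ds,
$$
and to plug in the uniform and smoothing bounds on $\mathbb{T}$ proved in Theorems \ref{Thm-Well-Posedness-X-alpha} and \ref{Thm-Well-Posedness-X-alpha-1}. The three inequalities of the statement are then just three different ways of estimating this representation.

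For \eqref{Est-WP-source-terms}, apply $\|\mathbb{T}_t\|_{\mathscr{L}(X_\alpha)} \leq C$ (Theorem \ref{Thm-Well-Posedness-X-alpha}) under the integral to get
$$
\|y(t)\|_{X_\alpha} \leq C\|y_0\|_{X_\alpha} + C \int_0^t \|f(s)\|_{X_\alpha} \, ds \leq C\bigl(\|y_0\|_{X_\alpha} + \|f\|_{L^1_T(X_\alpha)}\bigr).
$$
Continuity of $y$ in $X_\alpha$ then follows from strong continuity of the semigroup (Proposition \ref{Prop-C0-Semigroup}) combined with a standard dominated convergence argument applied to the Duhamel integral.

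For \eqref{Est-WP-source-terms-mieux}, use instead the smoothing bound $\sqrt{t}\|\nabla_x \mathbb{T}_t\|_{\mathscr{L}(X_\alpha)} \leq C$ from Theorem \ref{Thm-Well-Posedness-X-alpha}. Writing $M := \|\sqrt{s}\, f(s)\|_{L^\infty_T(X_\alpha)}$, one obtains
$$
\sqrt{t}\,\|\nabla_x y(t)\|_{X_\alpha} \leq C\|y_0\|_{X_\alpha} + C\sqrt{t}\, M \int_0^t \frac{ds}{\sqrt{(t-s)s}} = C\|y_0\|_{X_\alpha} + \pi\, C\sqrt{t}\, M,
$$
where the Beta integral $\int_0^t ds/\sqrt{s(t-s)} = \pi$ is computed by the substitution $s = tu$. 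Taking the supremum in $t \in (0,T)$ gives the announced $\sqrt{T}$-dependent bound.

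For the additional regularity \eqref{Est-WP-source-terms-1-0}, use that on $\mathrm{BUC}^2(\R^d)$ each $\partial_j$ commutes with $\mathbb{T}_t$, so for $y_0 \in X_\alpha^1$ we have $\partial_j \mathbb{T}_t y_0 = \mathbb{T}_t \partial_j y_0$, and differentiating Duhamel (in $x_j$)
$$
\partial_j y(t) = \mathbb{T}_t \partial_j y_0 + \int_0^t \partial_j \mathbb{T}_{t-s} f(s)\, ds.
$$
The first term is controlled by $C\|y_0\|_{X_\alpha^1}$ via Theorem \ref{Thm-Well-Posedness-X-alpha-1}, and the second by $C\|f\|_{L^\infty_T(X_\alpha)}\int_0^t ds/\sqrt{t-s} = 2C\sqrt{t}\,\|f\|_{L^\infty_T(X_\alpha)}$, using again the smoothing estimate from Theorem \ref{Thm-Well-Posedness-X-alpha}. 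Combining with the $X_\alpha$-bound of the first step gives \eqref{Est-WP-source-terms-1-0}, and continuity in $X_\alpha^1$ follows as before. The only nonroutine point I anticipate is justifying that the Duhamel convolution is well defined as a Bochner integral in the non-separable space $X_\alpha$ and that its holomorphic extension in $\Omega_\alpha$ is indeed the corresponding integral of the extensions; this is handled by approximating $f$ by simple functions valued in $X_\alpha$, applying the previous bounds to each approximant, and passing to the limit using strong continuity of $\mathbb{T}$ on both $X_\alpha$ and $X_\alpha^1$.
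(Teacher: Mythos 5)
Your proposal is correct and follows essentially the same route as the paper: Duhamel's formula combined with the bounds of Theorems \ref{Thm-Well-Posedness-X-alpha} and \ref{Thm-Well-Posedness-X-alpha-1}, the weighted convolution estimate for \eqref{Est-WP-source-terms-mieux}, the commutation of $\partial_j$ with the semigroup for \eqref{Est-WP-source-terms-1-0}, and dominated convergence for the time-continuity. The only cosmetic difference is that you evaluate the Beta integral exactly, whereas the paper simply bounds $\int_0^t (t-s)^{-1/2}\|f(s)\|_{X_\alpha}\,{\rm d}s$ by $C\|\sqrt{s}f(s)\|_{L^\infty_t(X_\alpha)}$.
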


\begin{proof}
	Let us start with the proof of \eqref{Est-WP-source-terms}. The solution $y$ of \eqref{Heat-Eq-Source-Terms} writes
	\begin{equation}
		\label{Duhamel}
		y(t ) = \T_t y_0 + \int_0^t \T_{t-s} f(s) \, {\rm d}s 
		, \qquad t \in [0,T].
	\end{equation}
	Accordingly, using Theorem \ref{Thm-Well-Posedness-X-alpha}, for all $t \in  [0,T]$,
	\begin{equation}
		\| y(t)\|_{X_\alpha}
		\leq 
		C\| y_0\|_{X_\alpha}
		+ 
		C \int_0^t \| f(s) \|_{X_\alpha} \, {\rm d}s.
		\label{Est-y-t-X-alpha-1st}
	\end{equation}
	The continuity of $y$ in $X_\alpha$ follows from a similar argument and the strong continuity of the heat semigroup on $X_\alpha$. Indeed, for $t_1$ and $t_2$ in $[0,T]$, setting $t_m = \min\{t_1, t_2\}$, $t_M = \max \{t_1,t_2\}$ and $\epsilon_{t_1,t_2} = 1$ if $t_1 > t_2$, and $\epsilon_{t_1,t_2} = -1$ if $t_1 < t_2$, we get 
	\begin{multline*}
		y(t_1) - y(t_2)
		 = 
		\epsilon_{t_1,t_2} \T_{t_m} ( \T_{t_M - t_m} -  \text{Id} ) y_0
		\\
		+
		\epsilon_{t_1,t_2} \int_0^{t_m} \T_{t_m-s}   ( \T_{t_M - t_m} -  \text{Id})  f(s) \, {\rm d}s
		+ 
		\epsilon_{t_1, t_2} \int_{t_m}^{t_M} \T_{t_M-s} f(s) \, {\rm d}s 
	\end{multline*}
	so that, using $t_M - t_m = |t_1 - t_2|$,
	\begin{multline*}
		\| y(t_1 ) - y(t_2)\|_{X_\alpha}
		\leq 
		C \| ( \T_{|t_1-t_2|} - \text{Id}) y_0\|_{X_\alpha}
		+ 
		C \int_0^{T} \|  ( \T_{|t_1-t_2|} -  \text{Id})  f(s)\|_{X_\alpha} {\rm d}s+ 
		C \int_{t_m}^{t_M} \| f(s)\|_{X_\alpha}\, {\rm d}s.
	\end{multline*}
	Lebesgue's dominated convergence theorem then implies that $y \in \mathcal{C}([0,T]; X_\alpha)$ for $f$ in  $L^1([0,T]; X_\alpha)$.

	To get estimate \eqref{Est-WP-source-terms-mieux}, we simply apply Theorem \ref{Thm-Well-Posedness-X-alpha} to the identity \eqref{Duhamel}: 
	\begin{align*}
		\| \nabla y(t, \cdot)\|_{X_\alpha}
		\leq
		\frac{C}{\sqrt{t}} \| y_0\|_{X_\alpha} 
		+ 
		C \int_0^t \frac{1}{\sqrt{t-s}} \| f(s) \|_{X_\alpha} \, {\rm d}s 
		\leq
		\frac{C}{\sqrt{t}} \| y_0\|_{X_\alpha} 
		+ 
		C \| \sqrt{s} f(s) \|_{L^{\infty}_{t}( X_\alpha)}.
	\end{align*}
	
	The proof of estimate \eqref{Est-WP-source-terms-1-0} can be done similarly, since for all $t \in (0,T)$,
	$$
		\| \nabla y(t, \cdot)\|_{X_\alpha}
		\leq
		C \| y_0\|_{X_\alpha^1} 
		+ 
		C \int_0^t \frac{1}{\sqrt{t-s}} \| f(s) \|_{X_\alpha} \, {\rm d}s 
		\leq
		C \| y_0\|_{X_\alpha^1} 
		+ 
		C \sqrt{T} \| f \|_{L^{\infty}_{T}( X_\alpha)}.
	$$
\end{proof}
\begin{remark}
	\label{Rk-Other-Time-Weights-on-f}
	If $\gamma \in [0,1)$, one can prove that if $t^{1-\gamma} f(t,\cdot) \in L^\infty(0,T; X_\alpha)$, then 
	\begin{equation}
	\label{Est-WP-source-terms-mieux-gamma}
		 \|\sqrt{t}\nabla_x y(t,\cdot) \|_{L^{\infty}_{T}(X_\alpha)}
		\leq
		C 
		\left(
			\|y_0\|_{X_\alpha}
			+
			T^{1-\gamma} \| t^\gamma f(t,\cdot) \|_{L^{\infty}_{T}(X_\alpha)}
		\right).
	\end{equation}
	instead of \eqref{Est-WP-source-terms-mieux}. In Theorem \ref{Thm-WP-with-source-terms}, we only considered the case $\gamma = 1/2$ because that choice allows to handle lower order terms involving gradient terms, as we will see in Theorem \ref{Thm-WP-linear-LOT} afterwards.
\end{remark}
\begin{remark}
	\label{Rk-A-compact-form}
	Note that 
	$$
		\| f \|_{L^{1}_{T}(X_\alpha)} 
		=
		\int_0^T \| f(s) \|_{X_\alpha} \, {\rm d}s
		\leq 
		C \sqrt{T} \| \sqrt{t} f \|_{L^{\infty}_{T}(X_\alpha)}, 
	$$
	so we get from  \eqref{Est-WP-source-terms}--\eqref{Est-WP-source-terms-mieux} that
	\begin{equation}
	\label{Est-WP-source-terms-mieux-compact}
		\| y \|_{L^{\infty}_{T}(X_\alpha)}
		+
		 \|\sqrt{t}\nabla_x y(t,\cdot) \|_{L^{\infty}_{T}(X_\alpha)}
		\leq
		C 
		\left(
			\|y_0\|_{X_\alpha(\R^d)}
			+
			\sqrt{T} \| \sqrt{t} f(t,\cdot) \|_{L^{\infty}_{T}(X_\alpha)}
		\right). 
	\end{equation}
\end{remark}

We are now in position to consider various lower terms.
\begin{theorem}
	\label{Thm-WP-linear-LOT}
	Let $d \geq 1$, $T>0$, and $\alpha >1$. 
	
	Let 
	\begin{equation}
		q \in L^{\infty} (0,T; X_\alpha), 
		\quad
		W \in L^{\infty}(0,T;X_\alpha). 
	\end{equation}
	Then there exists a constant $C>0$ such that for any $y_0 \in X_\alpha $ and $\sqrt{t} f \in L^\infty(0,T; X_\alpha)$, the solution $y$ of 
	\begin{equation}
		\label{Heat-Eq-LOT}
		\left\{
			\begin{array}{ll}
				\partial_t y - \Delta y + q y + W \cdot \nabla y
				 = f, \quad & \text{ in } (0,T) \times \R^d, 
				\\
				y(0, \cdot) = y_0, \quad & \text{ in } \R^d, 
			\end{array}
		\right.
	\end{equation}
	belongs to $\mathcal{C}([0,T]; X_\alpha)$, and satisfies, for some $C$ depending only on the time horizon $T$ and the norms $\| q\|_{L^{\infty}_{T}( X_\alpha)}$ and $\| W\|_{L^{\infty}_{T}(X_\alpha)}$, 
	\begin{equation}
		\label{Est-WP-y-with-potentials}
		\| y \|_{L^{\infty}_{T}(X_\alpha)} + \| \sqrt{t} \nabla_x y\|_{L^{\infty}_{T}(X_\alpha)}
		\leq 
		C \left( 
		\|y_0\|_{X_\alpha} 
		+
		\| \sqrt{t } f \|_{L^{\infty}_{T}(X_\alpha)}
		\right).
	\end{equation}
\end{theorem}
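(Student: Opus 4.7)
The plan is to set up a contraction mapping argument on a weighted space that matches the parabolic scaling. Introduce the Banach space
\begin{equation*}
Y_T := \enstq{y \in \mathcal{C}([0,T];X_\alpha)}{\sqrt{t}\,\nabla_x y \in L^\infty(0,T;X_\alpha)}, \quad \|y\|_{Y_T} := \|y\|_{L^\infty_T(X_\alpha)} + \|\sqrt{t}\,\nabla_x y\|_{L^\infty_T(X_\alpha)},
\end{equation*}
and consider the map
\begin{equation*}
\Phi(y)(t) := \T_t y_0 + \int_0^t \T_{t-s}\bigl(f(s) - q(s)\,y(s) - W(s)\cdot\nabla_x y(s)\bigr)\,\mathrm{d}s.
\end{equation*}
The first preliminary observation is that $X_\alpha$ is a Banach algebra: if $u,v \in X_\alpha$ with holomorphic extensions $u_e,v_e$ on $\overline\Omega_\alpha$, then $u_e v_e$ is again continuous on $\overline\Omega_\alpha$ and holomorphic on $\Omega_\alpha$, with $\|uv\|_{X_\alpha} \leq \|u\|_{X_\alpha}\|v\|_{X_\alpha}$. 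Consequently, $qy \in L^\infty_T(X_\alpha)$ and, for $y \in Y_T$, we have $\|W(t) \cdot \nabla_x y(t)\|_{X_\alpha} \leq C \|W\|_{L^\infty_T(X_\alpha)}/\sqrt{t}$, so that $F := f - qy - W\cdot\nabla_x y$ belongs to $L^1(0,T;X_\alpha)$ and $\sqrt{t}F \in L^\infty(0,T;X_\alpha)$.

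The second step is to apply the compact form \eqref{Est-WP-source-terms-mieux-compact} of Theorem \ref{Thm-WP-with-source-terms} to the source $F$, which yields
\begin{equation*}
\|\Phi(y)\|_{Y_T} \leq C\bigl(\|y_0\|_{X_\alpha} + \sqrt{T}\,\|\sqrt{t}\,F\|_{L^\infty_T(X_\alpha)}\bigr),
\end{equation*}
combined with the algebra estimates
\begin{equation*}
\|\sqrt{t}\,qy\|_{L^\infty_T(X_\alpha)} \leq \sqrt{T}\,\|q\|_{L^\infty_T(X_\alpha)}\|y\|_{L^\infty_T(X_\alpha)}, \qquad \|\sqrt{t}\,W\cdot\nabla_x y\|_{L^\infty_T(X_\alpha)} \leq \|W\|_{L^\infty_T(X_\alpha)}\|\sqrt{t}\,\nabla_x y\|_{L^\infty_T(X_\alpha)}.
\end{equation*}
Together, these give $\|\Phi(y)\|_{Y_T} \leq C_0\bigl(\|y_0\|_{X_\alpha} + \sqrt{T}\|\sqrt{t}f\|_{L^\infty_T(X_\alpha)}\bigr) + \bigl(C_0 T\,\|q\|_{L^\infty_T(X_\alpha)} + C_0\sqrt{T}\,\|W\|_{L^\infty_T(X_\alpha)}\bigr)\|y\|_{Y_T}$. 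The same computation applied to $\Phi(y_1)-\Phi(y_2)$, which is linear in $y_1 - y_2$ with identical structure, shows that for some $T^*>0$ depending only on $\|q\|_{L^\infty_T(X_\alpha)}$ and $\|W\|_{L^\infty_T(X_\alpha)}$, $\Phi$ is a contraction on $Y_{T^*}$. Banach's fixed point theorem then produces a unique solution on $[0,T^*]$, whose continuity in time in $X_\alpha$ follows from the continuity part of Theorem \ref{Thm-WP-with-source-terms}, since $F \in L^1(0,T^*;X_\alpha)$.

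To reach arbitrary $T$, I would iterate: the local existence time $T^*$ depends only on global norms of $q$ and $W$, so finitely many iterations starting successively from $y(T^*), y(2T^*),\dots \in X_\alpha$ cover $[0,T]$; gluing the local $Y$-estimates (together with a discrete Gronwall summation to absorb the factors coming from each restart) yields \eqref{Est-WP-y-with-potentials}. The main conceptual obstacle is the first-order term $W \cdot \nabla_x y$, for which the $1/\sqrt{t}$ singularity of $\nabla_x \T_t$ provided by Theorem \ref{Thm-Well-Posedness-X-alpha} is exactly counterbalanced by the $\sqrt{t}$ weight built into the norm of $Y_T$ and the corresponding weighted gradient estimate \eqref{Est-WP-source-terms-mieux}; this is precisely what makes the fixed-point argument close with a gain of $\sqrt{T}$ rather than only $T$. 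Once this structural choice is made, the rest of the argument is a standard Picard scheme and requires no further ingredient beyond the Banach algebra property of $X_\alpha$.
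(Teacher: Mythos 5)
Your proposal is correct and follows essentially the same route as the paper: a Picard iteration on the weighted space $\mathcal{C}([0,T_0];X_\alpha)$ with norm $\|y\|_{L^\infty_{T_0}(X_\alpha)}+\|\sqrt{t}\,\nabla_x y\|_{L^\infty_{T_0}(X_\alpha)}$, closed via the estimate \eqref{Est-WP-source-terms-mieux-compact}, the multiplicativity of the $X_\alpha$-norm, and the gain $C\sqrt{T_0}\bigl(\sqrt{T_0}\|q\|+\|W\|\bigr)$ for the contraction, followed by iteration in time. Your explicit remark that $X_\alpha$ is a Banach algebra is used implicitly in the paper's proof as well, so there is no substantive difference.
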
	
	
\begin{proof}
	We construct the solution $y$ using a fixed point argument in a time interval $(0,T_0)$, where $T_0 \in (0,T]$ will be chosen suitably small at the end. 
	
	We set 
	$$
		\mathscr{C}(T_0) = 
		\enstq{ 
			y \in \mathcal{C}([0,T_0]; X_\alpha)}{\sqrt{t} \nabla_x y \in L^\infty(0,T_0; X_\alpha)},
	$$
	and we define the map
	$$
		\Lambda_{T_0} : \widehat y \in \mathscr{C}(T_0) 
		\mapsto 
		y \text{ solution of }
		\left\{
			\begin{array}{ll}
				\partial_t y - \Delta y + q \widehat y + W \cdot \nabla \widehat y
				 = f, \quad & \text{ in } (0, T_0) \times \R^d, 
				\\
				y(0, \cdot) = y_0, \quad & \text{ in } \R^d.
			\end{array}
		\right.
	$$
	First, from Theorem \ref{Thm-WP-with-source-terms}, it is clear that $\Lambda_{T_0}$ maps $\mathscr{C}(T_0)$ to itself when $q \in L^\infty(0,T; X_\alpha)$ and $W \in L^\infty(0,T; X_\alpha)$.
	
	Second, for $\widehat y_1$ and $\widehat y_2$ in $\mathscr{C}(T_0)$, $y_1 = \Lambda_{T_0} (\widehat y_1)$ and $y_2 = \Lambda_{T_0} (\widehat y_2)$ satisfy (using \eqref{Est-WP-source-terms-mieux-compact}) 
	\begin{align*}
		& \| (y_1 - y_2) \|_{L^\infty_{T_0}(X_\alpha)} +  \| \sqrt{t} \nabla (y_1 - y_2)\|_{L^\infty_{T_0}(X_\alpha)}
		\\
		& \leq 
		C \sqrt{T_0} \| \sqrt{t}  q  (\widehat y_1 - \widehat y_2) \|_{L^\infty_{T_0}(X_\alpha)}
		+ 
		C \sqrt{T_0} \| \sqrt{t} W \cdot \nabla (\widehat y_1 - \widehat y_2)\|_{L^\infty_{T_0}(X_\alpha)}
		\\
		& \leq 
		C \sqrt{T_0}
		\left(
			\sqrt{T_0} \| q\|_{L^{\infty}_{T_0}(X_\alpha)} + \| W \|_{L^{\infty}_{T_0}(X_\alpha)}
		\right)\left(
			\|\widehat y_1 - \widehat y_2 \|_{L^{\infty}_{T_0}(X_\alpha)}
			+
			 \| \sqrt{t} \nabla_x (\widehat y_1 - \widehat y_2)\|_{L^{\infty}_{T_0}(X_\alpha)}
		\right).
	\end{align*}
	Accordingly, choosing $T_0$ small enough such that 
	\begin{equation}
		\label{Cond-T-0}
		C \sqrt{T_0} 
		\left(
			\sqrt{T_0} \| q\|_{L^{\infty}_{T_0}(X_\alpha)} + \| W \|_{L^{\infty}_{T_0}(X_\alpha)}
		\right)
		\leq \frac{1}{2},
	\end{equation}
	the map $\Lambda_{T_0}$ is contractive on $\mathscr{C}(T_0)$ endowed with the norm $\|\cdot \|_{L^{\infty}_{T_0}(X_\alpha)} +  \| \sqrt{t} \nabla_x (\cdot )\|_{L^{\infty}_{T_0}(X_\alpha)}$. It thus has a unique minimizer $y \in \mathscr{C}(T_0)$ which by construction solves \eqref{Heat-Eq-LOT}. We also check that estimate \eqref{Est-WP-source-terms-mieux} gives 
	\begin{multline*}
		\|y \|_{L^\infty_{T_0}(X_\alpha)} + 	 \| \sqrt{t} \nabla y\|_{L^\infty_{T_0}(X_\alpha)}
		\leq
		 C \| y_0 \|_{X_\alpha} + C\sqrt{T_0} \|\sqrt{t} f \|_{L^\infty_{T_0}(X_\alpha)} 
		\\
		+ C \sqrt{T_0}	\left(
			\sqrt{T_0} \| q\|_{L^{\infty}_{T_0}(X_\alpha)} + \| W \|_{L^{\infty}_{T_0}(X_\alpha)}
		\right)
		\left(	\|y \|_{L^{\infty}_{T_0}(X_\alpha)}
			+
			 \| \sqrt{t} \nabla_x y\|_{L^{\infty}_{T_0}(X_\alpha)}
		\right).
	\end{multline*}
	Using \eqref{Cond-T-0}, we deduce 
	\begin{equation}
		\label{Estimate-on-0-T0}
		\|y \|_{L^\infty_{T_0}(X_\alpha)} +  \| \sqrt{t} \nabla_x y\|_{L^{\infty}_{T_0}(X_\alpha)} 
		\leq C \| y_0 \|_{X_\alpha} + C \| \sqrt{t} f \|_{L^\infty_{T_0}(X_\alpha)}. 
	\end{equation}
	We can then iterate this process to construct the solution $y$ on $[0,T]$ by cutting it in intervals of time of length $T_0$. The number of iteration depends on $T_0$, defined in \eqref{Cond-T-0}, and one gets an estimate on the solution $y$ of \eqref{Heat-Eq-LOT} on each of these intervals similar to the one in \eqref{Estimate-on-0-T0} (shifted in time). Accordingly, the estimate \eqref{Est-WP-y-with-potentials} follows.
	%
	%
\end{proof}	

\begin{remark}\label{Rk-WP-BUC}	
	Note that, since the proof of Theorem {\ref{Thm-WP-linear-LOT}} is based on the analyticity of the heat semigroup $\T = (\T_t)_{t \geq 0}$ on $X_\alpha$ and the fact that the {operator $q+W\cdot\nabla$} maps $X_\alpha^1$ to $X_\alpha$, it can be adapted to several other contexts. In particular, for later use, let us point out that the above proof of Theorem {\ref{Thm-WP-linear-LOT}} can be easily adapted to get the following result. 
	
	{If $q$ and $W$ both belong to $L^\infty(0,T; \BUC(\R^d))$,  $y_0 \in \BUC^1(\R^d)$, and $f \in L^\infty(0,T; \BUC(\R^d))$, then
    there exists a unique solution $y$ of \eqref{Heat-Eq-LOT} such that $y \in \mathcal{C}([0,T]; \BUC^1(\R^d))$ with 
	$$
\| y \|_{L^{\infty}_{T}(W^{1,\infty})} 
		\leq 
		C
		\left(\|y_0\|_{W^{1,\infty}} + \|f  \|_{L^{\infty}_{T}(L^\infty)}\right).
	$$
   } 

\end{remark}

We now consider a fully semilinear heat equation with a nonlinear term $(t,x) \mapsto g(t,x, y(t,x), \nabla_x y(t,x))$ for some function $g$ depending on $(t,x, s, s_d) \in [0,T] \times \R^d \times \overline B_\C (\varepsilon) \times \overline B_{\C^d} (\varepsilon)$ for some $\varepsilon>0$.

\begin{theorem}
	\label{Thm-Semilinear-HE}
	Let $d \geq 1$, $T>0$ and $\alpha >1$. 

	Let $\varepsilon>0$ and 
	$$
		g: (t,x, s, s_d) \in [0,T] \times (\R^d \cup \overline\Omega_\alpha) \times \overline B_\C (\varepsilon) \times \overline B_{\C^d} (\varepsilon)\longmapsto g(t,x,s,s_d)\in\C.
	$$
	satisfying the following conditions:
	\begin{equation}
		\label{g-in-BUC}
		g \in L^\infty(0,T; \BUC((\R^d \cup\overline\Omega_\alpha)\times \overline B_\C (\varepsilon) \times  \overline B_{\C^d} (\varepsilon))))
	\end{equation}
	\begin{equation}
		\label{Holomorphic-Condition-g}
		\text{a.e. in } t, 
		\, 
		g(t, \cdot, \cdot,\cdot) \text{ is holomorphic in }\Omega_\alpha \times \overline B_\C(\varepsilon)\times \overline B_{\C^d}(\varepsilon), 
	\end{equation}
	\begin{equation}
		\label{g-Lipschitz}
		g \in L^\infty([0,T] \times (\R^d \cup \overline\Omega_\alpha); W^{1, \infty} (\overline B_\C (\varepsilon) \times  \overline B_{\C^d} (\varepsilon))).
	\end{equation}
	\begin{equation}
		\label{g-for-s=0}
		\forall (t,x) \in [0,T] \times (\R^d \cup \overline\Omega_\alpha), \quad 
		g(t,x, 0_\C, 0_{\C^d}) = 0,
	\end{equation}
	
	%

	Then for all $T >0$, there exists $\delta >0$ and $C>0$ such that for any initial datum $y_0 \in X_\alpha^1$ and $f \in L^{\infty}(0,T;X_\alpha)$ satisfying $\| y_0 \|_{X_\alpha^1}+ \| f  \|_{L^{\infty}_{T}(X_\alpha)}\leq \delta$, there exists a solution $y$ of 
	\begin{equation}
		\label{Heat-Eq-Non-Linear-semilinear}
		\left\{
			\begin{array}{ll}
				\partial_t y - \Delta y
				= 
				g(\cdot,\cdot,y, \nabla y) + f, \quad & \text{ in } (0, T) \times \R^d, 
				\\
				y(0, \cdot) = y_0, \quad & \text{ in } \R^d,
			\end{array}
		\right.
	\end{equation}
	in $\mathcal{C}([0,T]; X_\alpha^1)$ such that 
	\begin{equation}
		\label{Est-y-semi-Linear}
		\| y \|_{L^{\infty}_{T}(X_\alpha^1)} 
		\leq 
		C
		\left(\|y_0\|_{X_\alpha^1} + \|f  \|_{L^{\infty}_{T}(X_\alpha)}\right).
	\end{equation}
\end{theorem}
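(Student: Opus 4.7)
The plan is a standard Banach fixed point argument applied to the Duhamel formulation
\[
\Psi(\widehat y)(t) := \T_t y_0 + \int_0^t \T_{t-s} \bigl[ g(s, \cdot, \widehat y(s,\cdot), \nabla_x \widehat y(s,\cdot)) + f(s) \bigr] \, {\rm d} s, \qquad t \in [0, T_0],
\]
relying on the linear estimate \eqref{Est-WP-source-terms-1-0} of Theorem \ref{Thm-WP-with-source-terms} in $X_\alpha^1$, first on a short interval $[0, T_0]$ and then iterated over successive slices in order to cover the prescribed interval $[0,T]$.

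The key preliminary step is to show that, for $\widehat y$ in a ball $B_R := \{ \widehat y \in \mathcal{C}([0,T_0]; X_\alpha^1) \ :\  \| \widehat y\|_{L^\infty_{T_0}(X_\alpha^1)} \leq R\}$ with $R \leq \varepsilon$, the composition $(t,x) \mapsto g(t,x, \widehat y(t,x), \nabla_x \widehat y(t,x))$ defines an element of $L^\infty(0, T_0; X_\alpha)$ with
\[
\| g(\cdot, \cdot, \widehat y, \nabla \widehat y) \|_{L^\infty_{T_0}(X_\alpha)} \leq C_L \| \widehat y \|_{L^\infty_{T_0}(X_\alpha^1)},
\]
\[
\| g(\cdot, \cdot, \widehat y_1, \nabla \widehat y_1) - g(\cdot, \cdot, \widehat y_2, \nabla \widehat y_2)\|_{L^\infty_{T_0}(X_\alpha)} \leq C_L \| \widehat y_1 - \widehat y_2\|_{L^\infty_{T_0}(X_\alpha^1)},
\]
where $C_L$ depends only on the $W^{1,\infty}$-norm of $g$ from \eqref{g-Lipschitz}. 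The pointwise bound $|g(t,x,s,s_d)| \leq L(|s|+|s_d|)$, consequence of \eqref{g-Lipschitz} and \eqref{g-for-s=0}, provides the $L^\infty$ control on $\R^d$ and on $\overline\Omega_\alpha$; the holomorphy of the extension on $\Omega_\alpha$ follows from composing the holomorphic extensions of $\widehat y$ and of the $\partial_j \widehat y$ (whose ranges lie in $\overline B_\C(\varepsilon)$ and $\overline B_{\C^d}(\varepsilon)$ by the constraint $R \leq \varepsilon$) with $g(t, \cdot, \cdot, \cdot)$, holomorphic on $\Omega_\alpha \times \overline B_\C(\varepsilon) \times \overline B_{\C^d}(\varepsilon)$ by \eqref{Holomorphic-Condition-g}; continuity on $\overline\Omega_\alpha$ and $\BUC$ regularity on $\R^d$ come from \eqref{g-in-BUC} combined with the $\BUC$ character of $\widehat y$ and $\nabla \widehat y$.

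Feeding these bounds into \eqref{Est-WP-source-terms-1-0} yields, for $\widehat y, \widehat y_1, \widehat y_2 \in B_R$,
\[
\| \Psi(\widehat y) \|_{L^\infty_{T_0}(X_\alpha^1)} \leq C \bigl( \|y_0\|_{X_\alpha^1} + \sqrt{T_0}\, C_L R + \sqrt{T_0}\, \|f\|_{L^\infty_{T_0}(X_\alpha)} \bigr),
\]
\[
\| \Psi(\widehat y_1) - \Psi(\widehat y_2)\|_{L^\infty_{T_0}(X_\alpha^1)} \leq C \sqrt{T_0}\, C_L \| \widehat y_1 - \widehat y_2\|_{L^\infty_{T_0}(X_\alpha^1)}.
\]
Choosing $T_0$ so that $C\sqrt{T_0}\, C_L \leq 1/2$ (this fixes $T_0$ in terms of $g$ only), and then $R := 4C(\|y_0\|_{X_\alpha^1} + \|f\|_{L^\infty_T(X_\alpha)})$, the map $\Psi$ becomes a contraction of $B_R$ into itself as soon as $R \leq \varepsilon$, providing via Banach's theorem a unique solution $y$ of \eqref{Heat-Eq-Non-Linear-semilinear} on $[0, T_0]$ satisfying $\|y\|_{L^\infty_{T_0}(X_\alpha^1)} \leq 4C\,(\|y_0\|_{X_\alpha^1} + \|f\|_{L^\infty_T(X_\alpha)})$. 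One then iterates this construction on $[T_0, 2T_0], [2T_0, 3T_0], \ldots$ with the obvious choice of initial datum. At step $k$, the bound $\|y(kT_0)\|_{X_\alpha^1} \leq A^k\,(\|y_0\|_{X_\alpha^1} + \|f\|_{L^\infty_T(X_\alpha)})$ holds for some $A \geq 1$ depending only on $g$; after $N = \lceil T/T_0 \rceil$ steps, imposing $\delta$ so small that $A^N \delta \leq \varepsilon/(4C)$ guarantees that the smallness condition needed to define the composition with $g$ is preserved throughout, and gluing the pieces yields a solution $y \in \mathcal{C}([0,T]; X_\alpha^1)$ together with the estimate \eqref{Est-y-semi-Linear} with $C = C(T, L)$.

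The main obstacle is the composition step: verifying that $g(\cdot, \cdot, \widehat y, \nabla \widehat y)$ genuinely belongs to $X_\alpha$ is slightly subtle because $\nabla \widehat y$ is to be understood through its extension to $\overline\Omega_\alpha$ in the sense of the space $X_\alpha^1$, and not through the complex derivative of the extension of $\widehat y$ (even though, as noted after Theorem \ref{Thm-Well-Posedness-X-alpha}, the two objects must ultimately coincide by the identity principle). Once this point, together with the Lipschitz-type control coming from \eqref{g-Lipschitz}--\eqref{g-for-s=0}, is established, the rest of the argument is the standard iteration of the contraction mapping principle, with the fact that $T$ is fixed and possibly large absorbed into the smallness requirement on $\delta$.
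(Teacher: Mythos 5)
Your proposal is correct and follows essentially the same route as the paper: a Banach fixed point argument for the Duhamel map on a short interval $[0,T_0]$ with $T_0$ fixed by the Lipschitz constant of $g$, using the linear smoothing estimate \eqref{Est-WP-source-terms-1-0} in $X_\alpha^1$ and the bounds on the composition $g(\cdot,\cdot,\widehat y,\nabla\widehat y)$ coming from \eqref{g-in-BUC}--\eqref{g-for-s=0}, followed by iteration over time slices with $\delta$ shrunk geometrically to keep the state in the ball where $g$ is defined. Your explicit discussion of why the composition lands in $X_\alpha$ (holomorphy of the composed extensions, with $\nabla\widehat y$ understood via its $X_\alpha^1$ extension) is a point the paper leaves implicit, but the overall argument is the same.
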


\begin{proof}
	For $R \in (0, \varepsilon)$ and $T_0\in (0,T]$, we set 
	$$
		\mathscr{C}_{R,T_0}^1 
		:= 
		\enstq{y \in L^{\infty}(0,T_0;X_{\alpha}^{1})}{\| y \|_{L^{\infty}_{T_0}(X_\alpha^1)}
 \leq R}, 
	$$
	and the map 
	$$
		\Lambda_{R,T_0} : 
		\widehat y \in  \mathscr{C}_{R,T_0}^1
		\mapsto
		y \text{ solution of }
		\left\{
			\begin{array}{ll}
				\partial_t y - \Delta y = g(\cdot,\cdot, \widehat y , \nabla \widehat y) +f , \quad & \text{ in } (0, T_0) \times \R^d, 
				\\
				y(0, \cdot) = y_0, \quad & \text{ in } \R^d.
			\end{array}
		\right.
	$$
	According to \eqref{g-in-BUC}--\eqref{g-for-s=0}, for $	\widehat y \in  \mathscr{C}_{R,T_0}^1$, we have that  $g(\cdot,\cdot, \widehat y , \nabla \widehat y) \in L^\infty(0,T_0; X_\alpha)$ with (due to \eqref{g-Lipschitz}--\eqref{g-for-s=0})
	$$
		\| g(\cdot,\cdot, \widehat y , \nabla \widehat y) \|_{L^\infty_{T_0}(X_\alpha)}
		\leq C_0 R.
	$$
	where we set 
	\begin{equation}
		\label{Lipschitz-g}
		C_0 := \|\nabla_{(s,s^d)} g\|_{L^{\infty}((0,T)\times(\R^d\cup\Omega_\alpha)\times B_{\C}(\varepsilon)\times B_{\C^d}(\varepsilon))}<+\infty.
	\end{equation}
	
	Using then estimate \eqref{Est-WP-source-terms-1-0}, we get a constant $C>0$ independent of $R$ and $T_0$ such that, for $\widehat y \in  \mathscr{C}_{R,T_0}^1$, 
	$$
		\| \Lambda_{R,T_0} (\widehat y)\|_{L^\infty_{T_0}(X_\alpha^1)} \leq C \left( \| y_0 \|_{X_\alpha^1} + \sqrt{T_0} \| f \|_{L^\infty_{T_0}(X_\alpha)} + \sqrt{T_0} C_0 R \right), 
	$$
	and similarly, if $\widehat y_1, \widehat y_2 \in  \mathscr{C}_{R,T_0}^1$, 	%
	$$
		\| \Lambda_{R,T_0} (\widehat y_1) - \Lambda_{R,T_0} (\widehat y_1)\|_{L^\infty_{T_0}(X_\alpha^1)} 
		\leq C \sqrt{T_0} \|  g(\cdot,\cdot, \widehat y_2 , \nabla \widehat y_2) -  g(\cdot,\cdot, \widehat y_1 , \nabla \widehat y_1)\|_{L^\infty_{T_0}(X_\alpha)}
		\leq
		C \sqrt{T_0} C_0 \| \widehat y_1 - \widehat y_2 \|_{L^\infty_{T_0}(X_\alpha^1)}.
	$$
	Accordingly, for $\delta \in (0, \varepsilon/(2C)]$, we choose $T_0$ and $R$ as follows:
	\begin{equation}
		\label{Choices-T-R}
		T_0 = \min\left\{1, \frac{1}{4 C^2 C_0^2} \right\}, 
		\quad \text{ and } \quad 
		R = 2 C \delta, 
	\end{equation}
	so that if $y_0 \in X_\alpha^1$ and $f \in L^{\infty}(0,{T_0};X_\alpha)$ satisfies $\| y_0 \|_{X_\alpha^1}+ \| f  \|_{L^{\infty}_{T_0}(X_\alpha)}\leq \delta$, the map $\Lambda_{R,T_0}$ maps $ \mathscr{C}_{R,T_0}^1$ into itself and is contractive, and the existence of a fixed point $y \in  \mathscr{C}_{R,T_0}^1$ is granted by Banach-Picard theorem. 
	
	We have thus obtained the existence of a solution $y$ of \eqref{Heat-Eq-Non-Linear-semilinear} for data $y_0 \in X_\alpha^1$ and $f \in L^{\infty}(0,{T_0};X_\alpha)$ satisfying $\| y_0 \|_{X_\alpha^1}+ \| f  \|_{L^{\infty}_{T_0}(X_\alpha)}\leq \varepsilon/(2C)$ locally in time, that is on $(0,T_0)$ with $T_0 = \min\{1, 1/(4 C^2 C_0^2) \}$ (given by \eqref{Choices-T-R}), and 
	$$
		\| y \|_{L^\infty_{T_0}(X_\alpha^1)} \leq 2C \left( \| y_0 \|_{X_\alpha^1}+ \| f  \|_{L^{\infty}_{T_0}(X_\alpha)}\right), 
	$$
	and in particular 
	$$
		\| y(T_0,\cdot) \|_{X_\alpha^1} \leq 2C \left( \| y_0 \|_{X_\alpha^1}+ \| f  \|_{L^{\infty}_{T_0}(X_\alpha)}\right)
	$$	
	
	To get a result for an arbitrary large time, we should iterate this construction. For $T >0$, set $n = \lfloor T/T_0 \rfloor$. It is easy to check that if 
	$$
		\| y_0 \|_{X_\alpha^1}+ \| f  \|_{L^{\infty}_{T}(X_\alpha)} \leq \frac{\varepsilon}{(2C)^{n+1}}, 
	$$
	we can iterate the above construction on each interval of the form $[jT_0, (j+1)T_0]$, with $j  \in \{0, \cdots, n\}$, and get 
	$$
		\| y \|_{L^\infty((jT_0, (j+1)T_0); X_\alpha^1)}
		+
		\| y((j+1) T_0,\cdot) \|_{X_\alpha^1} \leq (2C)^{j+1} \left( \| y_0 \|_{X_\alpha^1}+ \| f  \|_{L^{\infty}_{T}(X_\alpha)}\right)
		\leq 
		(2C)^{j-n}\varepsilon \leq \varepsilon.	
	$$
	This concludes the proof of Theorem \ref{Thm-Semilinear-HE}.
\end{proof}

\begin{remark}[Uniqueness of solutions of \eqref{Thm-Semilinear-HE}]
	\label{Rem-Uniqueness}
	For later use, let us point out that, under the assumptions of \eqref{Thm-Semilinear-HE}, for any $y_0 \in W^{1, \infty}(\R^d)$ and $f \in L^\infty(0,T;L^\infty(\R^d))$, there exists only one solution $y \in L^\infty(0,T;W^{1, \infty}(\R^d))$ with $\| y \|_{L^\infty_T(W^{1, \infty})} \leq \varepsilon$ to the equation \eqref{Heat-Eq-Non-Linear-semilinear}. 
	
	Indeed, if there are two solutions $y_1$ and $y_2$ of  \eqref{Heat-Eq-Non-Linear-semilinear} with $\| y_1 \|_{L^\infty_T(W^{1, \infty})} \leq \varepsilon$ and $\| y_2 \|_{L^\infty_T(W^{1, \infty})} \leq \varepsilon$, $z = y_2 - y_1$ satisfies 
	\begin{equation}
		\label{Heat-Eq-Non-Linear-semilinear-z}
		\left\{
			\begin{array}{ll}
				\partial_t z - \Delta z
				= 
				h
				, \quad & \text{ in } (0, T) \times \R^d, 
				\\
				z(0, \cdot) = 0, \quad & \text{ in } \R^d, 
			\end{array}
		\right.
	\end{equation}
	where we have set 
	$$
		h 
		= 
		g(\cdot,\cdot,y_2, \nabla y_2) 
			- 
		g(\cdot,\cdot,y_1, \nabla y_1). 
	$$
	Accordingly, using the explicit knowledge of the heat semigroup as the convolution with the Gaussian kernel in $\R^d$ and the Lipschitz property of $g$ in \eqref{g-Lipschitz}, we get that for all $t \in (0,T)$,
	\begin{align*}
		\| z(t)\|_{W^{1, \infty}(\R^d)} 
		& \leq 
		\int_0^t\left( \| G(t-s) \|_{L^1(\R^d)} + \| \nabla G(t-s)\|_{L^1(\R^d)} \right) \|   h(s) \|_{L^\infty(\R^d)} \, {\rm d}s
		\\
		&\leq C \int_0^t \left(1 + \frac{1}{\sqrt{t-s}} \right) \| z(s) \|_{W^{1, \infty}(\R^d)} \, {\rm d}s.
	\end{align*}
	Since $z(0) = 0$ and $z$ belongs to $\mathcal{C}([0,T]; W^{1,\infty}(\R^d))$, we can conclude easily that $z$ vanishes everywhere. Indeed, if $z(t) = 0$ for all $t \in [0,T_*]$ ($T_*$ can be chosen to be $0$ in this argument), the above estimate yields that for $t > T_*$, 
	$$
		\sup_{s \in [T_*, t]} \|z(s) \|_{W^{1, \infty}(\R^d)} \leq 
		C \sqrt{t - T_*} \sup_{s \in [T_*, t]} \|z(s) \|_{W^{1, \infty}(\R^d)}, 
	$$
	so that, setting $T' = \frac{1}{4C^2}$, we get that $z$ vanishes in $[0, T_* + T']$. Iterating this argument easily shows that $z$ vanishes everywhere. 
\end{remark}

\section{Null-controllability of heat-type equations in $X_\alpha$ and $X_\alpha^1$}\label{Sec-NC-in-X-alpha}

The goal of this section is to present several results of the null-controllability of heat-type equations in the space $X_\alpha$ and $X_\alpha^1$.

\subsection{Preliminaries: Local regularity properties for the heat equation} 

	We start by recalling the following classical estimates for the heat equation, which we prove below for completeness: 
	
	\begin{proposition}[See \cite{TheThreeDimensionalNavierStokesEquationsClassicalTheory} Theorem D.6, p. 397 and Theorem D.7, p. 399]
		\label{Prop-Classical-Estimates}
	Let us consider the equation 
	 \begin{equation}
	   \label{Heat-Eq-R-d}
     \left\{
			\begin{array}{ll}
				\partial_t y - \Delta y
				 = h, \quad & \text{ in } (0, T) \times \R^d, 
				\\
				y(0,\cdot) = 0 \quad & \text{ in } \R^d,
			\end{array}
		\right.
	 \end{equation} 
	 for some $h \in L^p(0,T; L^p(\R^d))$, $p \in [1, \infty]$.
	\begin{enumerate}
		\item if $p \in [1, d+2)$, then there exists $C_{p,T}$ such that the solution $y$ of \eqref{Heat-Eq-R-d} satisfies $y \in L^q(0,T; W^{1,q}(\R^d))$ with 
		\begin{equation}
			\label{Estimee-Lp-vers-Lq}
			\norm{y}_{L^q_T(W^{1,q})} \leq C_{p,T} \norm{h }_{L^p_T(L^p)}, 
			\quad \text{ where $q$ is defined by } 
			\frac{1}{q}  = \frac{1}{p} - \frac{1}{d+2}.
		\end{equation}  
		\item  if $p \in (d+2,\infty]$, then there exists $C_{p,T}$ such that the solution $y$ of \eqref{Heat-Eq-R-d} satisfies $y \in L^\infty(0,T;W^{1,\infty}(\R^d))$ with 
		\begin{equation}
			\label{Estimee-Linfty-vers-Linfty}
			\norm{y}_{L^\infty_T(W^{1,\infty})} \leq C_{p,T} \norm{h }_{L^p_T(L^p)}.
		\end{equation}  
      
      \item furthermore, if $p=\infty$, then $y \in L^{\infty}(0,T;\BUC^{1}(\R^d))$.
		
	\end{enumerate}
	\end{proposition}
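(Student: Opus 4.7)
The plan is to work directly from Duhamel's formula
\begin{equation*}
y(t,\cdot) = \int_0^t G_d(t-s) \ast h(s,\cdot)\,{\rm d}s,
\qquad
\nabla y(t,\cdot) = \int_0^t \nabla G_d(t-s) \ast h(s,\cdot)\,{\rm d}s,
\end{equation*}
combined with the elementary kernel bounds, valid for every $r \in [1,\infty]$ and $\sigma > 0$,
\begin{equation*}
\|G_d(\sigma)\|_{L^r(\R^d)} \leq C_r \sigma^{-\frac{d}{2}(1-\frac{1}{r})},
\qquad
\|\nabla G_d(\sigma)\|_{L^r(\R^d)} \leq C_r \sigma^{-\frac{d}{2}(1-\frac{1}{r})-\frac{1}{2}},
\end{equation*}
which follow directly from the explicit Gaussian form of $G_d$. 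Everything else is an exercise in choosing the right Young/Hölder exponents.

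For the first assertion, I would apply Young's convolution inequality in space with the exponent $r$ determined by $1+\tfrac{1}{q}=\tfrac{1}{r}+\tfrac{1}{p}$; given the relation $\tfrac{1}{q}=\tfrac{1}{p}-\tfrac{1}{d+2}$ this forces $\tfrac{1}{r}=\tfrac{d+1}{d+2}$, and yields
\begin{equation*}
\|\nabla y(t,\cdot)\|_{L^q(\R^d)}
\leq
C \int_0^t (t-s)^{-\frac{d+1}{d+2}} \|h(s,\cdot)\|_{L^p(\R^d)}\,{\rm d}s,
\end{equation*}
together with a strictly more integrable analogue for $\|y(t,\cdot)\|_{L^q}$. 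The right-hand side is a one-dimensional Riesz potential of order $\alpha=\tfrac{1}{d+2}$ applied to the scalar function $s\mapsto\|h(s,\cdot)\|_{L^p}$, truncated to $(0,T)$, so the Hardy--Littlewood--Sobolev inequality for $p \in (1,d+2)$, complemented at the endpoint $p=1$ by its Lorentz-space refinement, produces the announced estimate.

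For the second assertion with $p \in (d+2,\infty]$, Young in space with $r=p'$ delivers
\begin{equation*}
\|\nabla y(t,\cdot)\|_{L^\infty(\R^d)}
\leq
C \int_0^t (t-s)^{-\frac{d}{2p}-\frac{1}{2}} \|h(s,\cdot)\|_{L^p(\R^d)}\,{\rm d}s,
\end{equation*}
and H\"older's inequality in time is then legitimate precisely when $\bigl(\tfrac{d}{2p}+\tfrac{1}{2}\bigr)p' < 1$, a condition equivalent to $p>d+2$; the endpoint $p=\infty$ is trivial from $\|\nabla G_d(\sigma)\|_{L^1}\leq C\sigma^{-1/2}$. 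The bound on $y$ itself follows in the same way with a milder time-integrability exponent. Finally, for the $\BUC$ conclusion in the case $p=\infty$, I would exploit the $L^1$-translation continuity of $G_d$ and $\nabla G_d$: for any shift $\delta\in\R^d$, the pointwise bound
\begin{equation*}
\|\nabla G_d(\sigma,\cdot+\delta) - \nabla G_d(\sigma,\cdot)\|_{L^1(\R^d)}
\leq
\min\bigl(C\sigma^{-1/2},\ C|\delta|\sigma^{-1}\bigr)
\end{equation*}
(the second estimate via the mean-value theorem and $\|\nabla^2 G_d(\sigma)\|_{L^1}\leq C\sigma^{-1}$) integrated over $s\in(0,t)$ and split at $t-s=\delta^2$ produces a modulus of continuity for $\nabla y$ that tends to zero as $|\delta|\to 0$ uniformly in $t\in[0,T]$; the same reasoning yields uniform continuity of $y$ itself. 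The main technical obstacle is keeping track of the various scaling exponents and cleanly handling the endpoint $p=1$ of part 1 through Lorentz spaces; once the two kernel estimates displayed above are in hand, the rest is essentially bookkeeping with Young, HLS and H\"older inequalities.
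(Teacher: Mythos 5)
Your proof follows essentially the same route as the paper's: Duhamel's formula combined with the $L^p\to L^q$ Gaussian smoothing bounds (which are exactly Young's inequality applied to your two kernel estimates), Hardy--Littlewood--Sobolev in time for item 1, H\"older in time under the condition $\bigl(\tfrac{d}{2p}+\tfrac12\bigr)p'<1\Leftrightarrow p>d+2$ for item 2, and the identical translation-continuity argument for item 3, including the bound $\min\bigl(C\sigma^{-1/2},C|\delta|\sigma^{-1}\bigr)$ and the split of the time integral at $t-s=|\delta|^2$. The only differences are cosmetic: you phrase the smoothing through Young's inequality on the kernel rather than through operator norms of the semigroup, and you explicitly flag the $p=1$ endpoint of HLS (where only a weak-type bound holds), a point the paper's proof passes over silently.
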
 
	
	\begin{proof}[Proof of Proposition \ref{Prop-Classical-Estimates}]
		The solution $y$ of  \eqref{Heat-Eq-R-d} is given by 
		$$
			y(t) = \int_0^t \T_{t-s} h(s) \ {\rm d}s, \quad t \in [0,T], 
		$$
		where $(\T_t)_{t\geq 0}$ is the heat semigroup, simply given by the convolution in space with the heat kernel, and which satisfies, for $1 \leq p \leq q \leq \infty$, for all $t \geq 0$, 
		$$
			t^{\frac{d}{2} \left(\frac{1}{p} - \frac{1}{q} \right)}  \norm{ \T_t}_{\mathscr{L}(L^p, L^q)} 
			+
			t^{ \frac{1}{2} +\frac{d}{2} \left(\frac{1}{p} - \frac{1}{q} \right)} \norm{ \T_t}_{\mathscr{L}(L^p, L^q)} 
			\leq C.
		$$
		
		{\it Proof of Item $1$: $p \in [1, d+2)$.}  For all $t \in [0,T]$, we have
		\begin{align*}
			\| y(t) \|_{L^q} + \| \nabla y(t) \|_{L^q} 
			&
			\leq 
			\int_0^t 
			\left(
			 |t-s|^{-\frac{d}{2} \left(\frac{1}{p} - \frac{1}{q} \right)} 
			+ 
			 |t-s|^{- \frac{1}{2} -\frac{d}{2} \left(\frac{1}{p} - \frac{1}{q} \right)} 
			\right)
			\| h(s) \|_{L^p} \, {\rm d}s
			\\ 
			&
			\leq 
			C_T \int_0^t
			 |t-s|^{- \frac{1}{2} -\frac{d}{2} \left(\frac{1}{p} - \frac{1}{q} \right)} 
			\| h(s) \|_{L^p} \, {\rm d}s
			\\
			&\leq 
			C_T \int_0^t
			 |t-s|^{- 1 +  \frac{1}{d+2}} 
			\| h(s) \|_{L^p} \, {\rm d}s.			
		\end{align*}
		Using the Hardy-Littlewood-Sobolev inequality
        , we deduce \eqref{Estimee-Lp-vers-Lq}.\\
		
		{\it Proof of Item $2$:
        $p > d+2$.} For all $t \in [0,T]$, we have
			\begin{align*}
			\| y(t) \|_{L^\infty} + \| \nabla y(t) \|_{L^\infty} 
			&
			\leq 
			\int_0^t 
			\left(
			 |t-s|^{-\frac{d}{2p}  } 
			+ 
			 |t-s|^{- \frac{1}{2} -\frac{d}{2p} } 
			\right)
			\| h(s) \|_{L^p} \, {\rm d}s
			\\ 
			&
			\leq 
			C_T \int_0^t
			 |t-s|^{- \frac{1}{2} -\frac{d}{2p}} 
			\| h(s) \|_{L^p} \, {\rm d}s.
		\end{align*}
		Since $ \frac{1}{2} +\frac{d}{2p} < \frac{1}{p'} $ for $p > d +2$, we deduce \eqref{Estimee-Linfty-vers-Linfty} by the {H\"older} inequality.\\

		{\it Proof of Item $3$: $p=\infty$.} From the previous case, we already have that $y \in L^\infty(0,T;W^{1,\infty}(\R^d))$. Therefore, it only remains to check that $\nabla y$ belongs to $L^\infty(0,T; \BUC(\R^d))$. For $t \in (0,T]$ and $x_1, x_2 \in \R^d$, we have 
		\begin{equation*}
			\left| \nabla y(t,x_1) - \nabla y(t, x_2) \right|
			\leq 
		 	\int_0^t \int_{\R^d} \left| \nabla G(t-s ,x_1 - x_0) - \nabla G(t-s,x_2 -x_0) \right| \, {\rm d}x_0 {\rm d}s \| h \|_{L^\infty_T(L^\infty)}. 
		\end{equation*}
		We then estimate, for $\tau >0$,
		$$
			\int_{\R^d} \left| \nabla G(\tau ,x_1 - x_0) - \nabla G(\tau,x_2 -x_0) \right| \, {\rm d}x_0
			\leq
			|x_1 - x_2| \int_{\R^d} \left|D^2G(\tau, x_0)  \right| \, {\rm d}x_0
			\leq C\frac{|x_1-x_2|}{\tau}.
		$$
		Accordingly, separating the integral on $(0,t)$ in an integral on $(0,(t- |x_1-x_2|^2)_+)$ and $((t- |x_1-x_2|^2)_+),t)$, we obtain, for all $t \in (0,T]$, 
		\begin{align*}
		&\int_0^t \int_{\R^d} \left| \nabla G(t-s ,x_1 - x_0) - \nabla G(t-s,x_2 -x_0) \right| \, {\rm d}x_0 {\rm d}s
		\\&\leq 
		\int_0^{(t- |x_1-x_2|^2)_+} \frac{C |x_1-x_2|}{t-s} \, {\rm d}s + \int_{(t- |x_1-x_2|^2)_+}^t \frac{C}{\sqrt{t-s}} \, {\rm d}s 
		\leq C |x_1 - x_2| \left( 1+ \log\left(1+ \frac{T}{|x_1-x_2|^2}  \right) \right).
		\end{align*}
		This of course implies that $\nabla y \in L^\infty(0,T;\BUC(\R^d))$ for a source term $h \in L^\infty(0,T;L^\infty(\R^d))$, and concludes the proof of Proposition \ref{Prop-Classical-Estimates}.
	\end{proof}

\subsection{Preliminaries: Null-controllability of the heat equation in the $L^2$ setting}

Let us recall the results regarding the null-controllability of the heat equation. Since we will need estimates later in Theorem \ref{Thm-NC-BUC}, we choose to recall Carleman estimates for the heat equation, which will allow to handle potentials and semi-linear terms. While we could have chosen to follow the approach in \cite{FursikovImanuvilov}, it seems clearer to us to follow the approach in \cite{BEG}, which presents a Carleman estimate with a weight function which does not blow up close to the time $t = 0$. 

In this section, $T>0$, $\Omega$ is a smooth bounded domain of $\R^d$ and $\omega$ is a non-empty open subset of $\Omega$. For $\omega_0$ a non-empty open subset of $\omega$ with $\overline\omega_0 \subset \omega$, we choose a smooth (at least $\mathcal{C}^2$) function $\psi$ such that 
\begin{equation}
	\label{Psi}
	\psi := \psi(t,x) \quad \hbox{ such that}
	\left\{
		\begin{array}{ll}
			& \forall x \in \overline\Omega,\,   \psi(x)  \in [6,7],
			\\
			& \forall x \in \partial\Omega, \, \partial_{n}  \psi(x) \leq 0,
			\\
			 & \psi_{|\partial \Omega}  \text{ is constant, and }  \psi_{|\partial \Omega} = \inf_{\Omega} \psi, 
			 \\
			 & \inf_{\Omega \setminus { \omega_0 } }\{|\nabla \psi| \} >0.
		\end{array}
	\right.
\end{equation}
We then set $T_0>0$ and $T_1>0$ such that $T_1 \leq 1/4$ and $T_0+2 T_1 < T$ and choose a weight function in time $ \theta_{\mu}(t)$ depending on the parameter $\mu\geq 2$ defined by
\begin{equation}
	\label{ThetaMu}
	\theta_{\mu } = \theta_{ \mu}(t)
	 \hbox{ such that}
		\left\{
			\begin{array}{l}
			\ds \forall t \in [0,T_0],\, \theta_{\mu}(t) = 1+ \left( 1- \frac{t}{T_0} \right)^\mu,
			\smallskip
			\\
			\ds \forall t \in [T_0, T- 2T_1], \, \theta_{\mu}(t) = 1,
			\smallskip
			\\
			\ds \forall t \in [T-T_1,T), \,  \theta_{\mu}(t) = \frac{1}{(T-t)},
			\smallskip
			\\
			\ds \theta_{\mu} \hbox{ is increasing on } [T-2T_1, T-T_1],
			\smallskip
			\\
			\ds \theta_{\mu} \in \mathcal{C}^2([0,T)).
			\end{array}
		\right.
\end{equation}
For simplicity of notations in the following we omit the dependence on $\mu$ and we simply write $\theta$ instead of $\theta_{\mu}$.
We will then take the following weight functions $\varphi = \varphi(t,x)$ and $\xi = \xi(t,x)$:
\begin{equation}
	\label{Phi-Xi}
	\varphi(t,x) = \theta(t) \left(\lambda e^{12 \lambda}- \exp(\lambda \psi(t,x)) \right), \quad \xi(t,x) := \theta (t) \exp(\lambda \psi(t,x)),
\end{equation}
where  $s,\, \lambda$ are positive parameters with $s\geq 1$, $\lambda\geq 1$ and $\mu$ is chosen as
\begin{equation}
	\label{Def-mu}
	\mu = s \lambda^2 e^{2 \lambda},
\end{equation}
which is always bigger than $2$, thus being compatible with the condition $\theta \in \mathcal{C}^2([0,T))$. 

Remark that, due to the definition of $\psi$ in \eqref{Psi}, we have that, for all $\beta \in (0,1)$, there exists $\lambda_{0,\beta}$ such that for all $\lambda \geq \lambda_{0,\beta}$ and $(t,x) \in (0,T) \times \Omega$,
\begin{equation}
	\label{Phi-bounds}
	\beta \theta(t) \lambda e^{ 12 \lambda } \leq \varphi(t,x) \leq \theta(t) \lambda e^{12 \lambda}.
\end{equation}

We are now in position to recall the Carleman estimate obtained in \cite{BEG}.

\begin{theorem}[{\cite[Theorem 2.5]{BEG}}]
	\label{CarlemanThm}
	Under the above setting, there exist constants $C_0>0$, $s_0\geq 1$ and $\lambda_0\geq 1$ such that for all smooth functions
$z$ on $(0,T)\times \Omega$ satisfying $z = 0$ on $(0,T)\times \partial\Omega$, for all $s \geq s_0$, $\lambda \geq \lambda_0$, we have
	\begin{multline}
		\label{CarlemanEst}
			  \int_\Omega |\nabla z(0)|^2 e^{-2 s \varphi(0)} {\rm d}x + s^2 \lambda^3 e^{14 \lambda} \int_\Omega |z(0)|^2 e^{-2 s \varphi(0)} {\rm d}x
		  \\
			  + s \lambda^2 \iint_{(0,T) \times \Omega}  \xi |\nabla z|^2  e^{-2 s \varphi} {\rm d}x {\rm d}t
			 \ds + s^3 \lambda^4  \iint_{(0,T) \times \Omega} \xi^3 |z|^2 e^{-2 s \varphi} 	{\rm d}x {\rm d}t
		\\
			  \leq C_{0} \iint_{(0,T)\times \Omega} |(-  \partial_t - \Delta) z|^2 e^{-2 s \varphi} {\rm d}x {\rm d}t
			 + C_{0} s^3 \lambda^4 \iint_{(0,T) \times \omega} \xi^3 |z|^2 e^{-2 s \varphi} {\rm d}x {\rm d}t.
	\end{multline}
\end{theorem}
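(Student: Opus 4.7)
The plan is to follow the Fursikov--Imanuvilov conjugation method, adapted along the lines of BEG so that the bounded-at-$t=0$ weight $\theta$ produces initial trace terms on the left-hand side rather than requiring them to be absorbed by dissipation. I would set $w := z e^{-s\varphi}$ and compute the conjugated operator $Lw := e^{-s\varphi}(-\partial_t - \Delta)(e^{s\varphi} w)$, splitting it as $L = L_1 + L_2 + R$ with $L_1$ formally self-adjoint (containing $-\Delta w$ and $-s^{2}|\nabla\varphi|^{2} w$), $L_2$ formally skew-adjoint (containing $-\partial_t w$ and $-2s\nabla\varphi \cdot \nabla w$ up to a first-order adjustment), and a lower order remainder $R$ to be absorbed. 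Squaring and integrating, one obtains
\[
   \|L_1 w\|^{2} + \|L_2 w\|^{2} + 2\langle L_1 w, L_2 w\rangle \leq 2\|e^{-s\varphi}(-\partial_t - \Delta) z\|^{2} + 2\|Rw\|^{2},
\]
and the heart of the argument is the expansion of the cross term.

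Expanding $\langle L_1 w, L_2 w\rangle$ by integration by parts in $t$ and $x$ produces commutator-type quantities whose positive interior contribution, using the explicit form \eqref{Phi-Xi} of $\varphi,\xi$ and the scaling $\mu = s\lambda^{2}e^{2\lambda}$, yields the dominant weighted norms $s\lambda^{2}\xi|\nabla w|^{2}$ and $s^{3}\lambda^{4}\xi^{3}|w|^{2}$ over $(0,T)\times\Omega$. Three families of boundary terms then need to be handled: the spatial boundary terms on $(0,T)\times\partial\Omega$ are of favorable sign because $z|_{\partial\Omega}=0$, $\psi|_{\partial\Omega}$ is constant (so $\nabla\psi$ is normal) and $\partial_n \psi\leq 0$; the temporal boundary at $t=T$ is harmless since $e^{-2s\varphi}\to 0$ exponentially as $t\to T$ (because $\theta \to +\infty$); and the essential $t=0$ contribution, since $\theta(0)$ is finite, leaves behind precisely the initial surface integrals of $|\nabla w(0)|^{2}$ and $|w(0)|^{2}$, with the specific weights $1$ and $s^{2}\lambda^{3}e^{14\lambda}$ arising from the explicit powers of $s\xi(0)$ generated during the integration by parts.

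To localize the interior weighted norms to $\omega$, I would split $\Omega = (\Omega\setminus\omega_0) \cup \omega_0$: on $\Omega\setminus\omega_0$ the lower bound $|\nabla\psi|\geq\delta>0$ guarantees the full estimate, whereas on $\omega_0$ one reabsorbs by means of a cutoff $\chi \in C^\infty_c(\omega)$ with $\chi\equiv 1$ on $\omega_0$: integrating $\chi^{2}\xi^{3}|w|^{2}e^{-2s\varphi}$ by parts twice in $x$ and paying the usual $s,\lambda$ prices delivers the observation term $s^{3}\lambda^{4}\iint_{(0,T)\times\omega}\xi^{3}|z|^{2}e^{-2s\varphi}$. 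Returning to $z = w e^{s\varphi}$ and choosing $s\geq s_0$ and $\lambda\geq \lambda_0$ large enough to absorb $\|Rw\|^{2}$ together with all cross-term errors yields \eqref{CarlemanEst}. The main obstacle will be the precise bookkeeping at $t=0$: each $\partial_t$-integration by parts contributes an initial surface term that must be tracked, summed with the correct sign, and shown to combine into exactly the advertised weights $1$ and $s^{2}\lambda^{3}e^{14\lambda}$ on the initial traces, a step in which the choice $\mu=s\lambda^{2}e^{2\lambda}$ and the exponential structure of $\varphi$ play a decisive role.
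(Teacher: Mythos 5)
The paper does not actually prove this statement: Theorem \ref{CarlemanThm} is imported verbatim from \cite[Theorem 2.5]{BEG}, so there is no internal proof to compare yours against. Your outline is the standard Fursikov--Imanuvilov conjugation scheme ($w=ze^{-s\varphi}$, splitting $L=L_1+L_2+R$, expansion of the cross term, favorable sign of the spatial boundary terms via $\partial_n\psi\leq 0$ and $\psi|_{\partial\Omega}$ constant in \eqref{Psi}, localization through $\inf_{\Omega\setminus\omega_0}|\nabla\psi|>0$ and a cutoff supported in $\omega$), and this is indeed the architecture used in the cited reference, so the overall plan is sound.

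However, the sketch stops exactly where the specific content of this variant begins. The only feature distinguishing \eqref{CarlemanEst} from the classical estimate of \cite{FursikovImanuvilov} is the presence of the initial traces $\int_\Omega|\nabla z(0)|^2e^{-2s\varphi(0)}\,{\rm d}x$ and $s^2\lambda^3e^{14\lambda}\int_\Omega|z(0)|^2e^{-2s\varphi(0)}\,{\rm d}x$ on the left, made possible by the fact that $\theta$ in \eqref{ThetaMu} stays bounded as $t\to0^+$. Your assertion that the $t=0$ temporal boundary ``leaves behind precisely'' these integrals is, as literally stated, sign-incorrect: the boundary term at $t=0$ produced by the time integration by parts in the cross term $\langle -s^2|\nabla\varphi|^2w,\,-\partial_t w\rangle$ equals $-\tfrac{s^2}{2}\int_\Omega|\nabla\varphi(0)|^2|w(0)|^2\,{\rm d}x$, which enters the left-hand side with the \emph{wrong} sign and must be dominated, not kept. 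Writing $\varphi=\theta(t)\Phi(x)$, the positive contribution near $t=0$ comes instead from the bulk term $-\tfrac{s^2}{2}\iint\partial_t(|\nabla\varphi|^2)|w|^2=-s^2\iint\theta\theta'|\nabla\Phi|^2|w|^2\geq 0$ on $[0,T_0]$, where $\theta'=-\tfrac{\mu}{T_0}(1-t/T_0)^{\mu-1}\leq 0$, and it is the precise choice $\mu=s\lambda^2e^{2\lambda}$ in \eqref{Def-mu} that makes this term large enough to yield, after a further (e.g.\ energy-type) argument, the traces at $t=0$ with the advertised weights $1$ and $s^2\lambda^3e^{14\lambda}$. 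Since this is exactly the bookkeeping your proposal defers as ``the main obstacle,'' what you have is a correct plan for re-deriving \cite[Theorem 2.5]{BEG} rather than a proof; the paper itself sidesteps the issue entirely by citation.
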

For $\beta \in (0,1)$, we take $\lambda_\beta = \max \{ \lambda_0, \lambda_{0,\beta}\}$, we can bound $\varphi$ by $\theta$ from below and from above by a constant depending on $\lambda$. Accordingly for all $\beta \in (0,1)$, there exist a constant $C>0$ such that 
for all smooth functions
$z$ on $(0,T)\times \Omega$ satisfying $z = 0$ on $(0,T)\times \partial\Omega$, for all $s \geq s_0$, we have
	\begin{multline}
		\label{CarlemanEst-2}
			  \int_\Omega |\nabla z(0)|^2 e^{-2 s \varphi(0)}  {\rm d}x + s^2  \int_\Omega |z(0)|^2 e^{-2 s \varphi(0)} {\rm d}x 
		  \\
			  + s  \iint_{(0,T) \times \Omega}  \theta |\nabla z|^2  e^{-2 s \varphi} {\rm d}x {\rm d}t
			 \ds + s^3  \iint_{(0,T) \times \Omega} \theta^3 |z|^2 e^{-2 s \varphi} 	{\rm d}x {\rm d}t
		\\
			  \leq C \iint_{(0,T)\times \Omega} |(-  \partial_t - \Delta) z|^2 e^{-2 s \varphi} {\rm d}x {\rm d}t
			 + C s^3 \iint_{(0,T) \times \omega} \theta^3 |z|^2 e^{-2 s \varphi} {\rm d}x {\rm d}t,
	\end{multline}
	where $\varphi$ and $\theta$ are the ones given in \eqref{ThetaMu} and \eqref{Phi-Xi} with $\lambda = \lambda_\beta$.

By duality and basic Hilbertian estimates for the heat equation, one obtains the following result (see \cite[Theorem 2.6]{BEG}, where this is proved for $y_0 = 0$; the case $y_0 \in L^2(\Omega)$ can be done similarly and is left to the reader):
\begin{theorem}[{\cite[Theorem 2.6]{BEG}}]
	\label{Thm-Est-Y-Carl-Norms}
	Under the above setting, there exist positive constants $C>0$ and $s_0 \geq 1$ such that for all $s \geq s_0$, for all $f$ satisfying
	\begin{equation}
		\label{Conditions-f-F}
		\iint_{(0,T) \times \Omega} \theta^{-3} |f|^2 e^{2 s \varphi} {\rm d}x {\rm d}t < \infty,
	\end{equation}
	and $y_0 \in L^2(\Omega)$, 
	there exists a solution $(Y,H)$ of the control problem 
	\begin{equation}
		\left\{
			\begin{array}{ll}
				\partial_t Y - \Delta Y = \mathbf{1}_{\omega}  H + f, \quad & \text{ in } (0,T) \times \Omega, 
				\\
				Y = 0, \quad & \text{ on } (0,T) \times \partial \Omega, 
				\\ 
				Y(0,\cdot) = y_0, \quad Y(T,\cdot) = 0 & \text { in } \Omega,
			\end{array}
		\right.
	\end{equation}
	which furthermore satisfies the following estimate:
	\begin{multline}
	\label{Est-Y-Gal}
		s^3   \iint_{(0,T)\times\Omega}  |Y|^2 e^{2 s \varphi}{\rm d}x {\rm d}t+ \iint_{\widehat{\omega_T}} \theta^{-3} |H|^2 e^{2 s \varphi}{\rm d}x {\rm d}t
		+
		s \iint_{(0,T)\times\Omega} \theta^{-2} |\nabla Y|^2 e^{2 s \varphi}{\rm d}x {\rm d}t+
	\\
		\leq
		C \iint_{(0,T)\times\Omega} \theta^{-3} |f|^2 e^{2 s \varphi}{\rm d}x {\rm d}t
		+ 
		C s   \int_\Omega |y_0|^2 e^{2 s \varphi(0)} {\rm d}x.
	\end{multline}
	where $\varphi$ and $\theta$ are the ones given in \eqref{ThetaMu} and \eqref{Phi-Xi} with $\lambda = \lambda_\beta$.
\end{theorem}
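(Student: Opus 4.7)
The plan is to follow the classical Fursikov-Imanuvilov duality construction, adapting it to accommodate the $L^2(\Omega)$ initial datum $y_0$: all the hard analytical content is already encoded in the Carleman inequality \eqref{CarlemanEst-2}, and what remains is to dualize it via Riesz representation on a carefully chosen weighted Hilbert space.

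First, I would introduce the pre-Hilbert space $P$ of smooth functions $z$ on $[0,T] \times \overline \Omega$ vanishing on $(0,T) \times \partial \Omega$, endowed with the scalar product
\begin{equation*}
\langle z_1, z_2\rangle_P := \iint_{(0,T)\times \Omega}(-\partial_t - \Delta) z_1\,(-\partial_t - \Delta) z_2\, e^{-2s\varphi}\, {\rm d}x{\rm d}t + s^3 \iint_{(0,T)\times \omega} \theta^3\, z_1 z_2\, e^{-2s\varphi}\, {\rm d}x{\rm d}t,
\end{equation*}
which is indeed a scalar product thanks to \eqref{CarlemanEst-2}. I would then verify, using Cauchy-Schwarz together with the bounds on $s^3\iint \theta^3|z|^2 e^{-2s\varphi}$ and on $s^2\int |z(0)|^2 e^{-2s\varphi(0)}$ provided by \eqref{CarlemanEst-2}, that the linear form
\begin{equation*}
L(z) := \iint_{(0,T)\times \Omega} f z\, {\rm d}x{\rm d}t + \int_\Omega y_0\, z(0, \cdot)\, {\rm d}x
\end{equation*}
extends continuously to the completion $\overline P$ with
\begin{equation*}
\|L\|_{\overline P^*}^2 \leq C s^{-3}\iint \theta^{-3}|f|^2 e^{2s\varphi}\,{\rm d}x{\rm d}t + Cs^{-2}\int_\Omega |y_0|^2 e^{2s\varphi(0)}\,{\rm d}x.
\end{equation*}
Riesz representation then produces a unique $\hat z \in \overline P$ such that $\langle \hat z, z\rangle_P = L(z)$ for all $z \in P$ and $\|\hat z\|_P = \|L\|_{\overline P^*}$.

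The candidate state-control pair is then $Y := e^{-2s\varphi} (-\partial_t - \Delta) \hat z$ and $H := -s^3 \theta^3 e^{-2s\varphi} \hat z\, \mathbf{1}_\omega$. Formal integration by parts in the identity $\langle \hat z, z\rangle_P = L(z)$ tested against functions with arbitrary traces at $t = 0$ and $t = T$ shows that $(Y,H)$ satisfies $\partial_t Y - \Delta Y = \mathbf{1}_\omega H + f$ together with the initial condition $Y(0, \cdot) = y_0$, the zero terminal condition $Y(T, \cdot) = 0$, and zero lateral Dirichlet data. Unfolding the definitions yields
\begin{equation*}
s^3\iint e^{2s\varphi}|Y|^2\,{\rm d}x{\rm d}t + \iint_\omega \theta^{-3} e^{2s\varphi}|H|^2\,{\rm d}x{\rm d}t = s^3\|\hat z\|_P^2,
\end{equation*}
which, combined with the above bound on $\|\hat z\|_P$, reproduces the first two terms on the left-hand side of \eqref{Est-Y-Gal} with exactly the right-hand side appearing there.

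The delicate remaining point is the gradient estimate $s\iint \theta^{-2}|\nabla Y|^2 e^{2s\varphi}$, which is not directly encoded in the bilinear form. I would recover it via a standard weighted energy estimate on the PDE satisfied by $Y$: multiplying it by $\theta^{-2} e^{2s\varphi} Y$, integrating by parts in space, and controlling the commutators coming from $\partial_t(\theta^{-2}e^{2s\varphi})$ and $\nabla(\theta^{-2}e^{2s\varphi})$ through the already-obtained weighted $L^2$ bound on $Y$ and on $H$. The whole construction then has to be validated by first approximating $f$ and $y_0$ by smooth compactly supported data, applying classical parabolic regularity so that the above formal integrations by parts are rigorous, and finally passing to the limit using the uniform weighted estimates, which is routine given the linearity of the problem.
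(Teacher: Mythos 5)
Your proposal is correct and follows precisely the route the paper itself indicates (the result is quoted from \cite{BEG} with the remark that it is obtained ``by duality and basic Hilbertian estimates for the heat equation''): the weighted Riesz-representation construction built on the Carleman inequality \eqref{CarlemanEst-2} yields $Y$ and $H$ with the stated $L^2$ bounds, and the gradient term is recovered by the weighted energy estimate exactly as you describe. The handling of the nonzero initial datum through the extra term $\int_\Omega y_0\, z(0,\cdot)\,{\rm d}x$ in the linear form, controlled by the $s^2\int_\Omega |z(0)|^2 e^{-2s\varphi(0)}$ term of the Carleman estimate, is also the intended adaptation the authors leave to the reader.
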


Now, for $M>0$, we will consider potentials $q$ and $W$ in the class 
 \begin{equation}
	\label{Assumptions-Ref-q-W-NC}
	q \in L^{\infty} (0,T; L^\infty(\Omega)), 
		\quad
		W  \in L^{\infty}(0,T; (L^\infty( \Omega))^d) 
		\quad\text{with }
		\| q \|_{L^{\infty}_T(L^\infty(\Omega))}  + \| W\|_{L^{\infty}_T(L^\infty(\Omega))} \leq M.
\end{equation}
Taking $s \ggg M^2$, a straightforward fixed point argument proves the following corollary:

\begin{corollary}
	\label{Cor-Est-Y-Carl-Norms}
	Under the above setting, for all $M>0$, there exist positive constants $C>0$ and $s_0 \geq 1$ such that for all $s \geq s_0$, for all $f$ satisfying \eqref{Conditions-f-F}, for all $y_0 \in L^2(\Omega)$, and for all $(q,W) \in L^\infty(0,T; L^\infty(\Omega))$ satisfying \eqref{Assumptions-Ref-q-W-NC}, 
	there exists a solution $(Y,H)$ of the control problem 
	\begin{equation}
		\label{NC-Heat-Eq-L2-potentials}
		\left\{
			\begin{array}{ll}
				\partial_t Y - \Delta Y + q Y + W \cdot \nabla Y = \mathbf{1}_{\omega}  H + f, \quad & \text{ in } (0,T) \times \Omega, 
				\\
				Y = 0, \quad & \text{ on } (0,T) \times \partial \Omega, 
				\\ 
				Y( 0,\cdot) = y_0, \quad Y(T,\cdot)= 0 & \text { in } \Omega,
			\end{array}
		\right.
	\end{equation}
	which furthermore satisfies the following estimate:
	\begin{multline}
	\label{Est-Y-Gal-q-W}
		s^3   \iint_{(0,T)\times\Omega}  |Y|^2 e^{2 s \varphi}{\rm d}x {\rm d}t+ \iint_{\widehat{\omega_T}} \theta^{-3} |H|^2 e^{2 s \varphi}{\rm d}x {\rm d}t
		+
		s \iint_{(0,T)\times\Omega} \theta^{-2} |\nabla Y|^2 e^{2 s \varphi}{\rm d}x {\rm d}t+
	\\
		\leq
		C \iint_{(0,T)\times\Omega} \theta^{-3} |f|^2 e^{2 s \varphi}{\rm d}x {\rm d}t
		+ 
		C s   \int_\Omega |y_0|^2 e^{2 s \varphi(0)}{\rm d}x.
	\end{multline}
	where $\varphi$ and $\theta$ are the ones given in \eqref{ThetaMu} and \eqref{Phi-Xi} with $\lambda = \lambda_\beta$.
	Besides, the map $(y_0, f)\mapsto (Y, H)$ is linear.
\end{corollary}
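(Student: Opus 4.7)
The plan is a standard fixed-point argument built on top of Theorem \ref{Thm-Est-Y-Carl-Norms}, exploiting the fact that the weighted norms on the left-hand side of \eqref{Est-Y-Gal} carry factors of $s^{3}$ and $s$ which, for $s$ large, can absorb the $L^{\infty}$ bounds on $(q,W)$. The idea is to fix $\widehat Y$ in an appropriate weighted space, apply Theorem \ref{Thm-Est-Y-Carl-Norms} to solve the potential-free control problem with modified source $f-q\widehat Y-W\cdot\nabla\widehat Y$, and show that the resulting map is a strict contraction.

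For the setup, I introduce the Banach space $E$ of functions $Y$ on $(0,T)\times\Omega$ equipped with
\[
\|Y\|_{E}^{2} := s^{3}\iint_{(0,T)\times\Omega}|Y|^{2}e^{2s\varphi}\,{\rm d}x{\rm d}t + s\iint_{(0,T)\times\Omega}\theta^{-2}|\nabla Y|^{2}e^{2s\varphi}\,{\rm d}x{\rm d}t,
\]
which is precisely the part of the left-hand side of \eqref{Est-Y-Gal} depending on $Y$. I then define $\Phi\colon \widehat Y\in E\mapsto Y$ where $(Y,H)$ is the solution provided by Theorem \ref{Thm-Est-Y-Carl-Norms} for the initial datum $y_{0}$ and source $f-q\widehat Y-W\cdot\nabla \widehat Y$. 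Using $\theta\geq 1$ on $[0,T)$ (which holds by construction \eqref{ThetaMu}) together with \eqref{Assumptions-Ref-q-W-NC}, I obtain
\[
\iint\theta^{-3}|q\widehat Y+W\cdot\nabla\widehat Y|^{2}e^{2s\varphi}\,{\rm d}x{\rm d}t \leq 2M^{2}\iint(|\widehat Y|^{2}+\theta^{-2}|\nabla\widehat Y|^{2})e^{2s\varphi}\,{\rm d}x{\rm d}t \leq \frac{2M^{2}}{s}\|\widehat Y\|_{E}^{2},
\]
which, combined with \eqref{Est-Y-Gal}, shows both that $\Phi(E)\subset E$ and, by linearity of Theorem \ref{Thm-Est-Y-Carl-Norms} applied to the difference of two iterates (for which $y_0 = 0$), that $\|\Phi(\widehat Y_{1})-\Phi(\widehat Y_{2})\|_{E}^{2}\leq (2CM^{2}/s)\|\widehat Y_{1}-\widehat Y_{2}\|_{E}^{2}$. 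Enforcing $s\geq s_{0}$ with $2CM^{2}/s_{0}\leq 1/2$ makes $\Phi$ a strict contraction on $E$.

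Banach's fixed-point theorem then provides a unique $Y\in E$ with $Y=\Phi(Y)$, and the corresponding control $H$ from Theorem \ref{Thm-Est-Y-Carl-Norms} gives a pair $(Y,H)$ solving \eqref{NC-Heat-Eq-L2-potentials}. The estimate \eqref{Est-Y-Gal-q-W} follows by applying \eqref{Est-Y-Gal} to this final pair and absorbing the contribution of the potential terms into the left-hand side, again using the factor $1/s$ and the choice $s\ggg M^{2}$. Linearity of $(y_{0},f)\mapsto (Y,H)$ is a direct consequence of the linearity of the equation together with the uniqueness of the fixed point in $E$.

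The only non-cosmetic obstacle is verifying that the weighted norm encoding $Y$ on the left-hand side of \eqref{Est-Y-Gal} also dominates the perturbation $\theta^{-3}|q\widehat Y+W\cdot\nabla\widehat Y|^{2}e^{2s\varphi}$; this is precisely what makes the Carleman weights compatible with the lower-order perturbations, and here the uniform lower bound $\theta\geq 1$ from \eqref{ThetaMu} is crucial, since it allows $\theta^{-3}\leq \theta^{-2}\leq 1$ to be invoked to handle the gradient-type term $W\cdot\nabla\widehat Y$.
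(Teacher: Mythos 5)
Your proposal is correct and is precisely the argument the paper has in mind: the paper itself only says ``Taking $s \ggg M^2$, a straightforward fixed point argument proves the following corollary,'' and your contraction on the weighted space $E$, with the perturbation $q\widehat Y + W\cdot\nabla\widehat Y$ absorbed via $\theta\geq 1$ and the factors $s^{3}$ and $s$ in the left-hand side of \eqref{Est-Y-Gal}, is exactly that argument carried out in detail. The only point worth noting is that the claimed linearity of $(y_0,f)\mapsto(Y,H)$ does rest on the (implicit) linearity of the solution operator of Theorem \ref{Thm-Est-Y-Carl-Norms}, which you correctly invoke when estimating the difference of two iterates.
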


\subsection{Null-controllability of the heat equation in $\BUC$}

In this section, we consider the null-controllability problem for the heat equation in $\R^d$ when the control acts in 
$$
	\omega = \R^d \setminus \overline B_{\R^d} (2).
$$
with controls in $L^\infty(0,T;\BUC(\omega))$, initial datum in $\BUC^1(\R^d)$ and source terms in $L^\infty(0,T;\BUC(\R^d))$.

To be more precise, we prove the following result : 

\begin{theorem}
	\label{Thm-NC-BUC}
	Let $M>0$ and $\beta \in (0,1)$ and set %
	$$
		\forall t \in [0, T), \quad \Phi(t) = s \lambda e^{12 \lambda} \theta(t),  
		\qquad \text{ with } \lambda = \lambda_\beta, \, \text{ $s = s_0$ as in Corollary \ref{Cor-Est-Y-Carl-Norms}},\text{ and } \theta \text{ as in \eqref{ThetaMu}}.
	$$

	Then, for all $\gamma \in (0, \beta)$ there exists $C>0$ such that 
	for all $y_0 \in \BUC^1(\R^d)$, for all $f \in L^\infty(0,T;\BUC(\R^d))$ satisfying 
	$$
		\| f e^{\Phi} \|_{L^\infty_T(L^\infty)} < \infty,
	$$
	and for all $(q,W) \in L^\infty(0,T;\BUC(\R^d))$ satisfying 
	\begin{equation}
	\label{Assumptions-Ref-q-W-NC-Global}
		\| q \|_{L^{\infty}_T(L^\infty)}  + \| W\|_{L^{\infty}_T(L^\infty)} 	\leq M.
	\end{equation}
	there exist a control function $h $ with $ \mathbf{1}_{\omega} h \in L^\infty(0,T; \BUC(\R^d))$ and a controlled trajectory {$y \in \mathcal{C}([0,T];\BUC^1(\R^d))$} solving 
	\begin{equation}
		\label{NC-Heat-Eq-LOT-local}
		\left\{
			\begin{array}{ll}
				\partial_t y - \Delta y + q y + W\cdot \nabla y
				 = \mathbf{1}_{\omega} h + f , \quad & \text{ in } (0, T) \times \R^d, 
				\\
				y(0, \cdot) = y_0, \quad & \text{ in } \R^d.
			\end{array}
		\right.
	\end{equation}
and satisfying  
	\begin{equation}
		\label{NC-Goal}
		y{( T,\cdot)} = 0 \quad \text{ in } \R^d, 
	\end{equation}
	with the estimate
	\begin{equation}
		\label{Est-for-fixed-Points}
		\| y e^{\gamma \Phi} \|_{L^\infty_T(W^{1,\infty})}
		+
		\| \mathbf{1}_{\omega}  h e^{\gamma \Phi} \|_{L^\infty_T(L^\infty)}
		\leq 
		C \left( 
			\| f e^{\Phi} \|_{L^\infty_T(L^\infty)} 
			+ 
			\| y_0 \|_{W^{1,\infty}}
		\right).
	\end{equation}
	Besides, the map $(y_0, f)\mapsto (y, h)$ is linear.
\end{theorem}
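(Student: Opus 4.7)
\emph{Plan.} The strategy is to reduce Theorem \ref{Thm-NC-BUC} to the $L^2$ Carleman-based null-controllability result of Corollary \ref{Cor-Est-Y-Carl-Norms} on a bounded domain, combined with two additional ingredients: a cutoff argument to pass from the bounded domain back to $\R^d$ (which works because $\omega = \R^d \setminus \overline B_{\R^d}(2)$ is the complement of a compact set, so any correction supported outside a slightly larger ball automatically lies in $\omega$) and a parabolic regularity bootstrap based on Proposition \ref{Prop-Classical-Estimates} to pass from weighted $L^2$ Carleman estimates to weighted $W^{1, \infty}$ estimates. Linearity of $(y_0, f) \mapsto (y, h)$ will be preserved throughout.

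\emph{Cutoff and gluing.} Fix $R > 2$ (say $R = 4$), set $\Omega' = B_{\R^d}(R)$, pick $\omega_0$ with $\overline{\omega_0} \subset \omega \cap \Omega'$, and choose a smooth cutoff $\chi$ equal to $1$ near $\overline B_{\R^d}(2)$ with $\mathrm{supp}(\chi) \subset \Omega'$, so that both $\mathrm{supp}(\nabla\chi)$ and $\mathrm{supp}(1-\chi)$ are contained in $\omega$. Apply Corollary \ref{Cor-Est-Y-Carl-Norms} on $\Omega'$ with data $y_0|_{\Omega'}$ and source $f|_{\Omega'}$ to obtain $(Y, H)$ satisfying the weighted Carleman estimate \eqref{Est-Y-Gal-q-W}. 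Setting $\tilde Y = \chi Y$ (extended by zero to $\R^d$), one computes
\[
(\partial_t - \Delta + q + W \cdot \nabla) \tilde Y = \chi \mathbf{1}_{\omega_0} H + \chi f - 2 \nabla \chi \cdot \nabla Y - (\Delta \chi) Y + (W \cdot \nabla \chi) Y.
\]
To correct the initial-datum mismatch $y_0 - \tilde Y(0,\cdot) = (1-\chi) y_0$, which is itself supported in $\omega$, one constructs an explicit exterior function $Z \in \mathcal{C}([0,T]; \BUC^1(\R^d))$ with $Z(0,\cdot) = (1-\chi) y_0$, $Z(T,\cdot) = 0$, and whose heat-equation residual $r_Z = (\partial_t - \Delta + q + W\cdot\nabla) Z - (1-\chi) f$ is supported in $\omega$ with a suitable weighted bound (for instance by combining a localized free evolution starting from $(1-\chi)y_0$ with a smooth time factor vanishing at $T$, and readjusting the spatial support via further cutoffs supported in $\omega$). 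Then $y := \tilde Y + Z$ solves \eqref{NC-Heat-Eq-LOT-local} on $\R^d$ with $y(0,\cdot) = y_0$, $y(T,\cdot) = 0$, and the control $\mathbf{1}_\omega h$ assembled from $\chi \mathbf{1}_{\omega_0} H$, the three commutator terms, $(\chi - 1)f$, and $r_Z$, all supported in $\omega$.

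\emph{Weighted $W^{1,\infty}$ bootstrap (main obstacle).} The main technical difficulty is to promote the $L^2$ Carleman estimate \eqref{Est-Y-Gal-q-W} to the weighted $W^{1,\infty}$ bound \eqref{Est-for-fixed-Points}. Using the pointwise inequality $s\varphi(t,x) \geq \beta \Phi(t)$ (valid for $\lambda = \lambda_\beta$ by \eqref{Phi-bounds}) together with Corollary \ref{Cor-Est-Y-Carl-Norms} one immediately gets
\[
\|Y e^{\beta \Phi}\|_{L^2_T(L^2(\Omega'))} + \|H e^{\beta \Phi}\|_{L^2_T(L^2(\omega_0))} \leq C\bigl(\|y_0\|_{L^\infty} + \|f e^\Phi\|_{L^\infty_T(L^\infty)}\bigr).
\]
To upgrade this to the $L^\infty_T(W^{1,\infty})$ norm with the strictly smaller weight $e^{\gamma \Phi}$, $\gamma \in (0, \beta)$, one applies Proposition \ref{Prop-Classical-Estimates} iteratively to the equation satisfied by $Y e^{\gamma \Phi}$, which reads $(\partial_t - \Delta)(Y e^{\gamma \Phi}) = e^{\gamma \Phi}(\mathbf{1}_{\omega_0} H + f - qY - W\cdot\nabla Y) + \gamma \Phi'(t) Y e^{\gamma \Phi}$; the extra term $\gamma \Phi'(t) Y e^{\gamma \Phi}$ is absorbed into the source thanks to the strict inequality $\gamma < \beta$, which leaves a margin $e^{(\beta - \gamma)\Phi}$ to dominate $\Phi'$ in the relevant norms. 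Each bootstrap step gains integrability (by the gain $1/p - 1/q = 1/(d+2)$ in Proposition \ref{Prop-Classical-Estimates}) at the price of an arbitrarily small loss of exponential weight; finitely many steps bring us from $L^2$ to $L^\infty$ and, via the gradient statement of Proposition \ref{Prop-Classical-Estimates}, to $W^{1,\infty}$. Combining the resulting bound on $\tilde Y$ with the explicit weighted bound on $Z$ and on the commutator terms yields \eqref{Est-for-fixed-Points}, and linearity follows from the linearity of each step.
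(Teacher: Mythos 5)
Your overall architecture is the same as the paper's: null-control in weighted $L^2$ via the Carleman-based Corollary \ref{Cor-Est-Y-Carl-Norms} on a large ball, a parabolic-regularity bootstrap that trades a cascade of slightly decreasing exponential weights for increasing integrability until one reaches weighted $W^{1,\infty}$, and a cutoff/gluing step that places all commutators, plus an explicit exterior correction of the initial datum (free evolution times a time cutoff), into the control. However, one step fails as written: you assemble the final control from, among other terms, $\chi \mathbf{1}_{\omega_0} H$. The Carleman corollary only controls $H$ in a weighted $L^2(0,T;L^2)$ norm, and no bootstrap is applied (or readily applicable) to $H$ itself; so this term cannot satisfy the required bound $\| \mathbf{1}_{\omega} h\, e^{\gamma\Phi}\|_{L^\infty_T(L^\infty)} < \infty$, and the resulting $h$ need not even belong to $L^\infty(0,T;\BUC(\R^d))$. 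The paper avoids this by placing the $L^2$-control region $\omega_0$ in an outer annulus ($B_{\R^d}(5)\setminus \overline B_{\R^d}(4)$) on which the gluing cutoff ($\eta_{23}$, supported in $B_{\R^d}(3)$) vanishes identically, so that $H$ drops out of the final control entirely and the only surviving control terms are commutators in $Y$ and $\nabla Y$, localized on an annulus where the bootstrap does deliver weighted $W^{1,\infty}$ bounds. Your construction is repaired by the same device (arrange $\chi\equiv 0$ on $\omega_0$), but as stated this is a genuine gap, not merely an omitted detail.

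A second, more minor point: Proposition \ref{Prop-Classical-Estimates} is a statement on $\R^d$, while $Y$ solves a Dirichlet problem on a bounded ball with no control of its normal derivative at the boundary. The bootstrap therefore needs a nested family of spatial cutoffs (radii shrinking from $4$ to $3$ in the paper) in addition to the weight cascade $\beta=\beta_0>\beta_1>\cdots>\beta_N=\gamma$, the commutators $\nabla\eta_j\cdot\nabla Y$ and $(\Delta\eta_j)Y$ being absorbed at each step using the previous step's estimate; your write-up records only the weight loss, and in particular the commutators coming from your $\chi$ must be supported strictly inside the region actually reached by the bootstrap. Finally, reaching $\mathcal{C}([0,T];\BUC^1(\R^d))$ rather than just $L^\infty_T(W^{1,\infty})$ requires the extra iteration provided by item 3 of Proposition \ref{Prop-Classical-Estimates}. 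None of this changes the architecture of your argument, which otherwise matches the paper's proof.
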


\begin{proof}
	Set $\Omega = B_{\R^d}(5)$ and $\omega_0 = \Omega \setminus \overline B_{\R^d}(4)$. 
	According to Corollary \ref{Cor-Est-Y-Carl-Norms}, choosing $\eta_{45}$ a smooth cut-off function taking value $1$ in $B_{\R^d}(4)$ and compactly supported in $B_{\R^d}(5)$, 
	there exists a solution $(Y,H)$ of 
	\begin{equation*}
		\left\{
			\begin{array}{ll}
				\partial_t Y - \Delta Y + q Y + W \cdot \nabla Y = H \mathbf{1}_{\omega_0} + \eta_{45} f, \quad & \text{ in } (0,T) \times \Omega, 
				\\
				Y = 0, \quad & \text{ on } (0,T) \times \partial \Omega, 
				\\ 
				Y{( 0,\cdot)} = \eta_{45} y_0, \quad Y{( T,\cdot)}= 0 & \text { in } \Omega,
			\end{array}
		\right.
	\end{equation*}
	with $\theta^{-1} Y \exp( \beta \Phi) \in L^2(0,T;H^1(\Omega))$ and 
	\begin{equation}
		\label{Estimate-Phi-Y}
		\| \theta^{-1} Y e^{\beta \Phi} \|_{L^2_T(H^1(\Omega))} 
		\leq 
		C \| f e^{\Phi}\|_{L^\infty_T(L^\infty)}
		+ 
		C \| y_0 \|_{W^{1,\infty}}. 
	\end{equation}
	
	Set $\gamma \in (0, \beta)$. For $N  = d/2+3$ if $d$ is even, or $N = (d+1)/2 +2$ if $d$ is odd, we set, for $j \in \{0, \cdots, N\}$,
	$$
		R_j = 4 - \frac{j}{N},
		\quad \text{ and } \quad
		\beta_j = \beta \left(1- \frac{j}{N} \right) + \frac{j}{N} \gamma.
	$$
	so that $R_0 = 4 $ and $R_N = 3$, $\beta_0 = \beta$ and $\beta_N = \gamma$.
	
	 For $j \in \{1, \cdots, N\}$ we choose smooth cut-off functions $\eta_j$ taking value one in $B_{\R^d} (R_{j})$ and vanishing on $\Omega \setminus B_{\R^d}(R_{j-1})$. 
	We then set, for $j \in \{1, \cdots, N\}$, 
	$$
		z_j(t,x)  = Y(t,x) \eta_j(x) e^{\beta_j \Phi(t)}, \quad \text{ for } (t, x) \in [0,T] \times \Omega, 
	$$
	which we extend to $[0,T]\times \R^d$ by $0$  outside $[0,T] \times \Omega$. One easily checks that for each $j \in \{1, \cdots, N\}$, $z_j$ satisfies 	
	$$
		    \left\{
			\begin{array}{ll}
				\partial_t z_j - \Delta z_j
				 = h_j, \quad & \text{ in } (0, T) \times \R^d, 
				\\
				z_j(0,\cdot) = y_0 \eta_j e^{\beta_j\Phi(0)} \quad & \text{ in } \R^d,
			\end{array}
		\right.
	$$
	with 
	$$
		h_j  = \beta_j \Phi' Y \eta_j e^{\beta_j \Phi} 
		+ \eta_j f e^{\beta_j \Phi}
		- \eta_j (q Y e^{\beta_j \Phi} + W \cdot \nabla Y e^{\beta_j \Phi})
		- 2 \nabla \eta_j\cdot \nabla Y e^{\beta_j \Phi} - \Delta \eta_j Y e^{\beta_j \Phi}.
	$$

	Now, since $Y$ satisfies \eqref{Estimate-Phi-Y}, we use Proposition \ref{Prop-Classical-Estimates} to get recursively that, while $1/2 - j/(d+2) >0$, $z_j \in L^{q_j}(0,T;W^{1,q_j})$ with $1/q_j = 1/2 - j /(d+2)$ with 
	$$
		\| z_j \|_{L^{q_j}_T(W^{1,q_j})}
		\leq 
		C \| y_0 \|_{L^\infty} + C \| f e^{\beta \Phi}\|_{L^\infty_T(L^\infty)}.
	$$
	This follows directly from the fact that $ \beta = \beta_0 > \beta_1 > \cdots > \beta_N$,
	$$
		\| h_1\|_{L^{2}_T(L^{2})} \leq C \| \theta^{-1} Y e^{\beta \Phi} \|_{L^{2}_T(W^{1,2})} + C \| f e^{\beta \Phi}\|_{L^\infty_T(L^\infty)}, 
	$$
	and, for all $j \in \{2, \cdots,N\}$ satisfying  $1/2 - j/(d+2) >0$, since $z_{j-1} = Y e^{\beta_{j-1} \Phi}$ on the support of $\eta_j$,
	$$
		\| h_j\|_{L^{q_{j-1}}_T(L^{q_{j-1}})} \leq C \| z_{j-1}\|_{L^{q_{j-1}}_T(W^{1,q_{j-1}})} + C \| f e^{\beta \Phi}\|_{L^\infty_T(L^\infty)}. 
	$$

	If $d$ is odd, this yields, for $j_d = (d+1)/2$, that $z_{j_d} \in L^{2d+4}(0,T;W^{1,2d+4}(\R^d))$. Thus, $h_{j_d+1}$ belongs to $L^{2d+4}(0,T; L^{2d+4}(\R^d))$, and from Proposition \ref{Prop-Classical-Estimates} item 2, $z_{j_d +1}  \in L^{\infty}(0,T;W^{1,\infty}(\R^d))$. Accordingly, $h_{j_d +2}  \in L^{\infty}(0,T;L^{\infty}(\R^d))$, and from Proposition \ref{Prop-Classical-Estimates} item 3, $z_{j_d+2} = z_N \in L^{\infty}(0,T; \BUC^{1}(\R^d))$.
	
	If $d$ is even, we get, for $j_d = d/2$, that $z_{j_d} \in L^{d+2}(0,T;W^{1,d+2}(\R^d))$. Since we also have $\theta^{-1} Y e^{\beta \Phi} \in L^{2}(0,T; H^1(\R^d))$ and $z_{j_d} = \eta_{j_d} Y e^{\beta_{j_d} \Phi}$, we deduce that $z_{j_d} \in L^{\tilde q} (0,T; W^{1,\tilde q}(\R^d))$ for all $\tilde q \in (2,d+2)$. Taking $\tilde q < d+2$ and close to $d+2$, $h_{j_d+1} \in L^{\tilde q}(0,T; L^{\tilde q}(\R^d))$, and from Proposition \ref{Prop-Classical-Estimates} item 1, we deduce $z_{j_d+1} \in L^{q}(0,T; W^{1,q}(\R^d))$ for some $q > d+2$. Accordingly, $h_{j_d+2} \in L^{q}(0,T;L^{q}(\R^d))$ for some $q > d+2$, and from Proposition \ref{Prop-Classical-Estimates} item 2, $z_{j_d+2} \in L^{\infty}(0,T; W^{1,\infty}(\R^d))$.  We finally deduce, from Proposition \ref{Prop-Classical-Estimates} item 3, $z_{j_d+3} = z_N \in L^{\infty}(0,T; \BUC^{1}(\R^d))$.
	
	We now simply remark that $z_N = Y e^{\gamma \Phi}$ in $\overline B_{\R^d}(3)$, so that $Y e^{\gamma \Phi} \in  L^{\infty}(0,T; \BUC^{1}(\overline B_{\R^d}(3)))$. Taking a smooth cut-off function $\eta_{23}$ taking value $1$ in $\overline B_{R^d}(2)$, and vanishing outside of $B_{\R^d}(3)$, $\tilde y = \eta_{23} Y$ satisfies
	$$
	    	\left\{
			\begin{array}{ll}
				\partial_t \tilde y - \Delta \tilde y + q \tilde y + W \cdot \nabla \tilde y
				 = \eta_{23} f + \tilde h, \quad & \text{ in } (0, T) \times \R^d, 
				\\
				\tilde y (0,\cdot) = \eta_{23} y_0 \quad & \text{ in } \R^d, 
			\end{array}
		\right.
	$$
	with
	$$
		\tilde h = - 2 \nabla \eta_{23} \cdot \nabla Y - \Delta \eta_{23} Y + W \cdot \nabla \eta_{23} Y, 
	$$
	supported in $[0,T] \times \omega$, 
	and 
	$$
		\tilde Y(T,\cdot) = 0 \text{ in } \R^d.
	$$	
	Besides, we have the estimate 
	\begin{equation}
		\label{Est-tilde-y-and-h}
		\|e^{\gamma \Phi} \tilde y \|_{L^\infty_T(W^{1,\infty})}
		+
		\|e^{\gamma \Phi} \tilde h \mathbf{1}_{\omega} \|_{L^\infty_T(L^\infty)} 
		\leq 
		C \left( \| y_0 \|_{W^{1,\infty}} + \|f e^\Phi \|_{L^\infty_T(L^\infty)}\right).
	\end{equation}

	We also introduce $\check y$ the solution of 
	$$
	    	\left\{
			\begin{array}{ll}
				\partial_t \check y - \Delta \check y + q \check y + W \cdot \nabla \check y
				 = 0 , \quad & \text{ in } (0, T) \times \R^d, 
				\\
				\check y (0,\cdot) = y_0 \quad & \text{ in } \R^d, 
			\end{array}
		\right.
	$$
	which satisfies, according to Remark \ref{Rk-WP-BUC}, $\check{y}\in \mathcal{C}([0,T];\BUC^1(\R^d))$ and
	\begin{equation}
		\label{Est-check-y}
		\| \check y \|_{L^\infty_T(W^{1,\infty})} 
		\leq 
		C \| y_0 \|_{W^{1,\infty}}.
	\end{equation}
	Let us choose a smooth function $\rho$ on $[0,T]$ taking value $1$ close to $t = 0$ and vanishing close to $t = T$. Then, the function
	%
	$$
		y  := \rho \check y (1 - \eta_{23}) + \tilde y , 
	$$
	satisfies \eqref{NC-Heat-Eq-LOT-local} with
	$$
		h = 2 \rho \nabla \eta_{23} \cdot \nabla \check y + \rho \Delta \eta_{23} \check y + \rho' \check y (1- \eta_{23}) - (1-\eta_{23})f+ \tilde h.
	$$
	The functions $y$ and $h$ satisfy the estimate \eqref{Est-for-fixed-Points} due to  the estimates \eqref{Est-tilde-y-and-h}, \eqref{Est-check-y}, the explicit expression of $y$ and $h$, and the fact that $\exp(\gamma \Phi)$ is bounded on the support of $\rho$.
	
	The fact that the map $(y_0, f)\mapsto (y, h)$ is linear comes from the fact that $(y_0,f) \mapsto (Y,H)$ is linear from Corollary \ref{Cor-Est-Y-Carl-Norms} and from the fact that all the above construction is linear in $(y_0, f)$.
\end{proof}

For later use, we point out that a similar proof left to the reader yields the following result:
\begin{lemma}
	\label{Thm-Local-Regularity-Result}
	Let $d \geq 1$, $T>0$, and 
	\begin{equation}
		q \in L^{\infty} (0,T; L^\infty(\R^d)), 
		\quad
		W  \in L^{\infty}(0,T; (L^\infty( \R^d))^d). 
	\end{equation}
	Then, for any closed set $K$ of $\R^d$ and compact set $K_1$ of $\R^d$ which do not intersect $K$, there exists a constant $C>0$ such that for all $f \in L^2(0,T; L^2(\R^d))$ supported in $[0,T] \times K$, the  solution $ y$ of 
 \begin{equation}     \label{NC-Heat-Eq-LOT-localregularity}
     \left\{
			\begin{array}{ll}
				\partial_t  y - \Delta y+q  y +W\cdot\nabla y
				 = f, \quad & \text{ in } (0, T) \times \R^d, 
				\\
				 y(0,\cdot) = 0 \quad & \text{ in } \R^d,
            
			\end{array}
		\right.
 \end{equation} 
 belongs to $L^\infty(0,T; \BUC(K_1))$ and satisfies
 $$
 \|y \|_{L^\infty(0,T; W^{1,\infty}(K_1))} \leq C \| f\|_{ L^2(0,T; L^2(\R^d))}.
 $$
\end{lemma}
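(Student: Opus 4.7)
The argument is a direct adaptation of the bootstrap scheme used in the proof of Theorem~\ref{Thm-NC-BUC}, and is in fact somewhat simpler here because the source $f$ is supported in $K$ while $K_1 \cap K = \emptyset$: cut-offs supported in a neighborhood of $K_1$ disjoint from $K$ automatically annihilate $f$.

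First, I would establish a baseline energy estimate. Testing the equation against $y$, integrating by parts on $\R^d$, and absorbing the $qy^2$ and $(W\cdot\nabla y)y$ contributions via Cauchy--Schwarz and Gronwall's lemma gives
$$
\| y \|_{L^\infty_T(L^2)} + \| \nabla y \|_{L^2_T(L^2)} \leq C \| f \|_{L^2_T(L^2)},
$$
with $C$ depending on $T$, $\| q \|_{L^\infty_T(L^\infty)}$ and $\| W \|_{L^\infty_T(L^\infty)}$.

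Next, I would fix a nested family $K_1 \Subset U_N \Subset U_{N-1} \Subset \cdots \Subset U_0$ of compact neighborhoods, each disjoint from $K$, together with smooth cut-offs $\eta_j$ satisfying $\eta_j \equiv 1$ on $U_j$ and $\mathrm{supp}\,\eta_j \subset U_{j-1}$. Setting $z_j := \eta_j y$ and using the crucial fact that $\eta_j f \equiv 0$ (since $\mathrm{supp}\,\eta_j \cap K = \emptyset$), we see that $z_j$ solves
$$
\partial_t z_j - \Delta z_j = -\eta_j q y - \eta_j W \cdot \nabla y - 2 \nabla \eta_j \cdot \nabla y - (\Delta \eta_j)\, y, \qquad z_j(0,\cdot) = 0,
$$
on $\R^d$. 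Since $\eta_{j-1} \equiv 1$ on $\mathrm{supp}\,\eta_j$, the right-hand side is pointwise bounded by $C(|z_{j-1}| + |\nabla z_{j-1}|)$. I would then iterate Proposition~\ref{Prop-Classical-Estimates}: starting from $z_1 \in L^2_T(W^{1,2})$ (by the energy estimate), successive applications of item~1 improve integrability according to $1/q_{j+1} = 1/q_j - 1/(d+2)$. After finitely many steps the exponent exceeds $d+2$; item~2 of Proposition~\ref{Prop-Classical-Estimates} then yields $L^\infty_T(W^{1,\infty})$, and one further application of item~3 upgrades this to $L^\infty_T(\BUC^1)$. Choosing $N$ larger than the required number of iterations ensures that $z_N \equiv y$ on a neighborhood of $K_1$, and concatenating the estimates gives the claimed bound.

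The one piece of technical care, exactly as in the proof of Theorem~\ref{Thm-NC-BUC}, is the dimensional parity: when $d$ is even the sequence $1/q_j = 1/2 - j/(d+2)$ hits the endpoint $1/(d+2)$ at $j = d/2$, where item~1 is no longer directly applicable. This is the main (and only) obstacle, but it is resolved by the same trick as in Theorem~\ref{Thm-NC-BUC}: interpolating between $z_{d/2} \in L^{d+2}_T(W^{1,d+2})$ and $z_{d/2} \in L^2_T(W^{1,2})$ yields membership in $L^{\tilde q}_T(W^{1,\tilde q})$ for some $\tilde q < d+2$ close to $d+2$, so that one more bootstrap step using item~1 crosses the threshold and item~2 becomes applicable.
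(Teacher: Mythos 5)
Your proposal is correct and follows essentially the route the paper intends: the paper only sketches this lemma (nested compact neighborhoods of $K_1$ disjoint from $K$, cut-offs $\eta_j$, and an induction "closely following the proof of Theorem \ref{Thm-NC-BUC}"), and your write-up fills in exactly those details — the $L^2$ energy estimate as the starting point, the localized equations for $z_j=\eta_j y$ with the source annihilated by the cut-offs, the bootstrap via Proposition \ref{Prop-Classical-Estimates}, and the interpolation fix at the endpoint exponent $d+2$ when $d$ is even.
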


One can for instance introduce the compact sets defined for any $j\in\N$ by
$$
K_{1}^{j}:=\enstq{x\in\R^d}{\mathrm{dist}(x,K_1)\leq \frac{\mathrm{dist}(K,K_1)}{2^{j+1}}},
$$
choose $\eta_j\in\mathcal{C}^{\infty}_{c}(K_{1}^{j})$ such that $\eta_j=1$ on $K_{1}^{j+1}$, and work by induction on the sequence $(y^{j})_{j\in\N}$ defined 
$$
	y^{0}=\eta_{0}\check y\ \text{ and }\ y^{j+1}=\eta_{j+1}y^{j}\text{ for }j\in\N.
$$
Details are left to the reader as they closely follow the ones of the proof of Theorem \ref{Thm-NC-BUC}. 

\subsection{Null-Controllability in $X_\alpha$ and $X_\alpha^1$.}

Let us start with the linear case. 
\begin{theorem}
	\label{Thm-NC-linear-LOT}
	Let $d \geq 1$, $T>0$, and $\alpha >1$. 
	
	Let 
	\begin{equation}
		q \in L^{\infty} (0,T; X_\alpha), 
		\quad
		W  \in L^{\infty}(0,T; X_\alpha). 
	\end{equation}
	Then there exists a constant $C>0$ such that for any $y_0 \in X_\alpha $, there exists $h \in L^\infty(0,T; X_\alpha)$ satisfying
	\begin{equation}
		\label{Estimate-f-NC}
		\| \mathbf{1}_{\omega} h \|_{L^{\infty}_{T}(X_\alpha)} \leq C \|y_0\|_{X_\alpha},
	\end{equation}
	%
	for which the solution $y$ of 
	\begin{equation}
		\label{NC-Heat-Eq-LOT}
		\left\{
			\begin{array}{ll}
				\partial_t y - \Delta y + q y + W\cdot \nabla y
				 = \mathbf{1}_{\omega} h, \quad & \text{ in } (0, T) \times \R^d, 
				\\
				y(0, \cdot) = y_0, \quad & \text{ in } \R^d.
			\end{array}
		\right.
	\end{equation}
	belongs to $\mathcal{C}([0,T]; X_\alpha)$, and satisfies
	\begin{equation}
		\label{Control-Req}
		y(T, \cdot) = 0 \hbox{ in } \R^d.
	\end{equation}
\end{theorem}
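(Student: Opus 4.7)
The plan is to combine the $\BUC^1$ null-controllability result (Theorem \ref{Thm-NC-BUC}) with the $X_\alpha$ well-posedness result (Theorem \ref{Thm-WP-linear-LOT}), via a preliminary regularization step. Two observations drive the argument. First, the analyticity of the heat semigroup on $X_\alpha$ immediately promotes any $X_\alpha$ datum to $X_\alpha \cap \BUC^1(\R^d)$ after a positive time. Second, any control supported in $\omega = \R^d \setminus \overline{B_{\R^d}(2)}$ vanishes on $B_{\R^d}(1)$ and is therefore, up to trivial holomorphic extension by zero, an element of $X_\alpha$ with $X_\alpha$-norm equal to its $L^\infty$-norm.

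On the interval $(0, T/2)$, I would run the uncontrolled equation (i.e. \eqref{NC-Heat-Eq-LOT} with $\mathbf{1}_\omega h \equiv 0$) starting from $y_0$. By Theorem \ref{Thm-WP-linear-LOT}, the resulting trajectory $y^\flat$ belongs to $\mathcal{C}([0,T/2]; X_\alpha)$ with $\sqrt{t}\nabla y^\flat \in L^\infty(0,T/2; X_\alpha)$. In particular $y^\flat(T/2,\cdot)$ lies in $X_\alpha$ and each of its partials lies in $X_\alpha \subset \BUC(\R^d)$, so $y^\flat(T/2,\cdot) \in X_\alpha \cap \BUC^1(\R^d)$ with
$$
\|y^\flat(T/2,\cdot)\|_{X_\alpha} + \|y^\flat(T/2,\cdot)\|_{W^{1,\infty}(\R^d)} \leq C_T\,\|y_0\|_{X_\alpha}.
$$

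Next, on $(T/2, T)$ I apply Theorem \ref{Thm-NC-BUC} with initial datum $y^\flat(T/2,\cdot) \in \BUC^1(\R^d)$, potentials $(q, W)$ (which lie in $L^\infty(X_\alpha) \subset L^\infty(\BUC)$) and source $f = 0$. This delivers a control $h$ with $\mathbf{1}_\omega h \in L^\infty(T/2,T; \BUC(\R^d))$ and a trajectory $y^\sharp \in \mathcal{C}([T/2,T]; \BUC^1(\R^d))$ hitting zero at time $T$, with $\|\mathbf{1}_\omega h\|_{L^\infty_T(L^\infty)} \leq C_T\,\|y_0\|_{X_\alpha}$. Because $\mathbf{1}_\omega h$ is in $\BUC(\R^d)$ and vanishes on $B_{\R^d}(1)$, the second observation above shows that $\mathbf{1}_\omega h \in L^\infty(T/2, T; X_\alpha)$, with equal norms. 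One then invokes Theorem \ref{Thm-WP-linear-LOT} once more on $(T/2, T)$, now with source $\mathbf{1}_\omega h$ and initial datum $y^\flat(T/2,\cdot) \in X_\alpha$, to produce a solution $\widehat y \in \mathcal{C}([T/2,T]; X_\alpha)$ of the very same Cauchy problem.

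To conclude, one must identify $\widehat y$ with $y^\sharp$: both are continuous $\BUC(\R^d)$-valued mild solutions of the same linear Cauchy problem, so a Gronwall-type argument on the Duhamel formula, entirely analogous to the one used in Remark \ref{Rem-Uniqueness} for the semilinear case, forces $\widehat y = y^\sharp$ on $[T/2, T]$. Concatenating $y^\flat$ and $\widehat y$ on $[0,T/2]$ and $[T/2,T]$ respectively, and extending $h$ by $0$ on $(0,T/2)$, then yields a trajectory in $\mathcal{C}([0,T];X_\alpha)$ solving \eqref{NC-Heat-Eq-LOT} with $y(T,\cdot) = 0$, and estimate \eqref{Estimate-f-NC} follows from combining the bounds in the two previous steps. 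The only genuinely delicate point is the $\BUC$-uniqueness needed to identify $y^\sharp$ and $\widehat y$; once this is in place, the rest of the argument is a direct assembly of the tools built in Sections \ref{Sec-WP-Heat+lower-order} and \ref{Sec-NC-in-X-alpha}.
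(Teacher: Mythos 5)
Your proposal is correct and follows essentially the same route as the paper: a free evolution on $(0,T/2)$ via Theorem \ref{Thm-WP-linear-LOT} to upgrade the datum to $X_\alpha^1$, then the $\BUC^1$ null-controllability of Theorem \ref{Thm-NC-BUC}, using that a control supported in $\omega$ trivially lies in $X_\alpha$, and finally Theorem \ref{Thm-WP-linear-LOT} again to recover the $X_\alpha$ regularity of the controlled trajectory. You are in fact slightly more careful than the paper on the identification of the $\BUC^1$ and $X_\alpha$ solutions of the same Cauchy problem, a uniqueness point the paper leaves implicit.
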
	

\begin{proof}
	We can assume without loss of generality that $y_0 \in X^1_\alpha$. Indeed, if we manage to prove Theorem \ref{Thm-NC-linear-LOT} for initial state $y_0 \in X_\alpha^1$, we can simply choose $h = 0$ on $(0,T/2)$, so that the application of Theorem \ref{Thm-WP-linear-LOT} with $f = 0$ yields $y \in \mathcal{C}([0,T/2]; X_\alpha)$ and $y(T/2 ) \in X_\alpha^1$, and then use a control function on the time interval $(T/2, T)$.

	Let us then consider $y_0 \in X^1_\alpha$. Applying Theorem \ref{Thm-NC-BUC} with $f = 0$, there exists $h$ such that $ \mathbf{1}_{\omega} h \in L^\infty(0,T;\BUC(\R^d))$ and the solution $y$ of \eqref{NC-Heat-Eq-LOT} satisfies \eqref{NC-Goal}. Accordingly $\mathbf{1}_{\omega}  h \in L^\infty(0,T;X_\alpha)$. Theorem \ref{Thm-NC-linear-LOT} then guarantees that $y \in \mathcal{C}([0,T], X_\alpha)$ and gives the estimate \eqref{Estimate-f-NC} on $h$.
\end{proof}

The semilinear case is similar, except that it involves a fixed point argument based on the estimate \eqref{Est-for-fixed-Points}:

\begin{theorem}
\label{Thm-NC-linear-semilinear}
	Let $d\geq 1$, $T>0$, and $\alpha>1$. 
	Let $\varepsilon >0$ and $g:[0,T]\times (\R^d\cup \overline{\Omega}_\alpha)\times\overline{B}_\C(\varepsilon)\times \overline{B}_{\C^d}(\varepsilon)\rightarrow \C$ such that the conditions \eqref{g-in-BUC}--\eqref{Holomorphic-Condition-g}--\eqref{g-Lipschitz}--\eqref{g-for-s=0} hold, and 
	%
	\begin{equation}
		\label{g-Lipschitz-3}
		g \in L^\infty([0,T] \times (\R^d \cup \overline\Omega_\alpha); W^{3, \infty} (\overline B_\C (\varepsilon) \times  \overline B_{\C^d} (\varepsilon))).
	\end{equation}

Then there exists $\delta_\alpha>0$ and a constant $C$ such that for any initial data $y_0\in X_{\alpha}^1$ satisfying the smallness condition
\begin{equation}
\label{eq:smallness condition Thm-NC-linear-semilinear}
	\|y_0\|_{X_\alpha^1}\leq \delta_\alpha,
\end{equation}
there exist a control function $h $ with $ \mathbf{1}_{\omega} h \in L^{\infty}(0,T; X_\alpha)$ and a controlled trajectory $y\in\mathcal{C}([0,T],X_\alpha^1)$ solution of 
\begin{equation}
		\label{NC-Heat-Eq-semilinear}
		\left\{
			\begin{array}{ll}
				\partial_t y - \Delta y 
				 = \mathbf{1}_{\omega} h + g(\cdot, \cdot, y, \nabla y), \quad & \text{ in } (0, T) \times \R^d, 
				\\
				y(0, \cdot) = y_0, \quad & \text{ in } \R^d,
			\end{array}
		\right.
	\end{equation}
	and satisfying the controllability requirement \eqref{Control-Req}.
\end{theorem}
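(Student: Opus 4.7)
The plan is to prove Theorem \ref{Thm-NC-linear-semilinear} by a fixed point argument that reduces the semilinear control problem to a family of linear control problems, in the spirit of \cite{FursikovImanuvilov}. Since $g(\cdot,\cdot,0,0)=0$ by \eqref{g-for-s=0}, Taylor's formula first provides the decomposition
$$g(t,x,s,s_d) = \mathfrak{q}(t,x,s,s_d)\,s + \mathfrak{W}(t,x,s,s_d)\cdot s_d,$$
where
$$\mathfrak{q}(t,x,s,s_d) := \int_0^1 (\partial_s g)(t,x,\tau s,\tau s_d)\,{\rm d}\tau,\qquad \mathfrak{W}(t,x,s,s_d) := \int_0^1 (\nabla_{s_d} g)(t,x,\tau s,\tau s_d)\,{\rm d}\tau.$$
These inherit the holomorphy \eqref{Holomorphic-Condition-g} of $g$ in $(x,s,s_d)$, and by the strengthened regularity assumption \eqref{g-Lipschitz-3} they are $W^{2,\infty}$ with respect to $(s,s_d)$, hence Lipschitz with respect to $(s,s_d)$ uniformly in $(t,x)$.

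For $R \in (0,\varepsilon)$ to be fixed later, I would work in the closed ball
$$\mathcal{B}_R := \{\hat y \in L^\infty(0,T;X_\alpha^1) : \|\hat y\|_{L^\infty_T(X_\alpha^1)} \leq R\},$$
and for each $\hat y \in \mathcal{B}_R$ define the composed potentials $\tilde q[\hat y](t,x) := \mathfrak{q}(t,x,\hat y(t,x),\nabla\hat y(t,x))$ and $\tilde W[\hat y](t,x) := \mathfrak{W}(t,x,\hat y(t,x),\nabla\hat y(t,x))$. Since $X_\alpha$ is an algebra stable under composition with functions holomorphic in a neighbourhood of the range of $(\hat y,\nabla\hat y)$, these potentials belong to $L^\infty(0,T;X_\alpha)$, with norms bounded by a constant $M_R$ depending only on $R$. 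Theorem \ref{Thm-NC-linear-LOT} applied with these potentials then provides a control $h[\hat y]$ and a trajectory $y[\hat y]$ with $y[\hat y](0)=y_0$, $y[\hat y](T)=0$, and $\|y[\hat y]\|_{L^\infty_T(X_\alpha)}+\|\mathbf{1}_\omega h[\hat y]\|_{L^\infty_T(X_\alpha)} \leq C_R\|y_0\|_{X_\alpha}$; the upgraded regularity $y[\hat y]\in\mathcal{C}([0,T];X_\alpha^1)$, with norm controlled by $C_R'\|y_0\|_{X_\alpha^1}$, then follows by rewriting the equation as $\partial_t y -\Delta y = \mathbf{1}_\omega h - \tilde q[\hat y] y - \tilde W[\hat y]\cdot\nabla y$ and invoking Theorem \ref{Thm-WP-with-source-terms} on short time intervals together with iteration. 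Setting $\Lambda(\hat y) := y[\hat y]$, any fixed point of $\Lambda$ solves \eqref{NC-Heat-Eq-semilinear}--\eqref{Control-Req}, since $\tilde q[y]\,y + \tilde W[y]\cdot\nabla y = g(\cdot,\cdot,y,\nabla y)$ by construction.

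To obtain a fixed point via Banach's contraction principle, I would first choose $\delta_\alpha$ such that $C_R'\delta_\alpha \leq R$, so that $\Lambda$ maps $\mathcal{B}_R$ into itself for $\|y_0\|_{X_\alpha^1}\leq\delta_\alpha$. Given $\hat y_1,\hat y_2 \in \mathcal{B}_R$, the difference $z := \Lambda(\hat y_1)-\Lambda(\hat y_2)$ solves the linear controlled equation with potentials $\tilde q[\hat y_1], \tilde W[\hat y_1]$, trivial initial and final data, and source
$$F_{12} := -(\tilde q[\hat y_1]-\tilde q[\hat y_2])\Lambda(\hat y_2) - (\tilde W[\hat y_1]-\tilde W[\hat y_2])\cdot\nabla\Lambda(\hat y_2),$$
which, by the Lipschitz properties of $\mathfrak{q},\mathfrak{W}$ inherited from \eqref{g-Lipschitz-3} and the algebra structure of $X_\alpha$, satisfies $\|F_{12}\|_{L^\infty_T(X_\alpha)} \leq C_R\|y_0\|_{X_\alpha^1}\|\hat y_1 - \hat y_2\|_{L^\infty_T(X_\alpha^1)}$. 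A version of Theorem \ref{Thm-NC-linear-LOT} allowing a source term in $L^\infty(0,T;X_\alpha)$ and whose construction is linear in the data $(y_0,f)$ then yields $\|z\|_{L^\infty_T(X_\alpha^1)} \leq C_R^2\|y_0\|_{X_\alpha^1}\|\hat y_1 - \hat y_2\|_{L^\infty_T(X_\alpha^1)}$, giving contractivity once $\|y_0\|_{X_\alpha^1}$ is further reduced. The main obstacle is precisely this extension of the linear controllability result: one needs a construction in $X_\alpha$ incorporating a source term while remaining linear in $(y_0,f)$, so that the control $h[\hat y]$ is unambiguously defined in Step~2 and the subtraction in the contraction estimate is meaningful. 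This extension is however a direct adaptation of Theorem \ref{Thm-NC-BUC}, which already includes a source term $f$ and states the linearity of $(y_0,f)\mapsto(y,h)$; combining it with the holomorphic estimates of Section \ref{Sec-Heat-WP} follows the very pattern used in the proof of Theorem \ref{Thm-NC-linear-LOT}.
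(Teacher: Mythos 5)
There is a genuine gap in your contraction step, and it is not the one you flag. You build $\Lambda(\hat y)$ by solving a \emph{linear} control problem whose potentials $\tilde q[\hat y],\tilde W[\hat y]$ themselves depend on $\hat y$. Consequently $\Lambda(\hat y_1)$ and $\Lambda(\hat y_2)$ are produced by the control construction applied with \emph{two different pairs of potentials}. The identity you write for $z=\Lambda(\hat y_1)-\Lambda(\hat y_2)$ is correct, but the control appearing in it is $\mathbf{1}_\omega(h[\hat y_1]-h[\hat y_2])$, which is \emph{not} the control that the (source-term, linear-in-$(y_0,f)$) construction with potentials $\tilde q[\hat y_1],\tilde W[\hat y_1]$ would assign to the data $(0,F_{12})$; linearity of $(y_0,f)\mapsto(y,h)$ for a \emph{fixed} potential says nothing about comparing outputs across two different potentials, and controlled trajectories with prescribed data are far from unique. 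So the bound $\|z\|\leq C\|F_{12}\|$ does not follow from any stated result. Closing this would require proving Lipschitz dependence of the Carleman-based control map on the potentials $(q,W)$, a substantive extra argument that your proposal neither states nor needs under its own (incorrect) reading of the obstacle. This is the classical difficulty with the ``quasilinear potentials'' reduction: it pairs naturally with a compactness fixed point (Schauder/Kakutani), not with Banach's contraction.

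The paper's proof avoids this entirely and explains why hypothesis \eqref{g-Lipschitz-3} is really there. It freezes the potentials at the linearization of $g$ at $(0,0)$, i.e.\ writes $g(\cdot,\cdot,s,s_d)=-q\,s-W\cdot s_d+\tilde g(\cdot,\cdot,s,s_d)$ with $q=-\partial_s g(\cdot,\cdot,0,0)$, $W=-\nabla_{s_d}g(\cdot,\cdot,0,0)$, so that \eqref{g-Lipschitz-3} gives the \emph{quadratic} Lipschitz bound \eqref{Cond-tilde-g} on the remainder $\tilde g$. The fixed-point map then solves the linear control problem with these \emph{fixed} potentials and source $\tilde g(\cdot,\cdot,\hat y,\nabla\hat y)$, using the weighted estimate \eqref{Est-for-fixed-Points} of Theorem \ref{Thm-NC-BUC} in the class $\|e^{\Phi/2}\hat y\|_{L^\infty_T(W^{1,\infty})}\leq R$ (the quadratic structure is exactly what makes $e^{\Phi}\tilde g(\cdot,\cdot,\hat y,\nabla\hat y)$ bounded by $CR^2$ and the Lipschitz constant of order $R$). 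Because the potentials are fixed, the difference $\Lambda(\hat y_1)-\Lambda(\hat y_2)$ \emph{is} the image of $(0,\tilde g(\cdot,\cdot,\hat y_1,\nabla\hat y_1)-\tilde g(\cdot,\cdot,\hat y_2,\nabla\hat y_2))$ under the linear map of Theorem \ref{Thm-NC-BUC}, and contractivity follows for $R$ small. The $X_\alpha^1$ regularity of the fixed point is then recovered a posteriori from Theorem \ref{Thm-Semilinear-HE} and the uniqueness statement of Remark \ref{Rem-Uniqueness}. You should restructure your argument along these lines; your composition and mapping-into-itself steps are fine, but the contraction cannot be closed with $\hat y$-dependent potentials.
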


\begin{proof}
	We set 
	$$
		q = -\partial_s g(\cdot, \cdot, 0, 0), \qquad W = -\nabla_{s_d} g(\cdot, \cdot, 0, 0). 
	$$
	so that for all $(s,s_d) \in \overline B_\C (\varepsilon) \times  \overline B_{\C^d} (\varepsilon)$, 
	$$
		g(\cdot, \cdot, s, s_d) 
		= 
		- q (\cdot, \cdot) s - W(\cdot, \cdot) \cdot s_d + \tilde g(\cdot, \cdot, s, s_d)
	$$
	and $\tilde g$ satisfies 
	$$
		\tilde g(\cdot, \cdot, 0, 0) = 0, \quad 
		\partial_s \tilde g(\cdot, \cdot, 0, 0) = 0, 
		\quad
		\nabla_{s_d} \tilde g(\cdot, \cdot, 0, 0) = 0 , 
	$$
	and condition \eqref{g-Lipschitz-3} implies
	\begin{multline}
		\label{Cond-tilde-g}
		\forall (s_1, s_{1,d}), (s_2, s_{2,d}) \in \overline B_\C (\varepsilon) \times  \overline B_{\C^d} (\varepsilon), 
		\\
		\sup_{(t,x) \in (0,T) \times (\R^d \cup \Omega_\alpha) }\left| \tilde g(\cdot, \cdot, s_1, s_{1,d}) - \tilde g(\cdot, \cdot, s_2, s_{2,d}) \right| \leq C |(s_1, s_{1,d}) - (s_2, s_{2,d})|\left( |(s_1, s_{1,d})| + (s_2, s_{2,d})|\right).
	\end{multline}
	
	We set $\beta = 3/4$, $\gamma = 1/2$, 
	$M = \| q \|_{L^{\infty}_T(L^\infty)}  + \| W\|_{L^{\infty}_T(L^\infty)} $, so that Theorem \ref{Thm-NC-BUC} applies. We choose $\Phi$ as in Theorem \ref{Thm-NC-BUC}. 
	
	For $R \in (0, \varepsilon]$, we define 
	$$
		\mathscr{C}_R =
		\enstq{y \in L^\infty(0,T;\BUC^1(\R^d))}{\| e^{\Phi/2} y \|_{L^{\infty}_{T}(W^{1,\infty})}\leq R}, 
	$$
	and the fixed point map 
	\begin{multline*}
		\Lambda_R : \widehat y \in \mathscr{C}_R \mapsto y 
		\text{ given by Theorem \ref{Thm-NC-BUC} solving the control problem }
		\\
		\left\{
			\begin{array}{ll}
				\partial_t y - \Delta y + q y + W \cdot y = \tilde g(\cdot,\cdot, \widehat y , \nabla \widehat y) +\mathbf{1}_{\omega}  h  , \quad & \text{ in } (0, T) \times \R^d, 
				\\
				y(0, \cdot) = y_0, \quad & \text{ in } \R^d, 
				\\ 
				y(T, \cdot) = 0, \quad & \text{ in } \R^d.
			\end{array}
		\right.
	\end{multline*}
	 This application is well-defined provided $e^\Phi \tilde g(\cdot, \cdot, \widehat y, \nabla \widehat y)$ belongs to $L^\infty(0,T;\BUC(\R^d))$ for $ \widehat y \in \mathscr{C}_R$. This is true since, according to condition \eqref{Cond-tilde-g}, we have, for all $ \widehat y \in \mathscr{C}_R$,
	 $$
	 	\| e^\Phi \tilde g(\cdot, \cdot, \widehat y, \nabla \widehat y) \|_{L^\infty_T(L^\infty)}
		\leq
		C \| e^{\Phi} (|\widehat y|^2 + |\nabla \widehat y|^2) \|_{L^\infty_T(L^\infty)} 
		\leq
		C R^2. 
	 $$
	 Using then Theorem \ref{Thm-NC-BUC}, we obtain
	\begin{equation}
		\label{Est-PtFixe-NonLin}
	 	\| y e^{\Phi/2} \|_{L^\infty_T(W^{1,\infty})}
		\leq 
		C \left( 
			\| e^{\Phi} \tilde g(\cdot, \cdot, \widehat y, \nabla \widehat y) \|_{L^\infty_T(L^\infty)} 
			+ 
			\| y_0 \|_{W^{1,\infty}}
		\right)
		\leq 
		C R^2 + C \delta_\alpha.
	\end{equation}
	We also have that for all $ \widehat y_1, \widehat y_2 \in \mathscr{C}_R$, 
	\begin{align*}
		& \| e^\Phi (\tilde g(\cdot, \cdot, \widehat y_1, \nabla \widehat y_1) - \tilde g(\cdot, \cdot, \widehat y_2, \nabla \widehat y_2))\|_{L^\infty_T(L^\infty)}
		\\
		&\leq
		C \| e^{\Phi} (|\widehat y_1 - \widehat y_2| + |\nabla \widehat y_1 - \nabla \widehat y_2|) (|(\widehat y_1, \nabla \widehat y_1)| + (|(\widehat y_1, \nabla \widehat y_1)|) \|_{L^\infty_T(L^\infty)} 
		\\
		&\leq
		C R \| e^{\Phi/2}(\widehat y_1 - \widehat y_2) \|_{L^\infty_T(W^{1,\infty})}.  
	\end{align*}
	Accordingly, for $\delta_\alpha$ small enough, $R= 2 C \delta_\alpha$ is smaller than $\varepsilon$ and satisfies $CR^2 \leq R/2$ (where $C$ is the constant in \eqref{Est-PtFixe-NonLin}), the map $\Lambda_R$ maps $ \mathscr{C}_R$ into itself, and is contractive on $ \mathscr{C}_R$. 

	It follows that $\Lambda_R$ possesses a unique fixed point in the class $\mathscr{C}(R)$, which by construction satisfies \eqref{NC-Heat-Eq-semilinear} for a suitable $h$ satisfying $\mathbf{1}_{\omega}  h \in L^\infty(0,T;\BUC(\omega))$ and the controllability requirement \eqref{Control-Req}.
	
	Now, since $\mathbf{1}_{\omega}  h \in L^\infty(0,T;\BUC(\omega))$, it also belongs to $L^\infty(0,T;X_\alpha)$. The existence result in Theorem \ref{Thm-Semilinear-HE} gives a solution $\tilde y$ of \eqref{NC-Heat-Eq-semilinear} in the class $\mathcal{C}([0,T]; X^1_\alpha)$. This solution $\tilde y$ coincides with the function $y$ constructed above from the uniqueness result in Remark \ref{Rem-Uniqueness}, at least for $\delta_\alpha$ small enough.
\end{proof}

\begin{remark}
	\label{Rem-Control-Semilinear-in-BUC}
	The proof of Theorem \ref{Thm-NC-linear-semilinear} also proves that under the assumptions of Theorem \ref{Thm-NC-linear-semilinear}, there exists a constant $\delta>0$ such that for any initial condition $y_0 \in \BUC^1(\R^d)$ satisfying $\| y_0 \|_{W^{1, \infty}(\R^d)} \leq \delta$, one can find a control function $h$ with $\mathbf{1}_{\omega}  h \in L^\infty(0,T; \BUC(\R^d))$ and a controlled trajectory $y \in \mathcal{C} ([0,T]; \BUC^1(\R^d))$ solution of \eqref{NC-Heat-Eq-semilinear} and satisfying the controllability requirement \eqref{Control-Req}.
\end{remark}

\section{On the reachable sets of the heat equation with various lower order terms}
\label{Sec-Reachable-Heat}
\subsection{Holomorphy of the reachable states}

In this section, we will prove that all the reachable states are holomorphic functions on $\Omega_1$. This follows from the following lemma.
\begin{lemma}
\label{lem:smoothing effects}
Let $\alpha_0 \in (0,1)$, $T>0$, and consider $q$ and $W$ such that 
$$
	q \in L^\infty(0, T; \mathscr{R}_{\alpha_0}), 
	\quad 
	\hbox{ and } 
	\quad 
	W \in  L^\infty(0, T; (\mathscr{R}_{\alpha_0})^d), 
$$
Let $y_0\in L^2(\Omega)$. Let us consider the solution $y$ of
\begin{equation}
	\label{Controlled-Heat_smoothing}
	\left\{
		\begin{array}{ll}
			\partial_t y - \Delta y + q y + W \cdot \nabla y= 0, \quad & \hbox{ in } (0,T) \times \Omega,
			\\ 
			y = u, \quad & \hbox{ on } (0,T) \times \partial\Omega, 
			\\
			y(0, \cdot ) =y_0, \quad & \hbox{ in } \Omega,
		\end{array}
	\right.
\end{equation}
with $u\in L^{2}((0,T)\times\partial\Omega)$. Then $y(T,\cdot)\in \mathrm{Hol}(\Omega_1)$.
\end{lemma}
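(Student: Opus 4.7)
The plan is to show that for every $\alpha > 1$ the function $y(T,\cdot)$ extends holomorphically to $\Omega_\alpha$; since $\Omega_1 = \bigcup_{\alpha>1}\Omega_\alpha$ (indeed, $a + \i b \in \Omega_1$ with $b \neq 0$ lies in $\Omega_\alpha$ for any $\alpha \in (1, (1-|a|)/|b|)$, and the case $b = 0$ is trivial), the identity principle then glues these extensions into a single holomorphic extension on $\Omega_1$. The argument combines interior analyticity for parabolic equations with real-analytic coefficients with a cutoff/Duhamel analysis on $\R^d$ that exploits the $X_\alpha$ framework developed in Section \ref{Sec-Heat-WP}.

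First I would use the standard $L^2$-transposition theory for the parabolic problem with $L^2$ boundary data to place $y$ in $L^2((0,T) \times \Omega)$, and then invoke classical interior analytic regularity (valid since $q, W$ are real-analytic on $\Omega$, being holomorphic on $\Omega_{\alpha_0} \supset \Omega$) to conclude that $y$ is real-analytic in $x$ on $(0,T) \times \Omega$. Pick $t_\star \in (0,T)$: then $y(t_\star,\cdot)$ is real-analytic on $\Omega$ and, in particular, smooth and bounded on any strictly interior compact set. Now fix $\alpha > 1$, choose $\delta > 0$ small, and take a smooth cutoff $\eta$ equal to $1$ on $\overline{B_{\R^d}(1-2\delta)}$ and supported in $B_{\R^d}(1-\delta) \subset \Omega$. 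Setting $\tilde y(t,x) := \eta(x) y(t_\star + t, x)$ (extended by zero outside $\Omega$) and selecting extensions $q_e, W_e \in L^\infty(0,T;X_\alpha)$ of $q, W$ (possible since $\alpha_0 < 1 < \alpha$ implies $\overline{\Omega}_\alpha \subset \Omega_{\alpha_0}$), one checks that $\tilde y$ solves on $(0, T - t_\star) \times \R^d$ a Cauchy problem of the form $\partial_t \tilde y - \Delta \tilde y + q_e \tilde y + W_e \cdot \nabla \tilde y = g$, with source $g$ compactly supported in the annulus $A := \{1-2\delta \leq |x| \leq 1-\delta\}$ and initial datum $\tilde y(0,\cdot) = \eta \cdot y(t_\star,\cdot)$ compactly supported in $\overline{B_{\R^d}(1-\delta)}$.

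The heart of the argument is to show that $\tilde y(T-t_\star,\cdot) \in X_\alpha$, which furnishes the sought holomorphic extension of $y(T,\cdot) = \tilde y(T-t_\star,\cdot)|_{B_{\R^d}(1-2\delta)}$ to $\Omega_\alpha$. Since $\tilde y(0,\cdot)$ and each $g(s,\cdot)$ are compactly supported in $\R^d$ and the heat kernel $G_d(\varepsilon,\cdot)$ is entire in its spatial argument, the Duhamel representation
\begin{equation*}
\tilde y(\varepsilon,\cdot) = \mathbb{T}_\varepsilon \tilde y(0,\cdot) + \int_0^\varepsilon \mathbb{T}_{\varepsilon - s}\bigl[g - q_e \tilde y - W_e \cdot \nabla \tilde y\bigr](s,\cdot)\, ds
\end{equation*}
shows that for any $\varepsilon > 0$ the terms $\mathbb{T}_\varepsilon \tilde y(0,\cdot)$ and $\mathbb{T}_{\varepsilon - s} g(s,\cdot)$ are in fact entire in the complex variable (convolutions of compactly supported integrable functions with the entire heat kernel), hence belong to $X_\alpha$. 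The correction involving $q_e \tilde y$ and $W_e \cdot \nabla \tilde y$ is then absorbed by a short-time Picard iteration in $X_\alpha$, entirely analogous to the fixed-point argument in the proof of Theorem \ref{Thm-WP-linear-LOT}, using that $X_\alpha$ is an algebra and that the heat semigroup is analytic on $X_\alpha$ by Theorem \ref{Thm-Well-Posedness-X-alpha}. Once $\tilde y(\varepsilon,\cdot) \in X_\alpha$ is secured, a direct application of Theorem \ref{Thm-WP-linear-LOT} on $[\varepsilon, T-t_\star]$ propagates this regularity up to $t = T - t_\star$.

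\textbf{Main obstacle.} The principal technical difficulty lies in reconciling the real-variable cutoff $\eta$—smooth but not real-analytic, so that $\eta \cdot y(t_\star,\cdot) \notin X_\alpha$—with the holomorphic framework $X_\alpha$. The resolution rests on the crucial observation that the convolution of any compactly supported $L^\infty$ function with the entire heat kernel is itself entire in the complex variable, so the non-holomorphy introduced by $\eta$ is immediately ``erased'' after an infinitesimal application of the heat semigroup. Controlling the $X_\alpha$ norms uniformly across the Picard iteration as $\alpha \to 1^+$ requires some care, but only qualitative holomorphy on each fixed $\Omega_\alpha$ is needed for the final conclusion, so the quantitative aspects of the fixed-point do not obstruct the proof.
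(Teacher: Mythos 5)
Your overall strategy (cut off, transplant the problem to $\R^d$, and invoke the $X_\alpha$ machinery) is the right one, but the central claim of your argument is false as stated. The space $X_\alpha$ is pinned to the unit ball: $f\in X_\alpha$ requires $f|_{B_{\R^d}(1)}$ to extend holomorphically to $\Omega_\alpha$, hence to be real-analytic on all of $B_{\R^d}(1)$. Your function $\tilde y(T-t_\star,\cdot)=\eta\, y(T,\cdot)$ carries the non-analytic cutoff $\eta$ \emph{inside} the unit ball (on the annulus $1-2\delta<|x|<1-\delta$), so it cannot belong to $X_\alpha$ unless $y(T,\cdot)$ vanishes there. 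The same defect infects the data of your fixed point: the commutator source $g$ is supported in that annulus, so $g(s,\cdot)\notin X_\alpha$ either. Your proposed rescue — that an infinitesimal application of the heat semigroup ``erases'' the non-holomorphy — is only qualitative: for $w$ bounded, compactly supported and not already in $X_\alpha$, one has $\|\T_\tau w\|_{X_\alpha}\sim e^{c/\tau}$ as $\tau\to 0^+$ (the factor $e^{|b|^2/4\tau}$ in the complexified kernel), so the Duhamel integrals $\int_0^t \T_{t-s}[\,g-q_e\tilde y-W_e\cdot\nabla\tilde y\,](s)\,{\rm d}s$ diverge in $X_\alpha$ near $s=t$ and the Picard iteration cannot be closed. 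Indeed it \emph{must} fail: by uniqueness in $\BUC$ its fixed point would have to coincide with $\eta\,y$, which is not in $\mathcal{C}([0,\cdot];X_\alpha)$.

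The paper's proof repairs exactly these two points. First, it works in the rescaled spaces $X_{\alpha,r}$ of \eqref{Def-X-alpha-lambda} with $r$ strictly inside the set where the cutoff equals $1$; then the cut-off solution restricted to $B_{\R^d}(r)$ coincides with the true solution, and the commutator source vanishes on $B_{\R^d}(r)$, hence lies in $L^\infty(0,T;X_{\alpha,r})$ trivially (its ``holomorphic part'' is $0$), so Theorem \ref{Thm-WP-linear-LOT} applies with no blow-up; letting $r\to 1^-$ and $\alpha\to 1^+$ recovers holomorphy on $\Omega_1$. Second, the problem of the initial datum — your $\eta\,y(t_\star,\cdot)$ restricted to $B_{\R^d}(r)$ is real-analytic by interior regularity but is \emph{not} known to extend holomorphically to the whole of $r\Omega_\alpha$, which is essentially what the lemma is trying to prove — is eliminated by a null-controllability reduction (Theorem \ref{Thm-Est-Y-Carl-Norms}): one replaces $y$ by a trajectory $z$ of the same equation with $z(0,\cdot)=0$ and $z(T,\cdot)=y(T,\cdot)$, so that the cut-off problem starts from the zero initial datum, which does belong to $X_{\alpha,r}$. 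Without these two modifications (or genuine substitutes, e.g. a quantitative analytic-smoothing argument with a time-growing domain of holomorphy, which is a substantially different proof), the argument does not go through.
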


\begin{proof}[Proof of Lemma \ref{lem:smoothing effects}]
Let $y$ be a solution of \eqref{Controlled-Heat_smoothing} with $u\in L^{2}((0,T)\times\partial\Omega)$. Since \eqref{Controlled-Heat} is null controllable at any positive time (recall Theorem \ref{Thm-Est-Y-Carl-Norms}, coming from \cite[Theorem 2.6]{BEG}) and is linear, we also have exact controllability to trajectories. Therefore, we can find $z\in\mathcal{C}([0,T],H^{-1}(\Omega))$ such that
\begin{equation}
\label{eq: Chaleur control a zero}
\left\{
			\begin{array}{ll}
				\partial_t z- \Delta  z+q z +W\cdot\nabla z
				 = 0, \quad & \text{ in } (0, T) \times \Omega, 
				\\
				z(t,\cdot) = u(t,\cdot), \quad & \text{ on } (0,T) \times \Omega,\\
         		        z(0,\cdot) = 0, \quad & \text{ in } \Omega,
			\end{array}
		\right.
\end{equation}
satisfies
\begin{equation}
	z(T, \cdot)= y(T, \cdot) \quad \text{ in } \Omega. 
\end{equation}

Note that $z$ is smooth away from $\partial \Omega$ due to the local regularizing effects of the heat equation (the proof can be done along the same lines as the one of Lemma \ref{Thm-Local-Regularity-Result}).

Let $\alpha>1$ and $r \in (0,1)$.  Let us first mention that, by a simple scaling argument, Theorems \ref{Thm-Well-Posedness-X-alpha}-\ref{Thm-Well-Posedness-X-alpha-1}-\ref{Thm-WP-with-source-terms}-\ref{Thm-WP-linear-LOT}-\ref{Thm-Semilinear-HE} and Remarks \ref{Rk-Other-Time-Weights-on-f}-\ref{Rk-A-compact-form}-\ref{Rem-Uniqueness} can be generalized to the case $\Omega=B_{\R^d}(r)$, with the framework based on the space $X_{\alpha,r}$, defined in \eqref{Def-X-alpha-lambda}, instead of $X_{\alpha}$ for $\Omega=B_{\R^d}(1)$.

Now choose $r_1\in(r,1)$ and $\chi\in \mathcal{C}^{\infty}_{c}(B_{\R^d}(r_1))$ such that $\chi=1$ on a neighborhood of $\overline{B_{\R^d}}(r)$. 
Let us set $\tilde z:=\chi z$, $\tilde q:=\chi q$ and $\tilde W:= \chi W$. Since $\alpha>1>\alpha_0$, the functions $\tilde q$ and $\tilde W$ belong to  $L^\infty(0,T;X_{\alpha,r})$. Furthermore, extending $\tilde z$ by $0$ on $\R^d$, $\tilde z$ is a solution of
\begin{equation}
	\label{Controlled-Heat_cutoff}
	\left\{
		\begin{array}{ll}
			\partial_t \tilde z - \Delta \tilde z + \tilde q \tilde z + \tilde W \cdot \nabla \tilde z = f, \quad & \hbox{ in } (0,T) \times \R^d,
			\\
			\tilde z(0, \cdot ) =0, \quad & \hbox{ in } \R^d,
		\end{array}
	\right.
\end{equation}
where
$$
f:=[\chi,\Delta]z - [\chi,\tilde W \cdot \nabla]z-(1-\chi)\left(\tilde q+\Tilde W\cdot\nabla z \right)
$$
satisfies, for almost every $t\in [0, T]$, 
$$
\mathop{Supp\,}(f(t,\cdot))\subset 
\left(B_{\R^d}(r_1)\setminus \overline{B_{\R^d}}(r)\right).
$$
Since the support of $\tilde z$ is contain in $[0,T]\times B_{\R^d}(r)$, it follows from Lemma \ref{Thm-Local-Regularity-Result} that $\tilde z\in \mathcal{C}([0,T];\BUC^1(\R^d))$ and $f\in L^\infty(0,T;\BUC(\R^d))$. Since $\tilde{z}(0,\cdot)=0$, we deduce from Remark \ref{Rem-Uniqueness} that $\tilde{z}$ is the unique solution of \eqref{Controlled-Heat_cutoff} in $\mathcal{C}([0,T];\BUC^1(\R^d))$. Moreover, since $f\in \mathcal{C}([0,T];X_{\alpha,r})$, we deduce that $\tilde{z}$ is the solution of \eqref{Controlled-Heat_cutoff} in $\mathcal{C}([0,T];X_{\alpha,r})$ provided by Theorem \ref{Thm-WP-linear-LOT}. In particular, $y(T,\cdot)_{|B_{\R^d}(r)}=z(T,\cdot)_{|B_{\R^d}(r)} = \tilde z(T,\cdot)_{|B_{\R^d}(r)} $ has a unique holomorphic extension on $r\Omega_\alpha$. Since $r\in (0,1)$ and $\alpha>1$ are arbitrary chosen, we deduce that $y(T,\cdot)\in \mathrm{Hol}(\Omega_1)$.
\end{proof}

\subsection{Proof of Theorem \ref{Thm-Main-Linear}}
\label{Subsec: Proof of the main theorem in the linear case}
Lemma \ref{lem:smoothing effects} immediately implies that $\mathscr{R}_{lin}(y_0,T)\subset \mathrm{Hol}(\Omega_1)$. It thus remains to prove the inclusion 
$$
	\bigcup_{ \alpha \in (0,1)} \mathscr{R}_\alpha \subset \mathscr{R}_{lin}(y_0, T),
$$ 
where the spaces $\mathscr{R}_\alpha$ are defined in \eqref{Def-R-alpha}.

{\bf The case $y_0 = 0$.}
We consider the setting of Theorem \ref{Thm-Main-Linear} and first focus on the case $y_0 = 0$. Following the assumptions and notations of Theorem \ref{Thm-Main-Linear}, we let $\alpha_0 \in (0,1)$, we fix $T>0$, and consider $q$ and $W$ such that 
$$
	q \in L^\infty(0, T; \mathscr{R}_{\alpha_0}), 
	\quad 
	\hbox{ and } 
	\quad 
	W \in  L^\infty(0, T; (\mathscr{R}_{\alpha_0})^d). 
$$

For $\alpha \in (0,1)$, we choose 
$$
	\alpha_1  \in (\max \{ \alpha, \alpha_0\}, 1), 
$$
and we introduce a smooth cut-off function $\eta$ compactly supported on $B_{\R^d}(\alpha_1/\alpha)$ and taking value one in $B_{\R^d} (1)$. For any $y_1 \in \mathscr{R}_\alpha$, the function $\tilde y_1$ given by
\begin{equation}
	\label{Def-tilde-y-1}
	\tilde y_{1}(x) := 
	\left\{ 
		\begin{array} {ll}
			\ds \eta (x) y_1\left( \frac{\i x}{\alpha_1} \right) &\hbox{ for } x \in B_{\R^d} (\alpha_1/\alpha),
			\\
			0 &\hbox{ for } x \in \R^d \setminus B_{\R^d} (\alpha_1/\alpha),
		\end{array}
	\right. 
\end{equation}
belongs to $X_{1/\alpha_1}(\R^d)$, and 
\begin{equation}
\label{eq: estime de tilde y1 a y1}
\|\tilde y_1\|_{X_{1/\alpha_1}}\leq C\|y_1\|_{L^\infty(\Omega_{\alpha_1})}.
\end{equation}
This relies on the fact that $z \in \Omega_{1/\alpha_1}$ is equivalent to $ \i z/\alpha_1 \in \Omega_{\alpha_1}$.

Similarly, setting $T_1 := \alpha_1^2 T$, for $t \in (0, T_1)$, we set 
$$
	\tilde q(t,x) := 
	\left\{ 
		\begin{array} {ll}
			\ds - \alpha_1^2 \eta ( x) q\left(\frac{(T_1-t)}{\alpha_{1}^{2}}, \frac{\i x}{\alpha_1}\right) &\hbox{ for } x \in B_{\R^d} (\alpha_1/\alpha),
			\\
			0 &\hbox{ for } x \in \R^d \setminus B_{\R^d} (\alpha_1/\alpha),
		\end{array}
	\right. 
$$
and
$$
	\tilde W(t,x) := 
	\left\{ 
		\begin{array} {ll}
			\ds \i\alpha_1 \eta ( x) W\left(\frac{(T_1-t)}{\alpha_{1}^{2}} , \frac{\i x}{\alpha_1}\right) &\hbox{ for } x \in B_{\R^d} (\alpha_1/\alpha),
			\\
			0_{\R^d} &\hbox{ for } x \in \R^d \setminus B_{\R^d} (\alpha_1/\alpha),
		\end{array}
	\right. 
$$
for which we have 
$$
	\tilde q \in L^\infty(0, T_1; X_{1/\alpha_1}), 
	\quad 
	\hbox{ and } 
	\quad 
	\tilde W \in  L^\infty(0, T_1; X_{1/\alpha_1}). 
$$
Applying then Theorem \ref{Thm-NC-linear-LOT}, with $\omega = \R^d \setminus \overline B_{\R^d}(2)$, we get the existence of a controlled trajectory $\tilde y$ solution of 
\begin{equation}
	\label{NC-Heat-Eq-LOT-tilde-Last}
		\left\{
			\begin{array}{ll}
				\partial_t \tilde y - \Delta_x \tilde y + \tilde q\tilde y + \tilde W \cdot \nabla_x \tilde y
				 = \mathbf{1}_{\omega} h, \quad & \text{ in } (0, T_1) \times \R^d, 
				\\
				\tilde y(0, \cdot) = \tilde y_{1}, \quad & \text{ in } \R^d,
			\end{array}
		\right.
\end{equation}
which belongs to $\mathcal{C}([0,T_1]; X_{1/\alpha_1})$, and satisfies
\begin{equation}
	\label{Control-Req-T_1}
	\tilde y(T_1, \cdot) = 0 \hbox{ in } \R^d.
\end{equation}

We then set, for $t \in (0,T)$, and $x \in \Omega_{\alpha_1}$,
\begin{equation}
	\label{Def-y-controlled}
	y(t,x ) := \tilde y\left(T_1 -\alpha_1^2t, -\alpha_1 \i x \right). 
\end{equation}
Since $\tilde y$ belongs to $\mathcal{C}([0,T_1]; X_{1/\alpha_1})$, the equation \eqref{NC-Heat-Eq-LOT-tilde-Last} holds for $t \in [0,T_1]$ and $ x \in \Omega_{1/\alpha_1}$. Easy computations then show that $y$ satisfies
\begin{equation}
	\label{NC-Heat-Eq-LOT-Last-y}
		\left\{
			\begin{array}{ll}
				\partial_t y - \Delta_x y + q  y +  W \cdot \nabla_x y
				 = 0, \quad & \text{ in } (0,T) \times \Omega, 
				\\
				y(0, \cdot) = 0, \quad & \text{ in } \Omega,
				\\
				y(T, \cdot) = y_1 \quad & \text{ in } \Omega,
			\end{array}
		\right.
\end{equation}
and $y \in \mathcal{C}([0,T]; \mathscr{R}_{\alpha_1})$. 

Since $h$ in  \eqref{NC-Heat-Eq-LOT-tilde-Last} satisfies $\|\mathbf{1}_{\omega}  h \|_{L^{\infty}_{T_1}(X_{1/\alpha_1})} \leq C \| \tilde y_{1} \|_{X_{1/\alpha_1}} $ from Theorem \ref{Thm-NC-linear-LOT} and $\| \tilde y_{1} \|_{X_{1/\alpha_1}} \leq C \| y_1 \|_{L^{\infty}(\Omega_\alpha)}$, we obviously have $\| y \|_{L^{\infty}_{T}(L^{\infty}(\Omega_{\alpha_1}))} \leq C \| y_1\|_{L^{\infty}(\Omega_\alpha)}$. The control $u$ in \eqref{Controlled-Heat} is then simply given by the trace of $y$ at $x \in \S^{d-1}$ and thus immediately satisfies $\| u \|_{L^{\infty}_{T}( L^\infty(\partial\Omega))} \leq C \| y_1\|_{L^{\infty}(\Omega_\alpha)}$ as claimed in Theorem \ref{Thm-Main-Linear}.

{\bf The case $y_0 \neq 0$.} We simply remark that if $y_0 \in L^2(\Omega)$, the heat equation \eqref{Controlled-Heat} being null-controllable in any positive time by classical control results (see for instance \cite{Cabanillas-DeMenezes-Zuazua} in the presence of potentials), one can construct a control $u \in L^2(0,T/2; L^2(\partial\Omega))$ such that the solution $y$ of \eqref{Controlled-Heat} on $(0,T/2)$ satisfies $y(T/2,\cdot) = 0$ in $\Omega$. We then use the previous case on the time interval $(T/2,T)$ to show that, for any $\alpha \in (0,1)$, we can reach any state in $\mathscr{R}_\alpha$ for $\alpha \in (0,1)$ at time $T$.

\subsection{Proof of Theorem \ref{Thm-Main-Semilinear}: On the reachable sets of the semilinear heat equation}

The approach is the same as before. 

We consider a semilinearity $g$ as in Theorem \ref{Thm-Main-Semilinear} with $\alpha_0<1$.

\textbf{The case $y_0=0$.} We follow the same path than in Section \ref{Subsec: Proof of the main theorem in the linear case}. For $\alpha \in (0,1)$, we choose $\alpha_1  \in (\max \{ \alpha, \alpha_0\}, 1)$, 
and we introduce a smooth cut-off function $\eta$ compactly supported on $B_{\R^d}(\alpha_1/\alpha)$ and taking value one in $B_{\R^d} (1)$. Then, for $y_1 \in \mathscr{R}_\alpha$ with $\nabla y_1 \in \mathscr{R}_\alpha$, we introduce the function $\tilde y_1$ as in \eqref{Def-tilde-y-1}. It is easy to check, again based on the identity $\Omega_{\alpha_1}=\i/\alpha_1\Omega_{1/\alpha_{1}}$, that $\tilde y_1$ belongs to $X_{1/\alpha_1}(\R^d)$. 

Setting $T_1 := \alpha_1^2 T$, for $t \in (0, T_1)$, we define a function
$\tilde g:[0,T]\times (\R^d\cup \overline{\Omega}_{1/\alpha_1})\times\overline{B}_\C(\varepsilon)\times \overline{B}_{\C^d}(\alpha_1 \varepsilon)\rightarrow \C$ by 
$$
	\tilde g(t,x,s,s_d) := 
	\left\{ 
		\begin{array} {ll}
			\ds - \alpha_1^2 \eta ( x) g\left(\frac{(T_1-t)}{\alpha_{1}^{2}},\frac{\i x}{\alpha_1},s,-\frac{\i s_d}{\alpha_1} \right) &\hbox{ for } x \in B_{\R^d} (\alpha_1/\alpha),
			\\
			0 &\hbox{ for } x \in \R^d \setminus B_{\R^d} (\alpha_1/\alpha),
		\end{array}
	\right. 
$$
for which we have the hypotheses of Theorem \ref{Thm-NC-linear-semilinear}.

Then, by choosing $\delta_\alpha>0$ small enough so that Theorem \ref{Thm-NC-linear-semilinear} apply with initial data $\tilde y_1$ thanks \eqref{eq: estime de tilde y1 a y1}, we deduce that there exists a controlled trajectory $\tilde y$ solution of 
\begin{equation}
	\label{NC-Heat-Eq-LOT-tilde-Last-nonlinear}
		\left\{
			\begin{array}{ll}
				\partial_t \tilde y - \Delta \tilde y + \tilde g(\tilde y, \nabla \tilde y)
				 = \mathbf{1}_{\R^d\setminus \overline{B}_{\R^d}(1)} h, \quad & \text{ in } (0, T_1) \times \R^d, 
				\\
				\tilde y(0, \cdot) = \tilde y_{1}, \quad & \text{ in } \R^d,
			\end{array}
		\right.
\end{equation}
which belongs to $\mathcal{C}([0,T_1]; X_{1/\alpha_1})$, and satisfies \eqref{Control-Req-T_1}.

We then set $y$ as in \eqref{Def-y-controlled} for $t \in (0,T)$, and $x \in \Omega_{\alpha_1}$. 
Since $\tilde y$ belongs to $\mathcal{C}([0,T_1]; X_{1/\alpha_1})$, the equation \eqref{NC-Heat-Eq-LOT-tilde-Last-nonlinear} holds on $[0,T_1]\times \Omega_{1/\alpha_1}$, and we recover that $y$ solves 
\begin{equation}
	\label{NC-Heat-Eq-LOT-Last-y-nonlinear}
		\left\{
			\begin{array}{ll}
				\partial_t y - \Delta y + g(y, \nabla y)
				 = 0, \quad & \text{ in } (0,T) \times \Omega, 
				\\
				y(0, \cdot) = 0, \quad & \text{ in } \Omega,
				\\
				y(T, \cdot) = y_1 \quad & \text{ in } \Omega,
			\end{array}
		\right.
\end{equation}
where $y \in \mathcal{C}([0,T]; \mathscr{R}_{\alpha_1})$ with $\nabla y \in  \mathcal{C}([0,T]; \mathscr{R}_{\alpha_1})$ and the control $u$ is simply given by the restriction of $y$ to $(0,T)\times\partial\Omega$. 

{\bf The case $y_0 \neq 0$.} When $y_0 \in \mathcal{C}^1(\overline\Omega)$, we can simply extend $y_0$ as a $\mathcal{C}^1$ function to a neighborhood of $\overline\Omega$ which is compactly supported. We can then use Remark \ref{Rem-Control-Semilinear-in-BUC} to get the existence of a control function $h$ supported in $(0,T/2) \times (\R^d \setminus B) $ such that the solution $y$ of \eqref{NC-Heat-Eq-semilinear} vanishes at time $T/2$ provided the initial datum $y_0$ is small enough in $\mathcal{C}^1$. By restriction to $\Omega$, we obtain a control function $u \in L^\infty(0,T/2;W^{1,\infty}(\partial\Omega))$ such that the solution $y$ of \eqref{Controlled-Heat-Non-linear} vanishes at time $T/2$. We can then combine it with the previous case considered in the time interval $(T/2, T)$.

\end{document}